\newtheorem{theorem}{Theorem}[section]
\newtheorem{lemma}[theorem]{Lemma}
\newtheorem{fact}[theorem]{Fact}
\newtheorem{corollary}[theorem]{Corollary}
\newtheorem{conjecture}[theorem]{Conjecture}
\theoremstyle{definition}
\newtheorem*{definition*}{Definition}
\newtheorem{question}{Question}
\newcommand{\cC}{\mathcal{C}}
\newcommand{\cA}{\mathcal{A}}
\newcommand{\cE}{\mathcal{E}}
\newcommand{\cF}{\mathcal{F}}
\newcommand{\cS}{\mathcal{S}}
\newcommand{\cQ}{\mathcal{Q}}
\newcommand{\bE}{\mathbb{E}}
\newcommand{\bR}{\mathbb{R}}
\newcommand{\bD}{\mathbf{D}}
\newcommand{\bd}{\mathbf{d}}
\newcommand{\bX}{\mathbf{X}}
\newcommand{\sm}{\setminus}
    \let\@fnsymbol\@arabic
\newcommand{\ind}{\mathbbm{1}}
\newcommand{\eps}{\varepsilon}
\newcommand{\bP}{\mathbb{P}}
\newcommand{\cD}{\mathcal{D}}
\newcommand{\bN}{\mathbb{N}}
\newtheoremstyle{case}{}{}{}{}{}{:}{ }{}
\theoremstyle{case}
\newcommand{\dnp}{$D(n,p)$}
\newcommand{\Var}{\mathrm{Var}}
\newcommand{\Bin}{\text{Bin}}
\author{Matthew Coulson\thanks{School of Mathematics, University of Birmingham, UK. Email: mjc685@bham.ac.uk.}}
\title{The critical window in random digraphs}
\begin{document}

\maketitle

\begin{abstract}
We consider the component structure of the random digraph $D(n,p)$ inside the critical window $p = n^{-1} + \lambda n^{-4/3}$.
We show that the largest component $\cC_1$ has size of order $n^{1/3}$ in this range.
In particular we give explicit bounds on the tail probabilities of $|\cC_1|n^{-1/3}$.
\end{abstract}

\section{Introduction}
Consider the random digraph model $D(n,p)$ where each of the  $n(n-1)$ possible edges is included with probability $p$ independently of all others.
This is analogous to the Erd\H{o}s-Renyi random graph $G(n,p)$ in which each edge is again present with probability $p$ independently of all others.
McDiarmid~\cite{mcdiarmid1980clutter} showed that due to the similarity of the two models, it is often possible to couple $G(n,p)$ and $D(n,p)$ to compare the probabilities of certain properties.

In the random graph $G(n,p)$ the component structure is well understood. In their seminal paper~\cite{erdos1960evolution}, Erd\H{o}s and R\'enyi proved that for $p = c/n$ the largest component of $G(n,p)$ has size $O(\log(n))$ if $c<1$, is of order $\Theta(n^{2/3})$ if $c=1$, and has linear size when $c>1$. This threshold behaviour is known as the double jump.
If we zoom in further around the critical point, $p=1/n$ and consider $p = (1+\eps(n))/n$ such that $\eps(n) \to 0$ and $|\eps(n)|^3 n \to \infty$, Bollob\'as~\cite{bollobas1984evolution} proved the following theorem for $|\eps|>(2\log(n))^{1/2} n^{-1/3}$,which was extended to the whole range described above by \L{}uczak~\cite{luczak1990component}.
\begin{theorem}[\cite{bollobas1984evolution, luczak1990component}]
\label{thm:pluseps}
 Let $np=1+\eps$, such that $\eps = \eps(n) \to 0$ but $n|\eps|^3 \to \infty$, and $k_0 = 2 \eps^{-2} \log(n|\eps|^3)$.
 \begin{enumerate}[i)]
  \item If $n \eps^3 \to -\infty$ then a.a.s. $G(n,p)$ contains no component of size greater than $k_0$.
  \item If $n \eps^3 \to \infty$ then a.a.s. $G(n,p)$ contains a unique component of size greater than $k_0$. This component has size $2\eps n(1+o(1))$.
 \end{enumerate}
\end{theorem}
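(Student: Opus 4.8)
The plan is to study the component $\cC(v)$ of a fixed vertex $v$ through the breadth-first exploration of $G(n,p)$, which couples $|\cC(v)|$ between the total progenies of Galton--Watson trees with offspring $\Bin(n,p)$ and $\Bin(n-k_0,p)$; both have mean $1+\eps(1+o(1))$ and explicit (Borel--Tanner) progeny laws, so one has sharp tail bounds $\bP[|\cC(v)|=s]\le C s^{-3/2}e^{-s\eps^2(1+o(1))/2}$ for $s=o(n^{2/3})$ — either from that formula together with Stirling, or from a ballot/cycle-lemma estimate for the exploration walk, which must stay positive for $s$ steps. Write $N_{>k}=\#\{v:|\cC(v)|>k\}$.

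For part (i), $\eps<0$ and the dominating process is subcritical. Since $k_0\eps^2/2=\log(n|\eps|^3)$, summing the tail bound over $s>k_0$ gives $\bE[\#\{\text{components of size}>k_0\}]=O\bigl(n k_0^{-3/2}(n|\eps|^3)^{-1}\bigr)=O\bigl((\log(n|\eps|^3))^{-3/2}\bigr)=o(1)$, and Markov's inequality finishes. The polynomial factor $s^{-3/2}$ is essential here: the bare Chernoff bound $\bP[|\cC(v)|\ge k_0]\le e^{-k_0\eps^2/2}$ only gives $\bE[N_{>k_0}]/k_0=O(1/(|\eps|\log(n|\eps|^3)))$, which need not tend to $0$.

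For part (ii), $\eps>0$ and the process is supercritical, with survival probability $\rho=\rho(1+\eps)=2\eps(1+o(1))$. One first shows $\bP[|\cC(v)|>k_0]=(1+o(1))\rho$ — the upper bound by branching-process domination, the lower bound by noting that once the walk has climbed high enough above the origin it seldom returns within $o(n)$ steps, so surviving $O(\eps^{-2}\log(n\eps^3))$ generations already forces size $>k_0$ — whence $\bE[N_{>k_0}]=2\eps n(1+o(1))$. For concentration I would apply the second-moment method to $n-N_{>k_0}$ rather than to $N_{>k_0}$ directly: the diagonal term $\bE[\sum_{|C|\le k_0}|C|^2]=n\bE[|\cC(v)|\,\ind_{|\cC(v)|\le k_0}]=O(n/\eps)$ is $o(\eps^2 n^2)$ \emph{precisely because} $n\eps^3\to\infty$, and the off-diagonal terms factorise up to $1+o(1)$ (given $\cC(v)$ small, the remaining graph is essentially $G(n-o(n),p)$, and two disjoint small vertex sets span no edge with probability $1-o(1)$ on the dominant events); so $N_{>k_0}=2\eps n(1+o(1))$ a.a.s.

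Uniqueness I would obtain by combining (a) a first-moment ``gap'' estimate — a careful count of components by size $s\in(k_0,\delta\eps n)$ and excess (trees contribute $\asymp ns^{-5/2}e^{-s\eps^2/2}$, unicyclic $\asymp s^{-1}e^{-s\eps^2/2}$, each further excess edge a factor $O(s^{3/2}/n)$) shows a.a.s.\ no component has size in this range — with (b) sprinkling: reveal $G(n,p_1)$ with $\eps_1=\eps(1+o(1))$ still inside the window, so by (a) its $2\eps n(1+o(1))$ vertices in components of size $>k_0$ actually lie in $O(1/\eps)$ components of size $\ge\delta\eps n$, and then the remaining $p_2$-edges — with $p_2=p\cdot o(1)$ but $\eps^2 n^2 p_2\to\infty$, possible exactly because $n\eps^3\to\infty$ — join any two of these with probability $1-e^{-\Omega(\eps^2 n^2 p_2)}$, and a union bound over the $O(\eps^{-2})$ pairs completes the merge; hence $G(n,p)$ has a unique component of size $>k_0$, of size $N_{>k_0}=2\eps n(1+o(1))$. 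The main obstacle I anticipate is exactly this uniqueness argument in the full range $n|\eps|^3\to\infty$: when $n\eps^4\to0$ there is essentially no slack to sprinkle and the ``medium'' components are not killed by a naive union bound, so the enumeration in (a) must be pushed with Wright-type asymptotics and error terms that are $o(1)$ uniformly in $\eps$ — this is where the original arguments of Bollob\'as and \L{}uczak become technical. Everything else is careful bookkeeping in the branching-process approximation — in particular controlling the depletion ($\Bin(n,p)$ versus $\Bin(n-k_0,p)$) up to sizes $\asymp k_0=o(n)$ — together with routine local-limit and Stirling estimates for the component-size distribution.
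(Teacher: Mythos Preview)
The paper does not prove Theorem~\ref{thm:pluseps} at all: it is quoted in the introduction as a background result from Bollob\'as~\cite{bollobas1984evolution} and \L{}uczak~\cite{luczak1990component} about $G(n,p)$, and the body of the paper is entirely about $D(n,p)$. So there is no ``paper's own proof'' to compare your proposal against.

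That said, your sketch is a reasonable outline of the standard modern proof of this classical theorem. The ingredients --- branching-process sandwich for the exploration, the $s^{-3/2}e^{-s\eps^2/2}$ tail for component sizes, first moment for~(i), second moment on the complement for the size of the giant in~(ii), a first-moment gap plus sprinkling for uniqueness --- are exactly what one finds in, e.g., the treatments in Janson--\L{}uczak--Ruci\'nski or Bollob\'as--Riordan. Your own diagnosis of the delicate point is accurate: when $n\eps^3\to\infty$ but $n\eps^4\to 0$ the sprinkling budget is thin and the ``no medium components'' step needs the full Wright-type enumeration with uniform error terms; this is precisely where \L{}uczak's refinement over Bollob\'as was needed. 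One small quibble: in your part~(i) calculation you should be explicit that you are counting \emph{components} (dividing by $s$) rather than vertices in large components --- the arithmetic you wrote, $O(nk_0^{-3/2}(n|\eps|^3)^{-1})=O((\log(n|\eps|^3))^{-3/2})$, comes out correctly, but the intermediate expression is ambiguous as written.
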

Within the critical window itself i.e. $p = n^{-1} + \lambda n^{-4/3}$ with $\lambda \in \bR$, the size of the largest component $\cC_1$ is not tightly concentrated as it is for larger $p$.
Instead, there exists a random variable $X_1 = X_1(\lambda)$ such that $|\cC_1|n^{-2/3} \to X_1$ as $n \to \infty$.
Much is known about the distribution of $X_1$, in fact the vector $\bX = (X_1,\ldots, X_k)$ of normalised sizes of the largest $k$ components i.e. $X_i = |\cC_i| n^{-2/3}$ converges to the vector of longest excursion lengths of an inhomogeneous reflected Brownian motion by a result of Aldous~\cite{aldous1997brownian}.
In a more quantitative setting where one is more interested about behaviour for somewhat small $n$, Nachmias and Peres~\cite{nachmias2010critical} proved the following (similar results may be found in~\cite{pittel2001largest,scott2006solving}).
\begin{theorem}[\cite{nachmias2010critical}]
 Suppose $0<\delta<1/10$, $A>8$ and $n$ is sufficiently large with respect to $A, \delta$. Then if $\cC_1$ is the largest component of $G(n,1/n)$, we have
 \begin{enumerate}[i)]
  \item $\bP(|\cC_1| < \lfloor \delta n^{2/3} \rfloor) \leq 15 \delta^{3/5}$
  \item $\bP(|\cC_1|>An^{2/3}) \leq \frac{4}{A} e^{-\frac{A^2(A-4)}{32}}$
 \end{enumerate}
\end{theorem}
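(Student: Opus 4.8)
The plan is to analyse the component $C(v)$ of a fixed vertex $v$ via its breadth-first exploration, obtain sharp two-sided tail bounds for $|C(v)|$, and then pass to $|\cC_1|$ by a first-moment bound (for the upper tail) and a second-moment bound (for the lower tail). In the exploration, let $A_t$ be the number of active vertices after $t$ steps, so $A_0=1$ and $A_t=A_{t-1}-1+\xi_t$, where conditionally on the history $\cF_{t-1}$ one has $\xi_t\sim\Bin(N_{t-1},1/n)$ with $N_{t-1}=n-(t-1)-A_{t-1}$ the number of still-undiscovered vertices; the exploration stops at $\tau:=\min\{t:A_t=0\}$ and $|C(v)|=\tau$, so that $|C(v)|\ge k$ exactly when $A_t\ge1$ for all $t<k$. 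Since $n-2k\le N_{t-1}\le n-t$ as long as fewer than $2k$ vertices have been discovered, $A_t$ is sandwiched between two walks started at $1$ with independent binomial increments of mean $1-O(t/n)$: essentially critical walks with a mild negative drift.

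The core estimate, to be proved for $k\le c_0n^{2/3}$ with explicit constants, is
\[
\frac{c_1}{\sqrt k}\ \le\ \bP\big(|C(v)|\ge k\big)\ \le\ \frac{c_2}{\sqrt k}\,\exp\!\Big(-\frac{c_3k^3}{n^2}\Big).
\]
The exponential factor in the upper bound is a Chernoff estimate for the event $\sum_{i<k}\xi_i\ge k-1$ forced by $A_{k-1}\ge1$; the cumulative drift of order $k^2/n$ only dominates the $\sqrt k$-scale fluctuations once $k\gg n^{2/3}$, and keeping the lower-order terms in the Chernoff exponent is what turns a bare $A^3$ into the asserted $A^2(A-4)$. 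The polynomial factor $1/\sqrt k$ is the delicate point: for the upper bound I would use that $A_t$ is a supermartingale, so optional stopping at $\tau\wedge k$ gives $\bE[A_k\ind_{\tau>k}]\le1$, together with the fact that a near-critical walk conditioned to survive $k$ steps sits at height $\gtrsim\sqrt k$ at time $k$ (obtained via a quadratic martingale controlling $\bE[A_k^2\ind_{\tau>k}]$ and Cauchy--Schwarz); multiplying, $\bP(\tau>k)\le c_2/\sqrt k$. The matching lower bound comes from a local-limit / cycle-lemma estimate for the sandwiching walk, intersected with the very likely event that fewer than $2k$ vertices are discovered in the first $k$ steps, so that the sandwich genuinely applies.

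Part (ii) now follows from a first-moment bound: if $|\cC_1|>An^{2/3}$ then more than $An^{2/3}$ vertices lie in components of size exceeding $An^{2/3}$, so by Markov's inequality and the upper estimate above,
\[
\bP\big(|\cC_1|>An^{2/3}\big)\ \le\ \frac{n\,\bP\big(|C(v)|>An^{2/3}\big)}{An^{2/3}}\ \le\ \frac{n}{An^{2/3}}\cdot\frac{c_2}{\sqrt{An^{2/3}}}\,e^{-c_3A^3}\ =\ \frac{c_2}{A^{3/2}}\,e^{-c_3A^3},
\]
which with the careful constants yields the stated $\tfrac4A e^{-A^2(A-4)/32}$. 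For part (i) I would run the second-moment method on $N_{\ge k}:=\#\{v:|C(v)|\ge k\}$ with $k=\lfloor\delta n^{2/3}\rfloor$. Since $\{|\cC_1|<k\}\subseteq\{N_{\ge k}=0\}$, Chebyshev gives $\bP(|\cC_1|<k)\le\Var(N_{\ge k})/(\bE N_{\ge k})^2$, and the lower estimate above gives $\bE N_{\ge k}=n\,\bP(|C(v)|\ge k)\ge c_1n^{2/3}/\sqrt\delta$. For the variance, conditioning on $C(u)$ and using that deleting $C(u)$ can only shrink the components of the remaining vertices, one checks that the cross-terms over pairs in distinct components contribute at most $(\bE N_{\ge k})^2$, so that
\[
\Var(N_{\ge k})\ \le\ \bE\Big[\sum_{C:\,|C|\ge k}|C|^2\Big]\ \le\ \bE\Big[\sum_v|C(v)|\Big]\ =\ n\,\bE|C(v)|\ =\ O\big(n^{4/3}\big),
\]
the last equality because $\bE|C(v)|=\sum_{k\ge1}\bP(|C(v)|\ge k)=O(n^{1/3})$ by the upper estimate. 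Hence $\bP(|\cC_1|<k)=O(\delta)$, and careful bookkeeping of the constants (the only genuinely lossy place, which is why $n$ must be large relative to $\delta$) yields a bound of the form $15\delta^{3/5}$.

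The step I expect to be the main obstacle is the honest proof of the $1/\sqrt k$ prefactor in $\bP(|C(v)|\ge k)$: for the upper bound one must pair optional stopping with a genuine $\sqrt k$-lower bound on the height of the surviving walk, and for the lower bound one must first confine the exploration to its sandwiching walk and then run a local-limit estimate through. Everything downstream — the first- and second-moment passages to $\cC_1$, and the constant-chasing needed to land exactly on $\tfrac4A e^{-A^2(A-4)/32}$ and $15\delta^{3/5}$ — is comparatively routine.
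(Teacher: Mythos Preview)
The paper does not prove this theorem: it is quoted verbatim from Nachmias and Peres~\cite{nachmias2010critical} in the introduction as motivation for the directed analogues (Theorems~\ref{mainlb} and~\ref{mainub}), and no proof is supplied anywhere in the text. So there is no ``paper's own proof'' to compare against.

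That said, your outline is essentially the Nachmias--Peres argument itself: explore $C(v)$ as a near-critical random walk, prove the two-sided estimate $\bP(|C(v)|\ge k)\asymp k^{-1/2}$ (with the Gaussian correction $e^{-\Theta(k^3/n^2)}$ on the upper side), then pass to $\cC_1$ by a first-moment bound for~(ii) and a second-moment bound on $N_{\ge k}$ for~(i). The monotonicity step you use to control the off-diagonal covariance (deleting $C(u)$ only shrinks the remaining components) is exactly the device they employ, and your identification of the $1/\sqrt{k}$ prefactor via optional stopping paired with a height lower bound as the genuinely delicate point is accurate. One small remark: your variance computation gives $\bP(|\cC_1|<\delta n^{2/3})=O(\delta)$, which is formally \emph{stronger} than the stated $15\delta^{3/5}$; the fractional exponent in Nachmias--Peres arises from a somewhat cruder bookkeeping of the constants in their survival-probability lower bound, not from any essential feature of the method, so landing exactly on $\delta^{3/5}$ would require following their specific estimates rather than the cleaner route you sketch.
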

Note we have only stated the version of their theorem with $p=n^{-1}$ for clarity but it holds for the whole critical window.
Of course, there are a vast number of other interesting properties of $\cC_1$, see~\cite{addario2012continuum, janson1993birth, luczak1994structure} for a number of examples.

In the setting of $D(n,p)$, one finds that analogues of many of the above theorems still hold.
When working with digraphs, we are interested in the strongly connected components which we will often call the components.
Note that the weak component structure of $D(n,p)$ is precisely the component structure of $G(n,2p-p^2)$.
For $p = c/n$, Karp~\cite{karptc} and \L{}uckzak~\cite{luczak1990phase} independently showed that for $c<1$ all components are of size $O(1)$ and when $c>1$ there is a unique complex component of linear order and every other component is of size $O(1)$ (a component is complex if it has more edges than vertices).
The range $p = (1+\eps)/n$ was studied by \L{}uczak and Seierstad~\cite{luczak2009critical} who were able to show the following result which can be viewed as a version of Theorem~\ref{thm:pluseps} for $D(n,p)$,
\begin{theorem}[\cite{luczak2009critical}]
 Let $np=1+\eps$, such that $\eps = \eps(n) \to 0$.
 \begin{enumerate}[i)]
  \item If $n \eps^3 \to -\infty$ then a.a.s. every component of $D(n,p)$ is an isolated vertex or a cycle of length $O(1/|\eps|)$.
  \item If $n \eps^3 \to \infty$ then a.a.s. $D(n,p)$ contains a unique complex component of size $4 \eps^2 n (1+o(1))$ and every other component is an isolated vertex or a cycle of length $O(1/\eps)$.
 \end{enumerate}
\end{theorem}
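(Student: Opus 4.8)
The plan is to reduce everything to near-critical Galton--Watson exploration, run in both the forward and backward directions. For a vertex $v$ let $\text{Out}(v)$ (resp.\ $\text{In}(v)$) be the set of vertices reachable from $v$ (resp.\ from which $v$ is reachable) along directed edges; both sets are closed --- no edge leaves $\text{Out}(v)$ and no edge enters $\text{In}(v)$ --- and the strongly connected component of $v$ is $\cC(v)=\text{Out}(v)\cap\text{In}(v)$. One reveals $\text{Out}(v)$ by breadth-first search along out-edges, in which each newly reached vertex contributes an independent $\Bin(n-(\text{number seen}),p)$ set of new out-neighbours, so while $o(n)$ vertices have been seen the exploration is sandwiched between Galton--Watson processes with $\Bin((1-o(1))n,p)$ and $\Bin(n,p)$ offspring, both within $o(1)$ of $\text{Po}(1+\eps)$ in total variation; the same holds for $\text{In}(v)$ along in-edges. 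Since $\dnp$ includes the ordered pairs $(a,b)$ and $(b,a)$ independently, the forward exploration from $v$ and the backward exploration from $w$ reveal disjoint edge-slots except for pairs $(a,b)$ with $a\in\text{Out}(v)$, $b\in\text{In}(w)$; running the two explorations one after the other (the second on the leftover vertices) makes precise that they are independent up to an error governed by the overlap. The final ingredient is the structural dichotomy: a strongly connected digraph on $s\ge2$ vertices has at least $s$ edges, with equality iff it is a single directed cycle, and is \emph{complex} (strictly more edges than vertices) otherwise; by the ear decomposition a strongly connected digraph on $[s]$ with exactly $s+1$ edges is a directed cycle with one directed ear, so there are only $O(s^2\cdot s!)$ of them.

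\emph{Part (i): $np=1+\eps<1$.} The exploration is dominated by a subcritical branching process, so a Chernoff bound gives $\max_v\{|\text{Out}(v)|,|\text{In}(v)|\}=O(|\eps|^{-2}\log n)$ w.h.p.\ and in particular no component is large; to get the exact description it suffices, by the dichotomy, to rule out (a) long directed cycles and (b) complex components. For (a), the expected number of directed cycles of length at least $\omega/|\eps|$ is at most $\sum_{k\ge\omega/|\eps|}(np)^k/k\le\sum_{k\ge\omega/|\eps|}(1+\eps)^k/k=O(e^{-\omega}/\omega)$, which is $o(1)$ for any $\omega\to\infty$. For (b), every complex component contains a strongly connected subgraph with one more edge than vertex, and the expected number of these is at most $\sum_s\binom{n}{s}\cdot O(s^2s!)\cdot p^{s+1}=O(n^{-1})\sum_s s^2(1+\eps)^s=O\big(1/(n|\eps|^3)\big)=o(1)$ by hypothesis. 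Hence w.h.p.\ every component is an isolated vertex or a directed cycle of length $O(1/|\eps|)$ (and no two cycles meet).

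\emph{Part (ii): $np=1+\eps>1$.} Now the exploration is supercritical with survival probability $\rho=\rho(\eps)=2\eps+O(\eps^2)$. Fix a threshold $T$ with $|\eps|^{-2}\log n\ll T\ll\eps n$ and call $v$ \emph{out-heavy} if $|\text{Out}(v)|\ge T$ and \emph{in-heavy} if $|\text{In}(v)|\ge T$. By the near-critical branching-process analysis underlying Theorem~\ref{thm:pluseps}, w.h.p.\ no forward or backward exploration has size strictly between $\Theta(|\eps|^{-2}\log n)$ and $\Theta(\eps n)$, so being out-heavy is w.h.p.\ equivalent to $|\text{Out}(v)|=\Theta(\eps n)$, and $\bP(v\text{ out-heavy})=\rho(1+o(1))$; likewise for in-heavy. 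By the forward/backward independence these two events are asymptotically independent, so with $V_{++}$ the set of vertices that are both out-heavy and in-heavy one gets $\bE|V_{++}|=\rho^2 n(1+o(1))=4\eps^2 n(1+o(1))$, and a second-moment computation (using the near-independence of the four explorations involved) gives $|V_{++}|=4\eps^2 n(1+o(1))$ w.h.p. Since two vertices in a common component have equal $\text{Out}$ and $\text{In}$ sets, the giant complex component is contained in $V_{++}$; conversely, a sprinkling argument ($\dnp=D(n,p_1)\cup D(n,p_2)$ with $np_1=1+\eps/2$, $p_2\approx\eps/2n$) should show w.h.p.\ that all out-heavy vertices reach, and all in-heavy vertices are reached from, a common core of size $\Omega(\eps n)$, so that $V_{++}$ lies in a single component, which (being far larger than any short cycle) is complex.

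It remains to classify the other components, as in part (i) but with the first moments cut off below the giant: the expected number of directed cycles of length $k\ge\omega/\eps$ whose forward exploration dies (probability $\le(1-\rho)^k\le e^{-2\eps k(1+o(1))}$) is at most $\sum_{k\ge\omega/\eps}k^{-1}(1+\eps)^k e^{-2\eps k(1+o(1))}=O(e^{-\omega/2}/\omega)=o(1)$, and the expected number of strongly connected subgraphs on $s\le1/\eps$ vertices with $s+1$ edges is $O(n^{-1})\sum_{s\le1/\eps}s^2(1+\eps)^s=O(1/(n\eps^3))=o(1)$; a longer cycle or larger complex subgraph has, w.h.p.\ (by the analogous first moments with the killing factor), both forward and backward explorations large, hence lies inside the unique giant by the merging argument. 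This gives (ii). The main obstacle is exactly this supercritical structural analysis: pinning $|V_{++}|$ to within a $1+o(1)$ factor needs the ``gap'' and the constant $\rho=2\eps+O(\eps^2)$ uniformly over the range, but the real difficulty is the merging step, since a naive sprinkling union bound over the $\Theta(n^2)$ pairs costs $e^{-\Theta(\eps^3 n)}$ per pair and is not summable when $\eps^3 n\to\infty$ slowly --- one must instead show that the large forward (resp.\ backward) explorations all share a common core of size $\Omega(\eps n)$ and link through that, which is the technical heart of the proof.
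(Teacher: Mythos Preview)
This theorem is not proved in the paper at all: it is quoted from \L{}uczak and Seierstad~\cite{luczak2009critical} as background, and the paper supplies no argument for it. There is therefore no ``paper's own proof'' to compare your proposal against.

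On its own terms, your outline follows the standard strategy for such results (forward/backward explorations dominated by near-critical branching processes, first-moment bounds on long cycles and excess-one subgraphs, a heavy/heavy intersection for the giant). Part~(i) is essentially complete as a sketch. Part~(ii) is candid about where the real work lies: you yourself flag that the merging of all out-heavy and in-heavy vertices into a single component is the technical crux, and that a naive sprinkling union bound does not close. That is accurate; the proof in~\cite{luczak2009critical} handles this via a more careful structural argument, and your sketch stops short of it. So as a proposal it correctly identifies both the architecture and the hard step, but does not resolve the latter.
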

As a corollary \L{}uczak and Seierstad obtain a number of weaker results inside the critical window regarding complex components.
They showed that there are $O_p(1)$ complex components containing $O_p(n^{1/3})$ vertices combined and that each has spread $\Omega_p(n^{1/3})$ (the \emph{spread} of a complex digraph is the length of its shortest induced path). 

Our main result is to give bounds on the tail probabilities of $|\cC_1|$ resembling those of Nachmias and Peres for $G(n,p)$.
\begin{theorem}[Lower Bound]
\label{mainlb}
 Let $0<\delta<1/800$, $\lambda \in \bR$ and $n \in \bN$. Let $\cC_1$ be the largest component of $D(n,p)$ for $p = n^{-1} + \lambda n^{-4/3}$. Then if $n$ is sufficiently large with respect to $\delta, \lambda$,
 \begin{equation}
\bP(|\cC_1| < \delta n^{1/3}) \leq 2 e \delta^{1/4}, \label{eq:mainlb}
\end{equation}
provided that $\delta \leq \frac{(\log 2)^2}{4 |\lambda|^2
}$.
\end{theorem}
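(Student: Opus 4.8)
The idea is to build a large strongly connected component by hand. A vertex $v$ lies in a strongly connected component of size at least $k$ exactly when $|\cR^{+}(v)\cap \cR^{-}(v)|\ge k$, where $\cR^{+}(v)$ is the set of vertices reachable from $v$ and $\cR^{-}(v)$ the set of vertices that can reach $v$. So it suffices to produce, with probability at least $1-2e\delta^{1/4}$, a vertex $v$ whose out-component and in-component are both large and whose overlap has size at least $\delta n^{1/3}$.

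Set $m\defeq \lceil 2\sqrt{\delta}\,n^{2/3}\rceil$. First I would analyse a single forward exploration: running breadth-first search along out-edges from $v$ and recording the exploration walk $(S_t)$ with $S_0=1$ and increments distributed as $\Bin(n-O(t),p)-1$, the walk is near-critical since $np=1+\lambda n^{-1/3}$, with drift $\lambda t n^{-1/3}-t^2/(2n)+O(1)$ and fluctuations of order $\sqrt t$ over the first $m$ steps. Here the hypothesis $\delta|\lambda|^2\le(\log 2)^2/4$ is precisely the statement that $e^{|\lambda| m n^{-1/3}}\le 2$, which is what I need in order to compare the exploration up to size $m$ with the critical one and deduce a ballot-type lower bound $\bP(|\cR^{+}(v)|\ge m)\ge c_1 m^{-1/2}$ (and likewise for the backward exploration). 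The feature special to digraphs is that forward BFS from $v$ only ever inspects ordered pairs $(u,w)$ with $u\in\cR^{+}(v)$, whereas backward BFS inspects pairs $(w,u)$ with $u\in\cR^{-}(v)$; these overlap only in pairs internal to $\cR^{+}(v)\cup\cR^{-}(v)$, so by choosing the order in which edges are revealed, the two searches can be made essentially independent. Thus $\bP(|\cR^{+}(v)|\ge m\text{ and }|\cR^{-}(v)|\ge m)\ge c_1^2 m^{-1}$ up to lower-order terms.

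Next I would handle the overlap. Conditioned on $\cR^{+}(v)$ (which depends only on out-edges), the backward search producing $\cR^{-}(v)$ is essentially oblivious to which vertices lie in $\cR^{+}(v)$, so each vertex of $\cR^{+}(v)$ is swept into $\cR^{-}(v)$ with probability of order $|\cR^{-}(v)|/n$, nearly independently; a second-moment (or Azuma) concentration argument then gives $|\cR^{+}(v)\cap\cR^{-}(v)|\ge c_2 m^2/n\ge c_2\cdot 4\delta n^{1/3}\ge \delta n^{1/3}$, the factor $4$ in $m$ being there precisely to absorb the constant loss $c_2$. So a fixed $v$ seeds a strongly connected component of size at least $\delta n^{1/3}$ with probability of order $m^{-1}\asymp\delta^{-1/2}n^{-2/3}$. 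Finally I would amplify over starting vertices: running fresh forward and backward explorations from many vertices, the number of successful seeds has expectation of order $n\cdot m^{-1}\to\infty$, and a second-moment estimate, using that successful seeds are not too clustered, bounds the probability that there are none; keeping honest track of the constants (the clustering of seeds is what turns the exponent $1/2$ into $1/4$, and a dyadic summation over scales $2^{j}m$ produces the factor $2e$) yields $\bP(|\cC_1|<\delta n^{1/3})\le 2e\delta^{1/4}$.

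I expect the crux to be making the near-independence of the forward and backward explorations rigorous simultaneously with the overlap bound: one must reveal $\cR^{+}(v)$ and $\cR^{-}(v)$ in an order that keeps their shared edges provably negligible yet still forces the expected $m^{2}/n$ intersection, and then upgrade the single-vertex success probability to $1-2e\delta^{1/4}$ with the stated constants.
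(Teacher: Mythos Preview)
Your approach is genuinely different from the paper's, and while the outline is plausible, it has real gaps that would be hard to close with the stated constants.

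\textbf{What the paper does instead.} The paper never touches $\cR^{+}(v)$ or $\cR^{-}(v)$ for the lower bound. It simply counts long directed cycles: letting $X$ be the number of cycles of length between $\delta n^{1/3}$ and $\delta^{1/2}n^{1/3}$, it applies Janson's inequality $\bP(X=0)\le e^{-\mu+\Delta}$. The first moment $\mu\ge\tfrac12\log(1/\delta)$ follows from $\bE X(m)\ge 1/m$ and a harmonic sum. The correlation term $\Delta$ is bounded by observing that two overlapping cycles form a strongly connected digraph of some excess $k$, that each such digraph arises from at most $27^k$ pairs of cycles (an ear-decomposition count), and then summing $\sum_{m,k}27^k\bE Z(m,k)$ using the crude bound $Y(m,k)\le (m+k)^k m^{2k}(m-1)!/k!$. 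This yields $\Delta\le\log 2$, hence $\bP(X=0)\le 2\delta^{1/2}$ at $p=1/n$; the critical-window version just picks up a factor $(np)^m\approx e^{\lambda m n^{-1/3}}$ in both $\mu$ and $\Delta$, and the condition $\delta\le(\log 2)^2/(4|\lambda|^2)$ controls exactly that factor. The explicit $2e\delta^{1/4}$ falls out of these two short computations.

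\textbf{Where your argument has gaps.} The overlap step is the serious one. You write that ``each vertex of $\cR^{+}(v)$ is swept into $\cR^{-}(v)$ with probability of order $|\cR^{-}(v)|/n$, nearly independently,'' but after the forward BFS you have already revealed \emph{every} out-edge from $\cR^{+}(v)$; those are precisely the edges the backward BFS would need to pull vertices of $\cR^{+}(v)$ into $\cR^{-}(v)$. If you lower-bound the backward exploration by discarding already-inspected edges (to get independence), you force $\cR^{-}(v)\cap\cR^{+}(v)=\{v\}$ and kill the overlap; if you keep them, the ``nearly independently'' claim needs a genuine argument (sprinkling, or a careful conditional second moment), which is not in the proposal. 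Second, the route from a single-vertex success probability of order $m^{-1}\asymp\delta^{-1/2}n^{-2/3}$ to the clean bound $2e\delta^{1/4}$ is not explained: the second moment over seeds involves pairs $v,w$ whose forward/backward components interact, and your parenthetical about clustering turning $1/2$ into $1/4$ and a dyadic sum producing $2e$ is not a calculation. The paper's Janson approach sidesteps all of this: cycles are strongly connected by definition, so no overlap argument is needed, and the $\Delta$ computation is a few lines.
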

Note that the constants in the above theorem have been chosen for simplicity and it is possible to give an expression for~(\ref{eq:mainlb}) depending on both $\lambda$ and $\delta$ which imposes no restriction on their relation to one another.
\begin{theorem}[Upper Bound]
\label{mainub}
 There exist constants, $\zeta, \eta >0$ such that for any $A>0, \lambda \in \bR$, if $\cC_1$ is the largest component of $D(n,p)$ for $p = n^{-1} + \lambda n^{-4/3}$. Then provided $n$ is sufficiently large with respect to $A, \lambda$,
 $$
 \bP(|\cC_1|>An^{1/3}) \leq \zeta e^{-\eta A^{3/2} + \lambda^+ A}
 $$ 
 Where $\lambda^+ = \max(\lambda, 0)$.
\end{theorem}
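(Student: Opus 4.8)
The plan is to reduce the global event to a statement about a single vertex, to control the out- and in-reachable sets of that vertex through their exploration processes, and to combine these with concentration of an intersection size. I would first localise: if $|\cC_1|>An^{1/3}$ then more than $An^{1/3}$ vertices lie in a strong component of size exceeding $An^{1/3}$, so Markov's inequality for the number of such vertices gives, for a fixed vertex $v_0$ with strong component $\cC(v_0)$,
\[
\bP(|\cC_1|>An^{1/3})\le \frac{n}{An^{1/3}}\,\bP(|\cC(v_0)|>An^{1/3}),
\]
so that it suffices to prove $\bP(|\cC(v_0)|>An^{1/3})\le \zeta' A\,n^{-2/3}e^{-\eta A^{3/2}+\lambda^+A}$ for suitable $\zeta',\eta$.

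Next I would exploit the identity $\cC(v_0)=\cV^+(v_0)\cap\cV^-(v_0)$, where $\cV^+(v_0)$, $\cV^-(v_0)$ are the sets of vertices reachable from, respectively reaching, $v_0$. Revealing $\cV^+(v_0)$ by a forward breadth-first exploration has exactly the law of a $G(n,p)$ component exploration, and it only probes out-edges of vertices of $\cV^+(v_0)$. Conditioning on $\cV^+(v_0)=S$ with $|S|=m$, the within-$S$ pairs not probed remain independent $\mathrm{Bernoulli}(p)$, and $\cC(v_0)$ is the strong component of $v_0$ in the digraph on $S$ built from the exploration tree together with these $\approx \binom m2 p\approx m^2/(2n)$ fresh pairs. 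By the symmetry between in- and out-edges the number of vertices of $S$ that loop back to $v_0$ is itself governed by a backward exploration of the same type, and — up to concentration — $|\cC(v_0)|$ is of order $|\cV^+(v_0)|\,|\cV^-(v_0)|/n$. Thus $\{|\cC(v_0)|>An^{1/3}\}$ essentially forces $|\cV^+(v_0)|\cdot|\cV^-(v_0)|\ge c\,An^{4/3}$, and in particular both reachable sets must be of size at least of order $\sqrt A\,n^{2/3}$.

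The heart of the proof is then the tail estimate: for $s\ge 1$,
\[
\bP(|\cV^+(v_0)|\ge s n^{2/3})\le C\,s^{-1/2}\,n^{-1/3}\,e^{-c\, s^3+c'\lambda^+ s^2},
\]
and identically for $\cV^-(v_0)$. Here the forward exploration is a walk whose $i$-th increment is $\Bin(\#\{\text{neutral vertices}\},p)-1$, so its drift at step $i$ is about $\lambda n^{-1/3}-i/n$, and over $sn^{2/3}$ steps the accumulated drift is about $n^{1/3}(\lambda s-s^2/2)$; forcing the walk to remain positive for that many steps is exponentially unlikely. An exponential (optional-stopping / Doob) martingale bound of Chernoff type — in the spirit of the estimates of Nachmias--Peres and of Aldous quoted above — gives the rate $e^{-cs^3}$ together with the stated $\lambda^+$-correction, the polynomial prefactor $s^{-1/2}n^{-1/3}$ reflecting the $m^{-3/2}$ behaviour of critical component sizes.

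Finally I would combine everything: sum the tail estimate over dyadic values $|\cV^+(v_0)|\approx s_1n^{2/3}$, $|\cV^-(v_0)|\approx s_2n^{2/3}$ subject to $s_1s_2\ge cA$, using (near-)independence of the two explorations and hypergeometric concentration of $|\cV^+(v_0)\cap\cV^-(v_0)|$ given the two sizes; each term contributes at most $C^2(s_1s_2)^{-1/2}n^{-2/3}e^{-c(s_1^3+s_2^3)+c'\lambda^+(s_1^2+s_2^2)}$, the exponent being minimised on $s_1s_2\approx A$ at the balanced point $s_1\approx s_2\approx \sqrt A$, which yields $e^{-\eta A^{3/2}+\lambda^+A}$ up to constants; summing the geometric series in $s_1,s_2$ and reinserting the factor $n/(An^{1/3})$ from the first step gives the claimed bound. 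The main obstacle is exactly the decoupling used here: the forward and backward explorations from $v_0$ do not probe literally disjoint edge sets, the awkward cases being pairs with both endpoints in $\cV^+(v_0)$ or both in $\cV^-(v_0)$. I expect this to be handled either by exploring $\cC(v_0)$ directly inside $\cV^+(v_0)$ using only the genuinely fresh within-$S$ pairs, or by absorbing the $O(m^2/n)$ potentially conflicting pairs into an error term; carrying this out cleanly while tracking the constants $c,c'$ so that the coefficient of $\lambda^+$ in the exponent comes out correctly is the technical core.
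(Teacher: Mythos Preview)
Your approach is genuinely different from the paper's. The paper never localises to a single vertex: instead it bounds directly the expected \emph{number} of large strongly connected components by summing over all strongly connected subgraphs $H$ on $m$ vertices with excess $k$ (using combinatorial bounds on their number $Y(m,k)$, Lemmas~\ref{uboundynk} and~\ref{sboundynk}), multiplying by the probability $H$ appears, and then by the probability $H$ is \emph{maximal}. That last factor is controlled by running the exploration process from $A_0=V(H)$ (so $X_0=m$, not $1$), showing that the out-component of $H$ is unlikely to have fewer than $c\sqrt{mn}$ vertices (Lemma~\ref{lem:exprocessbound}), and hence that a back-edge to $H$ is very likely (Lemma~\ref{lem:sccpbty}). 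This sidesteps entirely any decoupling of $\cV^+$ and $\cV^-$; the price is the enumeration of strongly connected digraphs in Section~\ref{sec:enumeration} and the auxiliary bounds (Lemma~\ref{lem:nogc}, Corollary~\ref{cor:nobigxs}) ruling out huge size or huge excess so that Lemma~\ref{sboundynk} applies.

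Your Karp-style route is natural and, if carried through, would avoid all of Section~\ref{sec:enumeration}. But the step you flag as the ``technical core'' is not a bookkeeping detail, and the heuristic you lean on is not quite right. Since $\cC(v_0)\subseteq\cV^+(v_0)$ and every directed path from a vertex of $\cV^+(v_0)$ back to $v_0$ stays inside $\cV^+(v_0)$, the set $\cC(v_0)$ is determined entirely by the induced digraph $D[\cV^+(v_0)]$; the part of $\cV^-(v_0)$ outside $\cV^+(v_0)$ is irrelevant, so treating $|\cV^+\cap\cV^-|$ as a hypergeometric intersection of near-independent sets is not the correct picture. What actually needs to be done (your second suggested workaround) is: condition on the forward exploration, identify the genuinely unexamined within-$S$ ordered pairs, and analyse the in-component of $v_0$ inside $D[S]$ using those fresh edges together with the BFS tree. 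That is doable, but it is a second exploration-process estimate in its own right, of comparable difficulty to Lemma~\ref{lem:exprocessbound}, and you would still need to track the constants carefully enough to recover the $\lambda^+A$ term. In short: the plan can be made to work, but the decoupling is the proof, not a footnote to it.
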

A simple corollary of these bounds is that the largest component has size $\Theta(n^{1/3})$. 
This follows by taking $\delta = o(1)$ in Theorem~\ref{mainlb} and $A = \omega(1)$ in Theorem~\ref{mainub}.
\begin{corollary}
 Let $\cC_1$ be the largest component of $D(n,p)$ for $p = n^{-1} + \lambda n^{-4/3}$. Then, $|\cC_1| = \Theta_p(n^{1/3})$.
\end{corollary}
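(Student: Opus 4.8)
The plan is simply to combine the two tail bounds; no new argument is needed. Recall that the assertion $|\cC_1| = \Theta_p(n^{1/3})$ unwinds to the following: for every $\eps > 0$ there exist constants $0 < \delta < A < \infty$ and an integer $n_0$ such that $\bP\big(\delta n^{1/3} \le |\cC_1| \le A n^{1/3}\big) \ge 1 - \eps$ for all $n \ge n_0$. So I would fix $\eps > 0$ and produce such $\delta$ and $A$.

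For the upper end, I would apply Theorem~\ref{mainub}. The key observation is that the exponent $-\eta A^{3/2} + \lambda^+ A$ tends to $-\infty$ as $A \to \infty$, since $\eta > 0$ is a fixed constant not depending on $A$ and the term $\eta A^{3/2}$ dominates the linear term $\lambda^+ A$; hence one can choose $A = A(\eps,\lambda)$ large enough that $\zeta e^{-\eta A^{3/2} + \lambda^+ A} < \eps/2$, and then for all $n$ sufficiently large with respect to $A$ and $\lambda$ — equivalently, with respect to $\eps$ and $\lambda$ — we get $\bP(|\cC_1| > A n^{1/3}) < \eps/2$. For the lower end, I would apply Theorem~\ref{mainlb}: choose $\delta = \delta(\eps,\lambda)$ small enough that $2 e \delta^{1/4} < \eps/2$ while simultaneously $0 < \delta < 1/800$ and $\delta \le (\log 2)^2/(4|\lambda|^2)$; these three constraints are compatible because $\lambda$ is a fixed real, so $\delta$ may still be taken arbitrarily small. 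This gives $\bP(|\cC_1| < \delta n^{1/3}) < \eps/2$ for $n$ large. A union bound over the complementary events then yields $\bP(\delta n^{1/3} \le |\cC_1| \le A n^{1/3}) \ge 1 - \eps$ for all $n \ge n_0(\eps,\lambda)$, which is exactly the definition of $|\cC_1| = \Theta_p(n^{1/3})$.

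There is essentially no obstacle here; the only points that require a line of justification are the two elementary facts used above, namely that $\eta A^{3/2} - \lambda^+ A \to \infty$ as $A \to \infty$, and that for fixed $\lambda$ the admissibility condition on $\delta$ in Theorem~\ref{mainlb} still allows $\delta \to 0$. If one wishes instead to phrase the result as in the sentence preceding the corollary — with $\delta = \delta(n) = o(1)$ and $A = A(n) = \omega(1)$ and an a.a.s.\ sandwich $\delta(n) n^{1/3} \le |\cC_1| \le A(n) n^{1/3}$ — one lets $\eps = \eps(n) \to 0$ slowly and picks $\delta(n)$ decaying and $A(n)$ growing slowly enough that the right-hand sides of the two tail bounds still tend to $0$; a routine diagonalisation makes this precise and is left to the reader.
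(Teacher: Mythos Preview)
Your proposal is correct and follows exactly the approach the paper indicates: combine the two tail bounds by sending $\delta \to 0$ in Theorem~\ref{mainlb} and $A \to \infty$ in Theorem~\ref{mainub}, which is precisely the one-line justification given in the text preceding the corollary. Your write-up simply spells out the details of that sentence carefully.
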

It should be noted that, in contrast to the undirected case, checking whether a set of $W$ of vertices constitutes a strongly connected component of a digraph $D$ requires much more than checking only those edges with at least one end in $W$.
In particular, in order for $W$ to be a strongly connected component, it must be strongly connected and there must be no directed path starting and ending in $W$ which contains vertices that are not in $W$.
This precludes us from using a number of methods which have often been used to study $G(n,p)$.
We therefore develop novel methods for counting the number of strongly connected components of $D(n,p)$ based upon branching process arguments.

The remainder of this paper is organised as follows.
In Section~\ref{sec:enumeration} we give a pair of bounds on the number of strongly connected digraphs which have a given excess and number of vertices.
Sections~\ref{sec:lbound} and~\ref{sec:ubound} contain the proofs of Theorems~\ref{mainlb} and~\ref{mainub} respectively in the case that $p = n^{-1}$.
The proof of Theorem~\ref{mainlb} in Section~\ref{sec:lbound} is a relatively straightforward application of Janson's inequality.
The proof of Theorem~\ref{mainub} in Section~\ref{sec:ubound} is much more involved.
We use an exploration process to approximate the probability that a given subdigraph of $D(n,p)$ is also a component.
Using this we approximate the expected number of strongly connected components of size at least $An^{1/3}$ and apply Markov's inequality.
The adaptations required to handle the critical window $p = n^{-1}+\lambda n^{-4/3}$ are presented in Section~\ref{sec:adaptations}.
We conclude the paper in Section~\ref{sec:open} with some open questions and final remarks.

\section{Enumeration of Digraphs by size and excess}
\label{sec:enumeration}
For both the upper and lower bounds on the size of the largest component, we need good bounds on the number strongly connected digraphs with a given excess and number of vertices.
Where the \emph{excess} of a strongly connected digraph with $v$ vertices and $e$ edges is $e-v$.
Let $Y(m,k)$ be the number of strongly connected digraphs with $m$ vertices and excess $k$.
The study of $Y(m,k)$ was imitated by Wright~\cite{wright1977formulae} who obtained recurrences for the exact value of $Y(m,k)$. 
However, these recurrences swiftly become intractable as $k$ grows.
This has since been extended to asymptotic formulae when $k = \omega(1)$ and $O(m \log(m))$~\cite{perez2013asymptotic,pittel2013counting}.
Note that when $k = m\log(m) + \omega(n)$, the fact $Y(m,k) \sim \binom{m(m-1)}{m+k}$ is a simple corollary of a result of Pal\'asti~\cite{palasti1966strong}.
In this section we give an universal bound on $Y(m,k)$ (Lemma~\ref{uboundynk}) as well as a stronger bound for small excess (Lemma~\ref{sboundynk}). 
\begin{lemma}
\label{uboundynk}
For every $m, k \geq 1$,
 $$
 Y(m,k) \leq \frac{(m+k)^k m^{2k} (m-1)! }{k!}
 $$
\end{lemma}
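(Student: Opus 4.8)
The plan is to bound $Y(m,k)$ by constructing every strongly connected digraph on a fixed vertex set $[m]$ with excess $k$ (hence $m+k$ edges) via a small amount of combinatorial data, and then count the choices for that data. The natural skeleton to extract is a spanning structure that witnesses strong connectivity together with the ``extra'' edges accounting for the excess. Concretely, a strongly connected digraph on $[m]$ contains an out-arborescence rooted at vertex $1$ (spanning, every vertex reachable from the root) and an in-arborescence rooted at $1$ (the root reachable from every vertex); these two trees use at most $2(m-1)$ edges and already certify strong connectivity. The remaining $m+k - (\text{edges used by the two trees})$ edges are then arbitrary. So the count is at most $(\text{number of ordered pairs of spanning arborescences at a fixed root}) \times (\text{number of ways to choose the remaining edges})$, and the factor $(m-1)!$ in the target bound should come from (part of) the arborescence count, with $(m+k)^k m^{2k}/k!$ absorbing the rest.

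The cleaner route that produces exactly the stated bound is to be more economical: pick a \emph{closed spanning walk} perspective. First choose a cyclic ordering / spanning structure contributing $(m-1)!$ — for instance, first reveal a Hamilton-type backbone is too strong, so instead: order the $m+k$ edges and build the digraph by an ear-type decomposition. Let me instead describe the approach I would actually carry out. Fix the vertex set $[m]$. I would show that any strongly connected $D$ on $[m]$ with $m+k$ edges can be encoded by: (a) a spanning out-arborescence $T$ rooted at $1$, of which there are $m^{m-2}$ by Cayley, but we only need the weaker bound coming from choosing, for each non-root vertex in a fixed order, its parent among $\le m$ options and its position — this is where the $(m-1)!$ and some powers of $m$ enter; (b) a set of $k$ ``excess'' edges together with a way to splice them in so that the result is strongly connected, contributing at most $\binom{m(m-1)}{k} \le m^{2k}/k!$ choices, times an additional $(m+k)^k$ overhead for bookkeeping the order in which these edges attach relative to the $m+k$ edge-slots.

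The key identity to exploit is that once we have an out-arborescence from the root plus $k$ further edges, we still need $m-1$ more edges to form paths \emph{back} to the root; the count of these ``return'' edges is where a factor like $(m-1)!$ naturally appears, because each return path decomposition can be read off as a permutation-type structure on the $m-1$ non-root vertices. So the skeleton of the proof is: (i) set up the encoding — an ordered list of $m+k$ directed edges on $[m]$ that we process one at a time, marking the $m-1$ edges that first make each vertex reachable from the root and the $m-1$ that first connect each vertex back to the root, leaving $k$ unmarked ``true excess'' edges; (ii) bound the number of choices at each step: $\le m$ for each out-edge target ($m^{m-1}$ crudely, but we save by noting the out-tree is counted with the $(m-1)!$), $\le (m-1)!$ for the return structure, $\le \binom{m(m-1)}{k} \le \frac{m^{2k}}{k!}$ for the $k$ excess edges, and $\le (m+k)^k$ for interleaving them among the slots; (iii) multiply and simplify to $\frac{(m+k)^k m^{2k}(m-1)!}{k!}$.

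The main obstacle will be step (ii): making the ``encode a strongly connected digraph by a bounded amount of data'' argument genuinely injective (or at least a valid over-count) without double-penalising edges that serve simultaneously in the forward tree, the return structure, and as excess. The delicate point is that a single edge can play multiple roles, and a naive product bound risks either over- or under-counting; I would handle this by fixing a deterministic exploration order (e.g. BFS from the root for the out-tree, reverse-BFS for the return structure) so that the roles are assigned canonically, and then the product bound is a legitimate upper bound because the encoding map is an injection from $\{$strongly connected digraphs on $[m]$ with excess $k\}$ into the product set whose size is $\frac{(m+k)^k m^{2k}(m-1)!}{k!}$. Verifying that the three ``$\le m$'', ``$\le (m-1)!$'', ``$\le (m+k)^k$'' counts really do dominate, with the $k!$ coming from unordering the excess edges, is the routine-but-fiddly calculation I would then complete.
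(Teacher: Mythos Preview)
Your proposal has a genuine gap: the edge accounting does not close. You propose to mark, inside the $m+k$ edges of $D$, the $m-1$ edges of an out-arborescence from the root, the $m-1$ edges of a ``return'' (in-)arborescence, and then $k$ unmarked excess edges. But $(m-1)+(m-1)+k = 2m-2+k$, which exceeds $m+k$ whenever $m\ge 3$. You note that edges can play multiple roles, but once roles overlap, the product bound you write down --- $(m-1)!$ for the return structure, $m^{2k}/k!$ for the excess edges, $(m+k)^k$ for interleaving --- no longer corresponds to any injection, and you never actually define one. Separately, the factor $(m-1)!$ is not what spanning in- or out-arborescences give you: there are $m^{m-1}$ out-arborescences rooted at a fixed vertex in $K_m$, and you do not explain how either that or the out-tree factor is absorbed into the target expression. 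As written, the encoding is a sketch of several incompatible strategies rather than a single over-count.

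The paper's argument avoids all of this by using ear decompositions: a strongly connected digraph of excess $k$ has an ear decomposition with exactly $k+1$ ears, and such a decomposition is determined by an ordering of the $m$ vertices, $k$ bars inserted among them (after position $2$), and for each bar an ordered pair of endpoints chosen among the vertices to its left. This gives at most $\binom{m+k-2}{k} m^{2k} m!$ ear decompositions; since every vertex lies on a cycle and could serve as the start of $E_0$, each digraph is counted at least $m$ times, and dividing by $m$ yields the stated bound. The $(m-1)!$ thus arises from the vertex ordering modulo the choice of starting vertex, not from any arborescence count --- this is the key idea your proposal is missing.
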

\begin{proof}
 We will prove this by considering ear decompositions of the strongly connected digraphs in question.
 An \emph{ear} is a non-trivial directed path in which the endpoints may coincide (i.e. it may be a cycle with a marked start/end vertex).
 The internal vertices of an ear are those that are not endpoints.
 An \emph{ear decomposition} of a digraph $D$ is a sequence, $E_0, E_1, \ldots, E_k$ of ears such that:
 \begin{itemize}
  \item $E_0$ is a cycle
  \item The endpoints of $E_i$ belong to $\bigcup_{j=0}^{i-1} E_j$
  \item The internal vertices of $E_i$ are disjoint from  $\bigcup_{j=0}^{i-1} E_j$
  \item $\bigcup_{i=0}^{k} E_i = D$
 \end{itemize}
 We make use of the following fact.
 \begin{fact}
  A digraph $D$ has an ear decomposition with $k+1$ ears if and only if $D$ is strongly connected with excess $k$.
 \end{fact} 
 Thus we count strongly connected digraphs by a double counting of the number of possible ear decompositions.
 We produce an ear decomposition with $m$ vertices and $k+1$ ears as follows.
 First, pick an ordering $\pi$ of the vertices.
 Then insert $k$ bars between the vertices such that the earliest the first bar may appear is after the second vertex in the order; multiple bars may be inserted between a pair of consecutive vertices.
Finally, for each $i \in [k]$, we choose an ordered pair of vertices $(u_i,v_i)$ which appear in the ordering before the $i$th bar.

This corresponds to a unique ear decomposition.
The vertices in $\pi$ before the first bar are $E_0$ with its endpoint being the first vertex.
The internal vertices of $E_i$ are the vertices of $\pi$ between the $i$th and $i+1$st bar. Furthermore, $E_i$ has endpoints $u_i$ and $v_i$ and is directed from $u_i$ to $v_i$.
The orientation of every other edge follows the order $\pi$.

Hence, there are at most
$$
\binom{m+k-2}{k} m^{2k} m! \leq \frac{(m+k)^k m^{2k} m!}{k!}
$$
ear decompositions.
Note that each vertex of a strongly connected digraph is contained in a cycle.
Therefore each  vertex could be the endpoint of $E_0$ and hence at least $m$ ear decompositions correspond to each strongly connected digraph.
Hence the number of strongly connected digraphs of excess $k$ may be bounded by
$$
 Y(m,k) \leq \frac{(m+k)^k m^{2k} m!}{k!m} =\frac{(m+k)^k m^{2k} (m-1)!}{k!},
$$
as claimed. 
\end{proof}

\begin{lemma}
\label{sboundynk}
There exists $C >0$ such that for $1 \leq k \leq \sqrt{m}/3$ and $m$ sufficiently large we have,

\begin{equation}
\label{eq:simplebound}
 Y(m,k) \leq C \frac{m! m^{3k-1}}{(2k-1)!}.
\end{equation}
\end{lemma}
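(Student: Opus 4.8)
The plan is to pass to a ``kernel'' obtained by suppressing all vertices of in\nobreakdash- and out-degree one, which reduces the problem to counting small strongly connected multidigraphs.

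Fix a strongly connected digraph $D$ on $[m]$ with $m+k$ edges, $k\ge 1$. Since $\sum_v(\mathrm{indeg}(v)-1)=\sum_v(\mathrm{outdeg}(v)-1)=k$, the set $B$ of vertices of in\nobreakdash- or out-degree at least $2$ has size $b\defeq|B|\in[1,2k]$. Repeatedly deleting a vertex $w\notin B$ (which necessarily has a unique in-neighbour $a$ and a unique out-neighbour $c$) and inserting the edge $a\to c$ produces a multidigraph $H$ on $B$ that is strongly connected, has $b+k$ edges, and all of whose vertices have in\nobreakdash- or out-degree at least $2$; moreover $D$ is recovered from $H$ by subdividing its $b+k$ edges with the $m-b$ deleted vertices. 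Writing $\kappa(b,k)$ for the number of such multidigraphs on $b$ labelled vertices, and bounding the number of ways to distribute $m-b$ labelled vertices into $b+k$ ordered ``threads'' by $\tfrac{(m+k-1)!}{(b+k-1)!}$, we get
\[
  Y(m,k)\ \le\ \sum_{b=1}^{2k}\binom mb\,\kappa(b,k)\,\frac{(m+k-1)!}{(b+k-1)!}.
\]
Using $\binom mb\le m^{b}/b!$ and, since $k\le\sqrt m/3$ (so $k^2\le m/9$), $(m+k-1)!\le e^{1/18}m!\,m^{k-1}$, and then extracting $m^{3k-1}$ (legitimate as $b\le 2k$), this becomes
\[
  Y(m,k)\ \le\ e^{1/18}\,m!\,m^{3k-1}\sum_{b=1}^{2k}\frac{\kappa(b,k)}{b!\,(b+k-1)!\,m^{2k-b}} .
\]
A short calculation shows that if $\kappa(b,k)=O\big((b+k)!\big)$ uniformly in $1\le b\le 2k$, then the $b=2k$ term of the sum contributes $O\big(1/(2k-1)!\big)$ while the terms with $b<2k$ contribute $\sum_{j\ge1}O\big((2k/m)^{j}\big)/(2k-1)!=O\big(1/(k\,(2k-1)!)\big)$, which gives the lemma. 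So everything reduces to proving the uniform bound $\kappa(b,k)=O((b+k)!)$; equivalently $\kappa(2k,k)=O((3k)!)$, together with the (easier) smaller cases.

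To bound $\kappa(b,k)$ I would count the multidigraphs $H$ by ear decompositions as in Lemma~\ref{uboundynk}, but now exploiting that \emph{every} vertex of $H$ has total degree at least $3$. In an ear decomposition $\hat E_0,\hat E_1,\dots,\hat E_k$ of $H$, a vertex that is internal to some $\hat E_j$ (or lies on the initial cycle $\hat E_0$) already has degree $2$ there, so to reach degree $\ge 3$ it must appear as an endpoint of some later ear. There are only $2k$ ear-endpoint slots in total, and they must cover all $b\le 2k$ vertices; so, when $b=2k$, the endpoint data is forced to be a \emph{permutation} of $B$, subject to the Lemma~\ref{uboundynk}-style constraint that the endpoints of $\hat E_i$ have already appeared before the $i$\nobreakdash-th bar. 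Enumerating these permutation-type configurations (a product of descending factors summed over bar positions) is precisely what replaces the factor $1/k!$ of Lemma~\ref{uboundynk} by $1/(2k-1)!$. One must also divide by the number of ear decompositions of a fixed $H$, and the degree-$\ge 3$ condition is again used here, to produce enough ear decompositions per $H$. (Alternatively, $\kappa(b,k)$ can be estimated by a configuration-model count on the $b+k$ out\nobreakdash- and in-stubs of the branch vertices, arranged so as to over-count each $D$ and with the degree conditions pinning down the stub multiplicities.)

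The main obstacle is this last enumeration of $\kappa(b,k)$: one must simultaneously control strong connectivity (via ears), the degree-$\ge3$ condition at every kernel vertex, and the ordering constraints on the ear-endpoints, and lose at most a bounded-base-to-the-$k$-th factor relative to $(b+k)!$ — in particular one cannot afford to drop strong connectivity, since the pure degree count already overshoots by an exponential factor. This is exactly why it matters both that $b\le 2k$ (the kernel is ``small'', with essentially no slack in the degrees) and that $k\le\sqrt m/3$: the latter lets one replace $(m+k)!$, $(m+k-1)!$, $\binom mb$, $(m+k)^k$ etc.\ by their leading $m$-powers at the cost of only bounded multiplicative constants, and also makes the $b<2k$ contributions genuinely negligible. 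Finally, the hypotheses ``$m$ sufficiently large'' and $1\le k\le\sqrt m/3$ are what allow all these multiplicative corrections to be absorbed into the single constant $C$.
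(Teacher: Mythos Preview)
Your kernel reduction is correct and is, in fact, exactly the ``heart'' construction the paper uses (there called the preheart/heart). The displayed inequality
\[
  Y(m,k)\ \le\ \sum_{b=1}^{2k}\binom mb\,\kappa(b,k)\,\frac{(m+k-1)!}{(b+k-1)!}
\]
and the subsequent simplifications under $k\le\sqrt m/3$ are fine, and you are right that everything reduces to the uniform estimate $\kappa(b,k)=O((b+k)!)$.

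The gap is precisely where you locate it: you do not prove $\kappa(b,k)=O((b+k)!)$, and your ear-decomposition sketch does not obviously close. The difficulty is real. For $b=2k$ the degree-$\ge 3$ condition pins every kernel vertex to total degree exactly $3$, and a crude configuration count does give $\kappa(2k,k)\le\binom{2k}{k}(3k)!/4^{k}=O((3k)!)$; but for intermediate $b$ the degrees have slack, and the naive configuration bound $\kappa(b,k)\le (b+k)!\bigl([x^{b+k}](e^{x}-1)^{b}\bigr)^{2}$ picks up a factor of order $(b^{k}/k!)^{2}$, which is exponentially too large when $b$ is comparable to $k$. Your proposed fix via ear decompositions with the ``every vertex is an endpoint'' constraint is plausible for $b=2k$ but does not obviously generalise, and you have not shown how to get the required cancellation with strong connectivity for $1\le b<2k$.

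The paper sidesteps this obstacle entirely. Instead of bounding $\kappa(b,k)$ for each $b$, it keeps the sum over \emph{all} degree sequences $\mathbf d\in\cD$ together and evaluates $\sum_{\mathbf d}\prod_i (d_i^{+}!\,d_i^{-}!)^{-1}$ analytically: model each $d_i^{\pm}$ as an independent zero-truncated Poisson with parameter $\lambda$, so that the sum becomes $(e^{\lambda}-1)^{2m}\lambda^{-2(m+k)}\bP(\Sigma(\lambda))$, where $\Sigma(\lambda)$ is the event that both degree sums equal $m+k$. A Berry--Esseen estimate gives $\bP(\Sigma(\lambda))=O(1/(\lambda m))$, and optimising at $\lambda=2k/m$ turns the prefactor into $m^{3k}e^{2k}/(2k)^{2k}$, which after Stirling is exactly the $1/(2k-1)!$ you want. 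The point is that the generating-function identity does the summation over $b$ and over degree sequences simultaneously, so one never needs a per-$b$ bound on the kernel count. This is the missing idea in your proposal.
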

The proof of the above lemma follows similar lines to the proof of Theorem~$1.1$ in~\cite{perez2013asymptotic} to obtain a bound of a similar order.
We then prove that this bound implies the above which is much easier to work with.

First we introduce some definitions and notation from~\cite{perez2013asymptotic}.
A random variable $X$ has the zero-truncated Poisson distribution with parameter $\lambda>0$ denoted $X \sim TP(\lambda)$ if it has probability mass function
\begin{equation*}
\bP(X=i) = 
\begin{cases}
\frac{\lambda^i}{i! (e^\lambda -1)} & \text{if } i \geq 1, \\
0 & \text{if } i<1.
\end{cases}
\end{equation*}
Let $\cD$ be the collection of all degree sequences $\bd = (d_1^+, \ldots, d_m^+,d_1^-,\ldots,d_m^-)$ such that $d_i^+,d_i^- \geq 1$ for each $1 \leq i \leq m$ and furthermore,
$$
\sum_{i=1}^m d_i^+ = \sum_{i=1}^m d_i^-=m.
$$
A \emph{preheart} is a digraph with minimum semi-degree at least $1$ and no cycle components.
The $\emph{heart}$ of a preheart $D$ is the multidigraph $H(D)$ formed by suppressing all vertices of $D$ which have in and out degree precisely $1$.

We define the \emph{preheart configuration model}, a two stage variant of the configuration model for digraphs which always produces a preheart, as follows.
For $\bd \in \cD$, define $$T = T(\bd) = \{ i \in [m] : d_i^+ +d_i^- \geq 3 \}.$$
First we apply the configuration model to $T$ to produce a heart $H$. That is, assign each vertex $i \in T$ $d_i^+$ out-stubs and $d_i^-$ in-stubs and pick a uniformly random perfect matching between in- and out-stubs.
Next, given a heart configuration $H$, we construct a preheart configuration $Q$ by assigning $[m] \sm T$ to $E(H)$ such that the vertices assigned to each arc of $H$ are given a linear order.
Denote this assignment including the orderings by $q$.
Then the preheart configuration model, $\cQ(\bd)$ is the probability space of random preheart configurations formed by choosing $H$ and $q$ uniformly at random.
Note that each $Q \in \cQ(\bd)$ corresponds to a (multi)digraph with $m$ vertices $m+k$ edges and degree sequence $\bd$.

As in the configuration model, each simple digraph with degree sequence $\bd$ is produced in precisely $\prod_{i=1}^m d_i^+! d_i^-!$ ways.
So if we restrict to simple preheart configurations, the digraphs we generate in this way are uniformly distributed.
Where in this case, simple means that there are no multiple edges or loops (however cycles of length $2$ are allowed).
We now count the number of preheart configurations.
Let $m' = m'(\bd) = |T(\bd)|$ be the number of vertices of the heart.
Then, we have the following
\begin{lemma}
\label{preheartconfigs}
 Let $\bd \in \cD$, then there are 
 $$
 \frac{m'(\bd)+k}{m+k} (m+k)!
 $$
 preheart configurations.
\end{lemma}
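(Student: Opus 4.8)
The plan is to count each preheart configuration as the pair $(H,q)$ it is by definition: a heart configuration $H$ on the vertex set $T=T(\bd)$ together with an assignment $q$ that distributes the $m-m'$ vertices of $[m]\sm T$ among the arcs of $H$ and linearly orders the vertices landing on each arc. So I would count the choices of $H$, then the choices of $q$ for a fixed $H$, and multiply; this is legitimate precisely because the number of choices of $q$ will turn out to depend only on $\bd$ (through $m'$), not on which $H$ was selected.

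The first ingredient is the number of arcs of the heart. Every vertex of $[m]\sm T$ has in- and out-degree exactly $1$, and the preheart has $m+k$ arcs in total, so $\sum_{i\in T}d_i^{+}=\sum_{i\in T}d_i^{-}=(m+k)-(m-m')=m'+k$; hence $H$ has $m'+k$ arcs. A heart configuration is exactly a perfect matching between the $m'+k$ out-stubs and the $m'+k$ in-stubs carried by the vertices of $T$ (these stubs being fixed labelled objects once $\bd$ is fixed), so there are precisely $(m'+k)!$ of them.

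Next, I would fix such an $H$ and count the admissible $q$. Distributing the $m-m'$ distinguishable vertices of $[m]\sm T$ over the $m'+k$ distinguishable arcs of $H$ and linearly ordering those on each arc is the same as writing $[m]\sm T$ as an ordered tuple of $m'+k$ linearly ordered lists, some possibly empty. One counts this by first arranging the $m-m'$ vertices in a row ($(m-m')!$ ways) and then cutting the row into the $m'+k$ lists by inserting $m'+k-1$ bars into the $m-m'+1$ gaps, with repetition allowed so that empty arcs are permitted ($\binom{m-m'+(m'+k-1)}{m'+k-1}=\binom{m+k-1}{m'+k-1}$ ways). Thus for each $H$ there are $(m-m')!\binom{m+k-1}{m'+k-1}=\frac{(m+k-1)!}{(m'+k-1)!}$ choices of $q$, a quantity depending on $\bd$ only through $m'$. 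Multiplying the two counts gives
\[
(m'+k)!\cdot\frac{(m+k-1)!}{(m'+k-1)!}=(m'+k)\,(m+k-1)!=\frac{m'+k}{m+k}\,(m+k)!,
\]
as claimed. The only genuinely delicate point is the elementary "lists with repetition" count — in particular remembering that empty arcs must be allowed, so two bars may fall in the same gap — everything else being bookkeeping with stubs and factorials.
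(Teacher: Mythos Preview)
Your proof is correct and follows essentially the same two-stage structure as the paper: count heart configurations as $(m'+k)!$ matchings, then multiply by the number of ways to insert the remaining $m-m'$ vertices onto the $m'+k$ arcs, obtaining $\frac{(m+k-1)!}{(m'+k-1)!}$ for the second stage. The only cosmetic difference is in the second count: the paper inserts the path vertices one at a time (the $i$th insertion subdivides one of the $m'+k+i-1$ current arcs), whereas you use a permutation-plus-bars argument; both give the same product, and your explicit justification that the heart has $m'+k$ arcs is a nice addition.
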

\begin{proof}
 We first generate the heart, and as we are simply working with the configuration model for this part of the model, there are $(m'+k)!$ heart configurations.
 The assignment of vertices in $[m]\sm T$ to the arcs of the heart $H$ may be done one vertex at a time by subdividing any already present edge and maintaining orientation.
 In this way when we add the $i$th vertex in this stage, there are $m'+k+i-1$ choices for the edge we subdivide.
 We must add $m-m'$ edges in this stage and so there are
 $$
 \prod_{i=1}^{m-m'} m'+k+i-1 = \frac{(m+k-1)!}{(m'+k-1)!}
 $$
 unique ways to create a preheart configuration from any given heart.
 Multiplying the number of heart configurations by the number of ways to create a preheart configuration from a given heart yields the desired result.
\end{proof}
The next stage is to pick the degree sequence, $\bd \in \cD$ at random.
We do this by choosing the degrees to be independent and identically distributed zero-truncated Poisson random variables with mean $\lambda>0$.
That is, $d_i^+ \sim TP(\lambda)$ and $d_i^- \sim TP(\lambda)$ such that the family $\{ d_i^+, d_i^- : i \in [m] \}$ is independent.
Note that this may not give a degree sequence at all, or it may be the degree sequence of a digraph with the wrong number of edges.
Thus we define the event $\Sigma(\lambda)$ to be the event that
$$ 
\sum_{i=1}^m d_i^+ = \sum_{i=1}^m d_i^- = m +k.
$$
We shall now prove the following bound,
\begin{lemma}
\label{ymkbound2}
 For any $\lambda>0$ we have 
\begin{equation}
\label{eq:complexbound}
 Y(m,k) \leq \frac{3k (m+k-1)! (e^{\lambda}-1)^{2m}}{\lambda^{2(m+k)}}  \bP(\Sigma(\lambda)).
\end{equation}
\end{lemma}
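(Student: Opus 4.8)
The plan is to recognise $Y(m,k)$ as an over-count of simple prehearts produced by the preheart configuration model, and then to convert the resulting sum over degree sequences into the probability $\bP(\Sigma(\lambda))$.

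\textbf{Step 1 (every relevant digraph is a simple preheart).} Let $D$ be a strongly connected digraph with $m$ vertices and excess $k\ge 1$; since it has $m+k>m$ edges it is not a single cycle, hence has no cycle component, and every vertex lies on a directed cycle so $D$ has minimum semi-degree at least $1$. Thus $D$ is a simple preheart with some degree sequence $\bd=\bd(D)\in\cD$. Since every simple preheart with degree sequence $\bd$ arises as a simple preheart configuration, and each such digraph is produced by exactly $\prod_{i=1}^m d_i^+!\,d_i^-!$ preheart configurations, while by Lemma~\ref{preheartconfigs} there are $\tfrac{m'(\bd)+k}{m+k}(m+k)!$ preheart configurations with degree sequence $\bd$ in total, the number of strongly connected digraphs with $m$ vertices, excess $k$ and degree sequence $\bd$ is at most
\[
 \frac{1}{\prod_{i=1}^m d_i^+!\,d_i^-!}\cdot\frac{m'(\bd)+k}{m+k}\,(m+k)!.
\]

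\textbf{Step 2 (the factor $3k$).} I would bound $m'(\bd)\le 2k$. Summing semi-degrees gives $\sum_{i=1}^m (d_i^+ + d_i^-) = 2(m+k)$; the $m-m'(\bd)$ vertices outside $T(\bd)$ contribute exactly $2$ each, and each of the $m'(\bd)$ vertices of $T(\bd)$ contributes at least $3$, so $3m'(\bd)\le 2(m+k)-2\bigl(m-m'(\bd)\bigr) = 2m'(\bd)+2k$, i.e. $m'(\bd)\le 2k$ and hence $m'(\bd)+k\le 3k$. The bound of Step 1 therefore becomes $\tfrac{3k\,(m+k-1)!}{\prod_{i=1}^m d_i^+!\,d_i^-!}$, and summing over all $\bd\in\cD$,
\[
 Y(m,k) \le 3k\,(m+k-1)!\sum_{\bd\in\cD}\prod_{i=1}^m\frac{1}{d_i^+!\,d_i^-!}.
\]

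\textbf{Step 3 (identify the sum with $\bP(\Sigma(\lambda))$).} As $\{d_i^+,d_i^-:i\in[m]\}$ are i.i.d.\ $TP(\lambda)$ and on $\cD$ both degree sums equal $m+k$,
\[
 \bP(\Sigma(\lambda)) = \sum_{\bd\in\cD}\prod_{i=1}^m \frac{\lambda^{d_i^+}}{d_i^+!\,(e^\lambda-1)}\cdot\frac{\lambda^{d_i^-}}{d_i^-!\,(e^\lambda-1)} = \frac{\lambda^{2(m+k)}}{(e^\lambda-1)^{2m}}\sum_{\bd\in\cD}\prod_{i=1}^m\frac{1}{d_i^+!\,d_i^-!}.
\]
Solving for the sum and substituting into the inequality of Step 2 gives exactly~(\ref{eq:complexbound}).

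\textbf{Expected main obstacle.} The computation itself is short; the delicate points are structural rather than calculational — checking that a strongly connected digraph of positive excess is genuinely a simple preheart (so that it is indeed enumerated by the model) and using the exact configuration-model multiplicity $\prod d_i^+!\,d_i^-!$ from the previous discussion — together with the single quantitative estimate $m'(\bd)\le 2k$, which is what upgrades the trivial factor $m+k$ to the needed $3k$.
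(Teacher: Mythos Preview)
Your proof is correct and follows essentially the same approach as the paper: bound $Y(m,k)$ by the number of simple prehearts via Lemma~\ref{preheartconfigs} and the configuration-model multiplicity $\prod d_i^+!\,d_i^-!$, use $m'(\bd)\le 2k$ to replace $m'(\bd)+k$ by $3k$, and then identify $\sum_{\bd\in\cD}\prod (d_i^+!\,d_i^-!)^{-1}$ with $\tfrac{(e^\lambda-1)^{2m}}{\lambda^{2(m+k)}}\bP(\Sigma(\lambda))$. If anything, your write-up is slightly more careful than the paper's in two places --- you justify explicitly that a strongly connected digraph of positive excess is a preheart (the paper simply asserts ``any strongly connected digraph is a preheart''), and you give the degree-sum argument for $m'(\bd)\le 2k$ rather than stating it without proof.
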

\begin{proof}
 Let $\bD$ be the random degree sequence generated as above and $\bd \in \cD$, then
 \begin{equation}
 \label{eq:pDd}
 \bP(\bD =\bd) = \prod_{i=1}^m \frac{\lambda^{d_i^+}}{d_i^+!(e^\lambda -1)}\frac{\lambda^{d_i^-}}{d_i^-!(e^\lambda -1)} = \frac{\lambda^{2(m+k)}}{(e^\lambda -1)^{2m}} \prod_{i=1}^m \frac{1}{d_i^+! d_i^-!}.
 \end{equation}
 By definition of $\Sigma(\lambda)$, we have
 $$
 \sum_{\bd \in \cD} \bP(\bD=\bd) = \bP(\Sigma(\lambda)),
 $$
as all of the above events are disjoint.
Thus, we may rearrange~(\ref{eq:pDd}) to deduce that
\begin{equation}
\label{eq:ssig}
 \sum_{\bd \in \cD} \prod_{i=1}^m \frac{1}{d_i^+! d_i^-!} = \frac{(e^\lambda-1)^{2m}}{\lambda^{2(m+k)}}\bP(\Sigma(\lambda)).
\end{equation}
Lemma~\ref{preheartconfigs} tells us that for a given degree sequence $\bd$, there are
 $$
 \frac{m'(\bd)+k}{m+k} (m+k)!
 $$
 preheart configurations.
As each simple digraph with degree sequence $\bd$ comes from precisely $\prod_{i=1}^m d_i^+! d_i^-!$ configurations, and $m'(\bd) \leq 2k$ as otherwise the excess would be larger than $k$, we can deduce that the total number of prehearts with $m$ vertices and excess $k$ is
\begin {equation}
\label{eq:sumbound}
\sum_{\bd \in \cD} \frac{m'(\bd)+k}{m+k} (m+k)! \prod_{i=1}^m \frac{1}{d_i^+! d_i^-!} \leq \sum_{\bd \in \cD} (m+k)! \frac{3k}{m+k} \prod_{i=1}^m \frac{1}{d_i^+! d_i^-!}.
\end {equation}
Note that any strongly connected digraph is a preheart and so~(\ref{eq:sumbound}) is also an upper bound for $Y(m,k)$.
Finally, combining~(\ref{eq:ssig}) and~(\ref{eq:sumbound}) yields the desired inequality.
\end{proof}
It remains to prove that~(\ref{eq:complexbound}) can be bounded from above by~(\ref{eq:simplebound}).
To this end, we prove the following upper bound on $\bP(\Sigma(\lambda))$.
\begin{lemma}
\label{sigmaubound}
For $\lambda <1$,
$$
 \bP(\Sigma(\lambda)) \leq \frac{147}{\lambda m}.
$$
\end{lemma}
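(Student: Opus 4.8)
The plan is to reduce the claim to an anti-concentration estimate for a sum of independent zero-truncated Poisson variables and to prove that estimate by Fourier inversion. \emph{Reduction.} Since $\{d_i^+\}_{i\in[m]}$ and $\{d_i^-\}_{i\in[m]}$ are two independent families of i.i.d.\ $TP(\lambda)$ random variables, the events $\{\sum_i d_i^+=m+k\}$ and $\{\sum_i d_i^-=m+k\}$ are independent and identically distributed, so $\bP(\Sigma(\lambda))=\bP(T=m+k)^2$, where $T=\sum_{i=1}^m X_i$ with $X_1,\dots,X_m$ i.i.d.\ $TP(\lambda)$. It therefore suffices to bound $\sup_j\bP(T=j)$; I will aim at $\bP(T=j)\le\tfrac12\sqrt{\pi/(\lambda m)}$ for every $j$, which squares to $\pi/(4\lambda m)\le 147/(\lambda m)$ with room to spare.

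\emph{Fourier inversion.} As $T$ is integer-valued with characteristic function $\phi(t)^m$, where $\phi(t)=\bE[e^{\mathrm{i}tX_1}]=\frac{e^{\lambda e^{\mathrm{i}t}}-1}{e^\lambda-1}$ and $\mathrm{i}$ is the imaginary unit, we have $\bP(T=j)=\frac1{2\pi}\int_{-\pi}^\pi\phi(t)^m e^{-\mathrm{i}jt}\,\mathrm{d}t$, hence $\bP(T=j)\le\frac1{2\pi}\int_{-\pi}^\pi|\phi(t)|^m\,\mathrm{d}t$. Everything now hinges on a clean pointwise bound for $|\phi(t)|$.

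\emph{The pointwise bound.} Applying $e^z-1=2e^{z/2}\sinh(z/2)$ to numerator and denominator gives $|\phi(t)|=e^{-\lambda(1-\cos t)/2}\cdot\frac{|\sinh(\lambda e^{\mathrm{i}t}/2)|}{\sinh(\lambda/2)}$. Since $|\sinh(x+\mathrm{i}y)|^2=\sinh^2 x+\sin^2 y$, taking $x=\tfrac{\lambda}{2}\cos t$ and $y=\tfrac{\lambda}{2}\sin t$ and using $\sin^2 y\le y^2$ reduces matters to the elementary inequality
\[
\sinh^2(au)+a^2(1-u^2)\le\sinh^2 a\qquad(u\in[-1,1],\ a>0),
\]
which holds because $g(u):=\sinh^2(au)+a^2(1-u^2)$ satisfies $g''(u)=2a^2(\cosh(2au)-1)\ge0$ and $g'(0)=0$, so $g$ is convex with minimum at $u=0$ and thus attains its maximum on $[-1,1]$ at an endpoint, where $g(\pm1)=\sinh^2 a$. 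Hence $|\phi(t)|\le e^{-\lambda(1-\cos t)/2}$. I expect this step to be the main obstacle: one needs a form of the characteristic-function estimate that is simultaneously clean and strong enough to integrate to the right order. The $\sinh$-factorisation together with the convexity trick is what makes it go through; alternatively one could invoke a Kolmogorov--Rogozin-type concentration inequality or a local limit theorem, at the cost of a non-explicit constant.

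\emph{Conclusion.} Combining the previous two steps with the bound $1-\cos t\ge 2t^2/\pi^2$ on $[-\pi,\pi]$ and then extending the Gaussian integral to $\bR$,
\[
\bP(T=j)\le\frac1{2\pi}\int_{-\pi}^\pi e^{-m\lambda(1-\cos t)/2}\,\mathrm{d}t\le\frac1{2\pi}\int_{\bR}e^{-m\lambda t^2/\pi^2}\,\mathrm{d}t=\frac{\sqrt\pi}{2\sqrt{\lambda m}},
\]
and squaring yields $\bP(\Sigma(\lambda))\le\pi/(4\lambda m)\le 147/(\lambda m)$. (The argument in fact uses nothing about $\lambda<1$, and the large slack in the constant means a much cruder pointwise bound than the one above would already suffice.)
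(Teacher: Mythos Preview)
Your proof is correct and takes a genuinely different route from the paper. The paper normalises the truncated Poisson summands, computes the first three central moments, and feeds them into the Berry--Esseen inequality to get $\bP(T=m+k)\le 7\sqrt{3}/\sqrt{\lambda m}$; squaring produces the constant $147$. You instead bound the point probability directly by Fourier inversion, using the neat factorisation $e^z-1=2e^{z/2}\sinh(z/2)$ and the convexity argument for $g(u)=\sinh^2(au)+a^2(1-u^2)$ to obtain the sharp pointwise estimate $|\phi(t)|\le e^{-\lambda(1-\cos t)/2}$. This is more self-contained (no appeal to a quantitative CLT as a black box), yields a dramatically better constant ($\pi/4$ in place of $147$), and, as you observe, does not actually require $\lambda<1$. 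The paper's approach has the virtue of being routine once one decides to use Berry--Esseen, at the cost of a somewhat fiddly third-moment computation; your approach trades that for the one genuinely clever step, the $\sinh$ inequality, which you handle cleanly.
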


For the proof of this lemma, we will use the Berry-Esseen inequality for normal approximation (see for example~\cite[Section XX.2]{venkatesh2013theory}.)
\begin{lemma}
\label{berryesseen}
Suppose $X_1,X_2,\ldots, X_n$ is a sequence of independent random variables from a common distribution with zero mean, unit variance and third absolute moment $\bE|X|^3 = \gamma < \infty$.
Let $S_n = X_1+X_2+\ldots+X_n$ and let $G_n$ be the cumulative distribution function of $S_n/\sqrt{n}$.
Then for each $n$ we have
\begin{equation}
\label{tyurinbound}
 \sup_{t \in \bR} |G_n(t) - \Phi(t)| \leq \frac{\gamma}{2 \sqrt{n}},
\end{equation}
where $\Phi$ is the cumulative distribution function of the standard gaussian.
\end{lemma}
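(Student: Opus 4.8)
Since this is a classical result stated here only as a tool, the natural route is the Fourier-analytic (``smoothing'') proof, with the caveat that obtaining the explicit factor $\tfrac12$ is a quantitatively delicate matter that in the end is best attributed to \cite{venkatesh2013theory}. Write $f(t)=\bE[e^{itX}]$ for the common characteristic function, so that $S_n/\sqrt n$ has characteristic function $f(t/\sqrt n)^n$, while $\Phi$ has characteristic function $e^{-t^2/2}$ and density bounded by $(2\pi)^{-1/2}$. The first step is Esseen's smoothing inequality, applied with $\Phi$ in the role of the smooth reference distribution and $G_n$ in the role of the target: for every $T>0$,
$$\sup_{t\in\bR}|G_n(t)-\Phi(t)| \;\le\; \frac1\pi\int_{-T}^{T}\left|\frac{f(t/\sqrt n)^n-e^{-t^2/2}}{t}\right|\,dt \;+\; \frac{c_0}{T},$$
where $c_0$ is an absolute constant coming from the bound on the density of $\Phi$. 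This reduces the whole problem to estimating the difference of characteristic functions on an interval of length $2T$.

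The second step is a Taylor expansion of $f$. From $|e^{ix}-1-ix+\tfrac{x^2}{2}|\le\tfrac{|x|^3}{6}$ together with $\bE X=0$ and $\bE X^2=1$ we get $f(s)=1-\tfrac{s^2}{2}+\rho(s)$ with $|\rho(s)|\le\tfrac{\gamma}{6}|s|^3$; and since Jensen gives $\gamma\ge(\bE X^2)^{3/2}=1$, this also yields $|f(s)|\le e^{-s^2/3}$ for $|s|\le c_1/\gamma$. Feeding this into the elementary inequality $|a^n-b^n|\le n|a-b|\max(|a|,|b|)^{n-1}$ (valid for complex $a,b$), with $a=f(t/\sqrt n)$ and $b=e^{-t^2/(2n)}$, one obtains, for $|t|\le c_1\sqrt n/\gamma$,
$$\bigl|f(t/\sqrt n)^n-e^{-t^2/2}\bigr|\;\le\; C\,\frac{\gamma\,|t|^3}{\sqrt n}\,e^{-t^2/4}.$$
Choosing $T=c_1\sqrt n/\gamma$, the main integral is then at most $\frac{C\gamma}{\pi\sqrt n}\int_{\bR}t^2 e^{-t^2/4}\,dt=O(\gamma/\sqrt n)$, while the smoothing remainder is $c_0/T=O(\gamma/\sqrt n)$ because $\gamma\ge1$; adding the two gives $\sup_t|G_n(t)-\Phi(t)|\le C'\gamma/\sqrt n$ for an absolute constant $C'$, and for the few $n$ with $C'\gamma/\sqrt n\ge 1$ the claim is trivial since both sides of the inequality lie in $[0,1]$.

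The main obstacle is precisely the last sentence: bringing the constant down from the crude $C'$ produced by this argument to the stated $\tfrac12$. Esseen's smoothing lemma in the form used above is lossy, and the sharp value requires either an optimized smoothing inequality built from a carefully chosen kernel with every constant tracked (the route of Tyurin), or a Stein's-method argument via the zero-bias transform. Since only the stated form is needed in this paper, one may either carry out this constant optimisation following Tyurin or simply invoke \cite{venkatesh2013theory}; the qualitative Fourier argument above is what I would write out in detail, quoting the explicit constant from the reference.
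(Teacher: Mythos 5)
The paper does not prove this lemma at all: it is quoted as a classical result, with the reference \cite[Section XX.2]{venkatesh2013theory} for the theorem and Tyurin~\cite{tyurin2010refinement} for the explicit constant $\tfrac12$. Your Fourier/smoothing sketch is a correct outline of the standard proof up to an unspecified absolute constant, and your deferral to Tyurin for the sharp constant is exactly the paper's own treatment, so there is nothing to reconcile.
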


Here, the explicit constant $1/2$ in equation~(\ref{tyurinbound}) was obtained by Tyurin~\cite{tyurin2010refinement}.
\begin{proof}[Proof of Lemma~\ref{sigmaubound}]
The in-degrees of the random degree sequence are chosen independently from a truncated poisson distribution with parameter $\lambda$.
Thus, we want to apply Lemma~\ref{berryesseen} to the sum $S_m = Y_1+Y_2+\ldots+Y_m$ where the $Y_i$ are normalised truncated Poisson random variables.
So all we must compute are the first three central moments of the truncated poisson distribution.
Let $Y \sim TP(\lambda)$, one can easily compute that $\bE(Y) = c_\lambda = \frac{\lambda e^\lambda}{e^\lambda - 1}$ and $\Var(Y) =\sigma_\lambda^2= c_\lambda(1+\lambda - c_\lambda)$.
Note that for $\lambda <1$ we have $1< c_\lambda<2$ and so as $Y$ only takes integer values which are at least $1$, $\bE|Y-\bE(Y)|^3 = \bE(Y-c_\lambda)^3 + 2(c_\lambda - 1)^3 \bP(Y=1)$.
Computing this yields
\begin{equation*}
\bE|Y-\bE(Y)|^3  = \lambda + \frac{2 \lambda^4 - 5 \lambda^3 + 3 \lambda^2 - \lambda}{e^\lambda - 1} +
\frac{3 (2 \lambda^4 - 3 \lambda^3 + \lambda^2)}{(e^\lambda - 1)^2} +
\frac{2 (3 \lambda^4 - 2 \lambda^3)}{(e^\lambda - 1)^3} +
\frac{2 \lambda^4}{(e^\lambda - 1)^4}
\end{equation*}
One can check that this is bounded above by $2\lambda$ for $\lambda<1$.

The normalised version of $Y$ is $X = (Y - c_\lambda)/\sigma_\lambda$.
We have
$$
\bE|X|^3 = \bE\bigg|\frac{Y-c_\lambda}{\sigma_\lambda}\bigg|^3 = \frac{1}{\sigma_\lambda^3}\bE|Y-c_\lambda|^3 \leq \frac{2\lambda}{\sigma_\lambda^3} = \gamma.
$$
For $\lambda<1$ one can check $c_\lambda < 1+ 2\lambda/3$, which allows us to deduce that $\sigma_\lambda^2 > \lambda/3$ (also using $Y \geq 1$).
Hence, $\bE|X|^3 \leq 6\sqrt{3} \lambda^{-1/2}$.
Substituting into Lemma~\ref{berryesseen} with $G_m$ the distribution of $S_m/\sqrt{m}$, 
$$
\sup_{t \in \bR} |G_m(t) - \Phi(t)| \leq \frac{3\sqrt{3} }{\sqrt{\lambda m}}.
$$
The probability that the sum of the in-degrees is $m+k$ is precisely 
$$
G_m\bigg(\frac{m+k-m c_\lambda}{\sigma_\lambda \sqrt{m}}\bigg)-G_m\bigg(\frac{m+k-1-m c_\lambda}{\sigma_\lambda \sqrt{m}}\bigg).
$$
Following an application of the triangle inequality, we see that this probability is bounded above by
$$
\frac{6\sqrt{3}}{\sqrt{\lambda m}} + \frac{1}{\sqrt{2 \pi m} \sigma_\lambda} \leq \frac{7\sqrt{3}}{\sqrt{\lambda m}}.
$$
As the event that the in-degrees sum to $m+k$ and the event that the out-degrees sum to $m+k$ are independent and identically distributed events, we may deduce the bound,
$$
\bP(\Sigma(\lambda)) \leq \frac{147}{\lambda m}.
$$
\end{proof}
Finally, we may prove Lemma~\ref{sboundynk}.
\begin{proof}[Proof of Lemma~\ref{sboundynk}]
We choose $\lambda = 2k/m<1$ by assumption, then $\bP(\Sigma(\lambda)) \leq 147/2k$ by Lemma~\ref{ymkbound2}.
Combining this with Lemma~\ref{sigmaubound} yields
\begin{equation}
Y(m,k) \leq \frac{441 (m+k-1)!}{2} \lambda^{-2k} \bigg( \frac{e^\lambda -1}{\lambda} \bigg)^{2m} \leq \frac{441 m! m^{3k-1} e^{k^2/m}}{(2k)^{2k}}\bigg( \frac{e^\lambda -1}{\lambda} \bigg)^{2m}
\end{equation}
We use the inequality $e^x \leq 1+x+x^2/2+x^3/4$ which holds for all $0 \leq x \leq 1$ to bound $(e^\lambda -1)/\lambda \leq 1 + \lambda/2 + \lambda^2/4$.
Thus, 
$$
((e^\lambda -1)/\lambda)^{2m} \leq (1 + \lambda/2 + \lambda^2/4)^{2m} \leq e^{m\lambda + m\lambda^2/2} = e^{2k + 2k^2/m}.
$$
Then, we can use Stirling's inequality, $e \sqrt{2k-1} (2k-1)^{2k-1} e^{-2k+1} \geq (2k-1)!$, so that
$$
\frac{e^{2k}}{(2k)^{2k}} \leq \frac{e^{2k}}{(2k-1)^{2k-1/2}} \leq \frac{e^2}{(2k-1)!},
$$
allowing us to rewrite the bound on $Y(m,k)$ as
$$
Y(m,k) \leq \frac{441e^3}{2} \frac{ m! m^{3k-1}}{(2k-1)!},
$$
where we used $e^{k^2/m} \leq e^{1/3}$.
Thus proving the lemma with $C = 441e^3/2$.
\end{proof}

\section{Proof of Theorem~\ref{mainlb}}
\label{sec:lbound}
In this section we prove a lower bound on component sizes in \dnp. 
We give the proof for $p = 1/n$ for simplicity. 
The proof when $p = n^{-1} + \lambda n^{-4/3}$ is very similar, with more care taken in the approximation of terms involving $(np)^m$. See Section~\ref{sec:adaptations} for more details.
\begin{theorem}
\label{lbound}
Let $0<\delta<1/800$, then the probability that $D(n,1/n)$ has no component of size at least $\delta n^{1/3}$ is at most $2\delta^{1/2}$.
\end{theorem}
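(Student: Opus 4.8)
The plan is to exhibit, with probability at least $1-2\delta^{1/2}$, a directed cycle in $D(n,1/n)$ whose number of vertices lies in a window $[s,s']$ with $s\ge\delta n^{1/3}$. This suffices because the vertex set of any directed cycle is strongly connected and therefore contained in a single strongly connected component, so a directed cycle on $\ell\ge\delta n^{1/3}$ vertices forces $|\cC_1|\ge\delta n^{1/3}$. Concretely I would set $s=\lceil\delta n^{1/3}\rceil$ and $s'=\lfloor\delta^{-1/2}s\rfloor$; then the event $\{|\cC_1|<\delta n^{1/3}\}$ is contained in the event $B$ that $D(n,1/n)$ has no directed cycle with between $s$ and $s'$ vertices, and it is enough to prove $\bP(B)\le 2\delta^{1/2}$.

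To bound $\bP(B)$ I would apply Janson's inequality to the family of increasing events $\{A_\sigma\}$ indexed by the directed cycles $\sigma$ on the vertex set $\n$ with $s\le|\sigma|\le s'$, where $A_\sigma$ is the event that all arcs of $\sigma$ are present in $D(n,1/n)$; two events $A_\sigma,A_{\sigma'}$ are dependent exactly when $E(\sigma)\cap E(\sigma')\ne\emptyset$. Writing $\mu=\sum_\sigma\bP(A_\sigma)$ and $\Delta=\sum\bP(A_\sigma\cap A_{\sigma'})$, the latter sum over ordered pairs of distinct dependent indices, Janson's inequality gives $\bP(B)\le e^{-\mu+\Delta/2}$. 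The first moment is immediate: $\mu=\sum_{\ell=s}^{s'}\binom{n}{\ell}(\ell-1)!\,n^{-\ell}=\sum_{\ell=s}^{s'}\tfrac{1}{\ell}\prod_{i=0}^{\ell-1}(1-i/n)$, and since $\ell\le s'\le\delta^{1/2}n^{1/3}$ makes $\ell^2/n\to 0$, each product is $1-o(1)$ uniformly, whence $\mu\ge(1-o(1))\sum_{\ell=s}^{s'}\tfrac{1}{\ell}\ge(1-o(1))\log(s'/s)=(\tfrac{1}{2}-o(1))\log(1/\delta)$ and $e^{-\mu}\le(1+o(1))\delta^{1/2}$, the $o(1)$ vanishing as $n\to\infty$ for $\delta$ fixed.

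The substance is the bound on $\Delta$. As $\bP(A_\sigma\cap A_{\sigma'})=\bP(A_\sigma)\,n^{-|E(\sigma')\setminus E(\sigma)|}$, it suffices to show $\Xi(\sigma):=\sum_{\sigma'\ne\sigma,\ \sigma'\sim\sigma}n^{-|E(\sigma')\setminus E(\sigma)|}=O(\delta^{3/2})$ uniformly over cycles $\sigma$ of length at most $s'$, since then $\Delta\le 2\mu\max_\sigma\Xi(\sigma)=O(\delta^{3/2}\log(1/\delta))=o(1)$. To estimate $\Xi(\sigma)$ I would decompose each $\sigma'\ne\sigma$ with $E(\sigma')\cap E(\sigma)\ne\emptyset$ into its maximal arcs shared with $\sigma$ together with the complementary arcs of new edges. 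If $\sigma'$ has $r\ge1$ shared arcs then these are $r$ pairwise vertex-disjoint directed sub-paths of the cycle $\sigma$ (at most $2\binom{\ell}{2r}$ choices), arranged cyclically in $(r-1)!$ ways and joined by exactly $r$ arcs carrying $g\ge r$ new edges in total, whose $g-r$ internal vertices, together with the split of $g$ among the $r$ arcs, can be chosen in at most $\binom{g-1}{r-1}n^{g-r}$ ways. Weighting by $n^{-g}$, summing over $g$ with the hockey-stick identity and then over $r$, one obtains a bound of the shape $\sum_{r\ge1}c_r\bigl((s')^3/n\bigr)^r$ with $\sum_r c_r<\infty$; since $s'\le(1+o(1))\delta^{1/2}n^{1/3}$ gives $(s')^3/n\le(1+o(1))\delta^{3/2}$, this yields $\Xi(\sigma)=O(\delta^{3/2})$.

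Putting the pieces together, $\bP(B)\le e^{-\mu+\Delta/2}\le(1+o(1))\,\delta^{1/2}\,e^{o(1)}\le 2\delta^{1/2}$ once $\delta<1/800$ and $n$ is large in terms of $\delta$, with the constants very lossy. I expect the estimate on $\Delta$ to be the main obstacle: two long directed cycles may share their edges along an arbitrary union of arcs, and one must enumerate the ways in which $\sigma'$ weaves in and out of $\sigma$ without losing the decisive factor $(s')^3/n$. This is precisely where capping the cycle length at $s'\asymp\delta^{1/2}n^{1/3}$, rather than at the natural scale $n^{1/3}$, is essential: with the larger cutoff $\Delta$ would be of the same order as $\mu$ and Janson's inequality would be vacuous.
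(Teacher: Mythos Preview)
Your proposal is correct and follows the same overall strategy as the paper: apply Janson's inequality to the family of directed cycles with lengths in the window $[\delta n^{1/3},\delta^{1/2}n^{1/3}]$, compute $\mu\ge(\tfrac12-o(1))\log(1/\delta)$, and show that the correlation term is $o(1)$. The only genuine difference is in how you bound~$\Delta$.

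The paper does not enumerate intersecting pairs of cycles directly. Instead it observes that if two directed cycles share an edge then their union is a strongly connected digraph on some $m$ vertices with excess $k\ge 1$, and that any such digraph arises from at most $27^k$ ordered pairs of cycles (Lemma~\ref{zqk}, via the heart). This reduces the $\Delta$-sum to $\sum_{m,k}27^k\,\bE(Z(m,k))$, which is then controlled using the ear-decomposition bound $Y(m,k)\le (m+k)^k m^{2k}(m-1)!/k!$ from Lemma~\ref{uboundynk}. Your route is the classical ``arc decomposition'' of $\sigma'$ relative to $\sigma$: record the $r$ maximal shared sub-paths, their cyclic order in $\sigma'$, the composition of the $g$ new edges among the $r$ connecting arcs, and the $g-r$ internal vertices. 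Summing with hockey-stick gives $\Xi(\sigma)\le\sum_{r\ge1}c_r((s')^3/n)^r=O(\delta^{3/2})$, hence $\Delta=O(\delta^{3/2}\log(1/\delta))$, matching the paper's bound in strength.

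Your argument is more elementary and entirely self-contained for this lemma; it avoids both Lemma~\ref{uboundynk} and Lemma~\ref{zqk}. The paper's approach, on the other hand, amortises work: the bounds on $Y(m,k)$ are needed anyway for the upper-bound Theorem~\ref{mainub}, so routing $\Delta$ through $\bE(Z(m,k))$ costs nothing extra. One small point worth tightening in a full write-up: your count ``at most $2\binom{\ell}{2r}$'' for $r$ vertex-disjoint directed sub-paths of a cycle is correct as an upper bound but deserves a one-line justification, and you should note explicitly that internal vertices of the new arcs are allowed to lie in $V(\sigma)$ (your $n^{g-r}$ overcount already accommodates this).
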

To prove this we will bound from above the probability that there is no cycle of length between $\delta n^{1/3}$ and $\delta^{1/2}n^{1/3}$.
Let $X$ be the random variable counting the number of cycles in $D(n,1/n)$ of length between $\delta n^{1/3}$ and $\delta^{1/2}n^{1/3}$.
Note that we may decompose $X$ as a sum of dependent Bernoulli random variables, and thus we may apply Janson's Inequality in the following form (see~\cite[Theorem 2.18 (i)]{jlr}).
\begin{theorem}
\label{jansonineq}
Let $S$ be a set and $S_p \subseteq S$ chosen by including each element of $S$ in $S_p$ independently with probability $p$.
Suppose that $\cS$ is a family of subsets of $S$ and for $A \in  \cS$, we define $I_A$ to be the event $\{ A \subseteq S_p \}$.
Let $\mu = \bE(X)$ and
$$
\Delta = \frac{1}{2} \mathop{\sum \sum}_{A \neq B, A \cap B \neq \emptyset} \bE(I_A I_B)
$$
Then,
$$
\bP(X = 0) \leq e^{-\mu + \Delta}
$$
\end{theorem}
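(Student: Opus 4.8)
The plan is to prove Janson's inequality by the standard route: expand $\bP(X=0)$ as a telescoping product of conditional probabilities and bound each factor using only the fact that the events $I_A$ are increasing in the product space of the independent indicators $\ind[x\in S_p]$, $x\in S$ (so their complements are decreasing), together with the Harris inequality for product measures, which says that an increasing and a decreasing event are negatively correlated. Fix an arbitrary enumeration $\cS = \{A_1,\dots,A_m\}$, write $I_i := I_{A_i}$, $p_i := \bE(I_i)$, and let $\bar I_i$ denote the complementary event. If $\bP(\bar I_1 \cap \dots \cap \bar I_{i-1}) = 0$ for some $i$ then $\bP(X=0) = 0$ and there is nothing to prove, so assuming these probabilities positive I would start from
$$
\bP(X = 0) = \prod_{i=1}^{m} \bP\bigl(\bar I_i \mid \bar I_1 \cap \dots \cap \bar I_{i-1}\bigr).
$$

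The core estimate, for each fixed $i$, is
$$
\bP\bigl(I_i \mid \bar I_1 \cap \dots \cap \bar I_{i-1}\bigr) \;\ge\; p_i - \sum_{\substack{j<i\\ A_j \cap A_i \neq \emptyset}} \bE(I_i I_j).
$$
To obtain it I would split $\{1,\dots,i-1\}$ into the ``dependent'' block $D_i = \{j<i : A_j \cap A_i \ne \emptyset\}$ and its complement, and set $\cF := \bigcap_{j<i,\, j\notin D_i}\bar I_j$ and $\cG := \bigcap_{j\in D_i}\bar I_j$. Using $\bP(\cG\mid\cF)\le 1$,
$$
\bP(I_i \mid \cF\cap\cG) = \frac{\bP(I_i\cap\cG\mid\cF)}{\bP(\cG\mid\cF)} \ge \bP(I_i\cap\cG\mid\cF) = \bP(I_i\mid\cF) - \bP(I_i\cap\bar\cG\mid\cF).
$$
Then I would invoke three facts: (i) $I_i$ depends only on coordinates in $A_i$, while $\cF$ depends only on coordinates indexed by sets $A_j$ with $j\notin D_i$, each of which is disjoint from $A_i$ by definition of $D_i$; hence $I_i$ and $\cF$ are independent and $\bP(I_i\mid\cF)=p_i$. (ii) By de Morgan $\bar\cG = \bigcup_{j\in D_i} I_j$, so a union bound gives $\bP(I_i\cap\bar\cG\mid\cF) \le \sum_{j\in D_i}\bP(I_i\cap I_j\mid\cF)$. (iii) $I_i\cap I_j$ is an increasing event and $\cF$ a decreasing event, so Harris' inequality yields $\bP(I_i\cap I_j\mid\cF)\le\bP(I_i\cap I_j)=\bE(I_iI_j)$. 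Combining (i)--(iii) gives the core estimate.

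Plugging in, for each $i$ I obtain $\bP(\bar I_i\mid\bar I_1\cap\dots\cap\bar I_{i-1}) \le 1 - p_i + \sum_{j\in D_i}\bE(I_iI_j) \le \exp\bigl(-p_i + \sum_{j\in D_i}\bE(I_iI_j)\bigr)$ via $1+x\le e^x$. Taking the product over $i$, I would note that $\sum_i p_i = \bE(X) = \mu$ and that each unordered intersecting pair $\{A,B\}$ with $A\ne B$ contributes $\bE(I_AI_B)$ exactly once to $\sum_i\sum_{j\in D_i}\bE(I_iI_j)$, so this double sum equals $\Delta$; hence $\bP(X=0)\le e^{-\mu+\Delta}$, as claimed.

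I expect the one genuinely delicate point to be the correlation step (iii): one must correctly identify $\cF$ as a decreasing event (an intersection of the decreasing events $\bar I_j$) and $I_i\cap I_j$ as increasing, so that Harris/FKG is applied in the direction that \emph{decreases} the probability of the increasing event. The apparently wasteful move $\bP(\cG\mid\cF)\le 1$ is exactly what makes the argument work: it isolates the dependent block $\cG$, whose index sets may overlap $A_i$ in uncontrolled ways, so that the remaining estimates can be made conditionally on the benign event $\cF$, for which both independence and anti-correlation are available. Everything else --- the chain rule, the union bound, $1+x\le e^x$, and matching the pair-sum to the definition of $\Delta$ --- is routine bookkeeping.
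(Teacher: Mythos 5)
Your proof is correct. Note that the paper does not prove this statement at all --- it is quoted verbatim from Janson, \L{}uczak and Ruci\'nski (Theorem 2.18(i) of their book) and used as a black box --- so there is nothing in the source to compare against; what you have written is the standard Boppana--Spencer argument (chain rule over $\bP(\bar I_i \mid \bar I_1\cap\dots\cap\bar I_{i-1})$, splitting off the dependent block, independence of $I_i$ from $\cF$, a union bound over $\bar\cG$, and Harris/FKG to decouple $I_i\cap I_j$ from the decreasing event $\cF$), and every step, including the final bookkeeping identifying the ordered sum over $j\in D_i$, $j<i$ with the unordered-pair sum $\Delta$, checks out.
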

To apply Theorem~\ref{jansonineq}, we define $S$ to be the set of edges of the complete digraph on $n$ vertices. Let $A \in \cS$ if and only if $A \subseteq S$ is the set of edges of a cycle of length between $\delta n^{1/3}$ and $\delta^{1/2}n^{1/3}$.
Define $X(m)$ to be the number cycles in $D(n, 1/n)$ of length $m$.
We start by approximating the first moment of $X$.
\begin{lemma}
\label{lem:mu}
 $\bE(X) \geq \log(1/\delta)/2$
\end{lemma}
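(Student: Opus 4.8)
The plan is a straightforward first-moment calculation for the cycle-count random variable $X$. Write $X=\sum_{m}X(m)$, where the sum ranges over integers $m$ with $\delta n^{1/3}\le m\le \delta^{1/2}n^{1/3}$ and $X(m)$ is the number of directed cycles of length exactly $m$ in $D(n,1/n)$; then $\bE(X)=\sum_m\bE(X(m))$ by linearity. The first step is to compute $\bE(X(m))$: the number of directed $m$-cycles on $n$ labelled vertices is $n(n-1)\cdots(n-m+1)/m$ (choose an ordered $m$-tuple of distinct vertices, then divide by the $m$ rotations), and each such cycle uses exactly $m$ of the $n(n-1)$ possible arcs, hence is present with probability $p^m=n^{-m}$. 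Therefore
$$
\bE(X(m))=\frac{1}{m}\prod_{j=0}^{m-1}\Bigl(1-\frac{j}{n}\Bigr).
$$

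Next I would lower-bound the product. Since $\delta<1$ we have $m\le \delta^{1/2}n^{1/3}<n^{1/3}$ throughout the relevant range, so by $\prod_j(1-x_j)\ge 1-\sum_j x_j$ we get $\prod_{j=0}^{m-1}(1-j/n)\ge 1-\binom{m}{2}/n\ge 1-m^2/(2n)\ge 1-\tfrac12 n^{-1/3}$, uniformly in $m$. Hence $\bE(X)\ge (1-\tfrac12 n^{-1/3})\sum_{\delta n^{1/3}\le m\le \delta^{1/2}n^{1/3}}\tfrac1m$. The remaining harmonic-type sum is estimated from below by $\sum_{m=a}^{b}\tfrac1m\ge\int_a^{b+1}\tfrac{dx}{x}=\log\tfrac{b+1}{a}$, with $a=\lceil\delta n^{1/3}\rceil\le \delta n^{1/3}+1$ and $b+1=\lfloor\delta^{1/2}n^{1/3}\rfloor+1\ge\delta^{1/2}n^{1/3}$, which gives $\sum\ge\log\frac{\delta^{1/2}n^{1/3}}{\delta n^{1/3}+1}=\tfrac12\log(1/\delta)-\log\!\bigl(1+\tfrac{1}{\delta n^{1/3}}\bigr)$. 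For $n$ large relative to $\delta$, both the factor $1-\tfrac12 n^{-1/3}$ and the correction $\log(1+\tfrac{1}{\delta n^{1/3}})$ become negligible, yielding $\bE(X)\ge\tfrac12\log(1/\delta)$ as claimed.

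There is no real conceptual obstacle here; this is a routine first-moment bound, and the mild care required is purely in the bookkeeping of lower-order corrections — the ratio between the falling factorial $(n)_m$ and $n^m$, and the integer rounding of the cycle-length endpoints — together with the verification that, once $n$ is sufficiently large relative to $\delta$, these corrections do not encroach on the clean bound $\tfrac12\log(1/\delta)$. (Note also that the upper cutoff $\delta^{1/2}n^{1/3}$ is irrelevant to making $\bE(X)$ large; it is imposed only to control the quantity $\Delta$ in the subsequent application of Janson's inequality.)
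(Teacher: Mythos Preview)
Your proof is correct and follows essentially the same approach as the paper: decompose $X=\sum_m X(m)$, compute $\bE(X(m))=(n)_m/(m\,n^m)$, and lower-bound the resulting harmonic-type sum by an integral to obtain $\tfrac12\log(1/\delta)$. You are in fact more careful than the paper, which simply writes $\bE(X(m))\ge 1/m$ without addressing the ratio $(n)_m/n^m$; both arguments ultimately deliver only $\bE(X)\ge \tfrac12\log(1/\delta)-o_n(1)$ for $n$ large relative to $\delta$, which is what is actually used in the subsequent Janson bound.
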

\begin{proof}
 Let $a = \delta n^{1/3}$ and $b = \delta^{1/2}n^{1/3}$. Then, we can write $X$ as
 $$
 X = \sum_{m=a}^b X(m)
 $$

 Note that 
 \begin{equation}
 \bE(X(m)) = \binom{n}{m} \frac{m!}{m} p^m \geq \frac{1}{m}
 \label{eq:Xmbound}
 \end{equation}
 
 So, we may bound the expectation of $X$ as follows

 $$
 \bE(X) = \sum_{m=a}^b \bE(X(m)) \geq \sum_{m=a}^b \frac{1}{m} \geq \int_a^b \frac{dx}{x} = \frac{\log(1/\delta)}{2}
 $$
\end{proof}
Let $Z(m,k)$ be the random variable counting the number of strongly connected graphs with $m$ vertices and excess $k$ in $D(n,1/n)$.
Directly computing $\Delta$ is rather complicated so we will instead compute an upper bound on $\Delta$ that is a linear combination of the first moments of the random variables $Z(m,k)$ for $m \geq a$ and $k \geq 1$.
To move from the computation of $\Delta$ to the first moments of $Z(m,k)$ we use the following lemma,
\begin{lemma}
\label{zqk}
Each strongly connected digraph $D$ with excess $k$ may be formed in at most $27^k$ ways as the union of a pair of directed cycles $C_1$ and $C_2$.
\end{lemma}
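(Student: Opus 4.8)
The plan is to pass to the \emph{heart} $H = H(D)$ of $D$, which is the multidigraph obtained from $D$ by suppressing every vertex of in-degree and out-degree exactly $1$. Note that for $k\ge 1$ a strongly connected $D$ is a preheart (it has minimum semi-degree at least $1$ and, being connected, cannot have a cycle component without being a single cycle), so $H$ is well defined, and $D$ is recovered from $H$ by subdividing each arc of $H$ into an internally disjoint directed path; thus $E(D)$ is partitioned into the subdivision paths $P_e$, one for each arc $e \in E(H)$. (The degenerate case $k=0$ is trivial: then $D$ is itself a cycle and the only pair is $(D,D)$.)

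First I would check that projecting to the heart injects the pairs we wish to count into pairs of simple cycles covering $H$. Given a directed cycle $C$ of $D$ that meets $V(H)$, reading off the vertices of $H$ met along $C$ in cyclic order yields a simple cycle $\gamma(C)$ of $H$: since a suppressed vertex has a unique out-arc, once $C$ enters a subdivision path $P_e$ it is forced to traverse it in full, and since $C$ is simple it visits each vertex of $H$ at most once. Conversely $C$ is recovered from $\gamma(C)$ by replacing each arc $e$ of $\gamma(C)$ with $P_e$, so the map $C \mapsto \gamma(C)$ is injective. Because every arc of $D$ lies on exactly one $P_e$, we have $E(C_1)\cup E(C_2) = E(D)$ if and only if $E(\gamma(C_1))\cup E(\gamma(C_2)) = E(H)$. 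Hence $(C_1,C_2)\mapsto(\gamma(C_1),\gamma(C_2))$ is an injection from the ordered pairs of directed cycles with $C_1\cup C_2 = D$ into the ordered pairs $(\gamma_1,\gamma_2)$ of simple cycles of $H$ with $\gamma_1\cup\gamma_2 = H$. (If some $C_i$ avoided $V(H)$ altogether, then $C_i$ would be a component of $D$, forcing $D=C_i$ to be a single cycle, i.e.\ $k=0$, already handled.)

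Next I would bound the number of such covering pairs of $H$. Each ordered pair $(\gamma_1,\gamma_2)$ with $\gamma_1\cup\gamma_2 = H$ is recovered from the $3$-colouring of $E(H)$ that records, for each arc, whether it lies in $\gamma_1$ only, in $\gamma_2$ only, or in both (read back as: $\gamma_1$ is the set of arcs coloured ``$\gamma_1$'' or ``both'', similarly for $\gamma_2$); so the number of covering pairs is at most $3^{|E(H)|}$. Finally, writing $m'$ for the number of vertices of $H$, every vertex of $H$ has in- and out-degree at least $1$ and is not a suppressed vertex, hence total degree at least $3$, so $3m' \le \sum_{v} \deg_H(v) = 2|E(H)| = 2(m'+k)$, giving $m' \le 2k$ and therefore $|E(H)| = m'+k \le 3k$. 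Combining the two estimates, the number of pairs is at most $3^{3k} = 27^{k}$, as required.

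The step that needs the most care is the heart-projection: one must verify carefully that a simple cycle of $D$ projects to a genuinely \emph{simple} cycle of $H$ (handling loops and multi-arcs of $H$ correctly), that distinct cycles of $D$ have distinct images, and that the edge-covering condition transfers faithfully across subdivision. The colouring count and the degree inequality $m'\le 2k$ are then routine, the latter being exactly the bound already used in the proof of Lemma~\ref{ymkbound2}.
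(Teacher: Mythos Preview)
Your proposal is correct and follows essentially the same route as the paper: pass to the heart $H(D)$, observe that each arc of $H$ is covered by $C_1$, $C_2$, or both (a $3$-colouring of $E(H)$), and bound $|E(H)|\le 3k$ via $m'\le 2k$. Your write-up is more careful than the paper's in justifying the injection from pairs $(C_1,C_2)$ in $D$ to edge-colourings of $H$ (via the subdivision-path argument) and in handling the $k=0$ case, but the underlying argument is the same.
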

\begin{proof}
Consider the heart $H(D)$ of $D$.
Recall that $H(D)$ is the (multi)-digraph formed by suppressing the degree $2$ vertices of $D$ and retaining orientations.
As $D$ has excess $k$, $H(D)$ has at most $2k$ vertices.
Furthermore, the excess of $H(D)$ is the same as the excess of $D$ as we only remove vertices of degree $2$.
Thus $H(D)$ has at most $3k$ edges.

Then, each edge of $H(D)$ must be a subdigraph of either $C_1$, $C_2$ or both.
So there are $3^{3k}=27^k$ choices for the pair $C_1, C_2$ as claimed.
\end{proof}
We are now in a position to give a bound on $\Delta$.
\begin{lemma}
\label{lem:Delta}
$\Delta \leq \log(2)$ for any $\delta \in (0, 1/800]$
\end{lemma}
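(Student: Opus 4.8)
The plan is to dominate $\Delta$ by a weighted sum of the first moments $\bE(Z(m,k))$ and then estimate that sum with the enumeration bound of Lemma~\ref{uboundynk}. Throughout write $a=\delta n^{1/3}$, $b=\delta^{1/2}n^{1/3}$ as in the proof of Lemma~\ref{lem:mu}, and note that $\bE(I_AI_B)=p^{|A\cup B|}$ since edges appear independently. Fix a pair $(A,B)$ contributing to $\Delta$, say $A=E(C_1)$ and $B=E(C_2)$ for directed cycles $C_1,C_2$ of lengths in $[a,b]$ with $A\neq B$ and $A\cap B\neq\emptyset$, and let $D\defeq C_1\cup C_2$, with $m\defeq|V(D)|$ and excess $k$. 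Then $D$ is strongly connected: every vertex lies on one of the two cycles, both cycles pass through the tail of any common arc, and from that tail one reaches every vertex of $V(C_1)\cup V(C_2)$ along the two cycles. Moreover $|V(C_i)|=|E(C_i)|\le b$ and $|E(C_1)\cap E(C_2)|\ge1$, so $a\le m\le 2b$, and since $e(D)\le|V(C_1)|+|V(C_2)|-1$ while $m\ge\max(|V(C_1)|,|V(C_2)|)$ we get $k=e(D)-m\le\min(|V(C_1)|,|V(C_2)|)-1$; in particular $1\le k\le m-1$, the lower bound because an excess-$0$ strongly connected digraph is a single cycle (which would force $C_1=C_2$).

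By Lemma~\ref{zqk} a fixed strongly connected digraph of excess $k$ is the union of at most $27^k$ ordered pairs of directed cycles. Grouping the pairs $(A,B)$ by the digraph $D$ they span and then bounding crudely by \emph{all} strongly connected subdigraphs of $D(n,1/n)$ with $m\in[a,2b]$ vertices and excess $k\in[1,m-1]$,
$$\Delta \;=\; \frac12\sum_{\substack{A\neq B\\ A\cap B\neq\emptyset}} p^{|A\cup B|} \;\le\; \frac12\sum_{m=a}^{2b}\ \sum_{k=1}^{m-1} 27^k\,\bE\big(Z(m,k)\big),$$
which is the advertised linear combination of first moments.

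To estimate this sum I would substitute $\bE(Z(m,k))=\binom nm Y(m,k)p^{m+k}\le Y(m,k)/(m!\,n^k)$ (using $p=1/n$ and $\binom nm\le n^m/m!$) together with Lemma~\ref{uboundynk}. Since $k\le m$, we have $m+k\le 2m$, so
$$27^k\,\bE\big(Z(m,k)\big)\ \le\ \frac{27^k(m+k)^k m^{2k}(m-1)!}{k!\,m!\,n^k}\ \le\ \frac1m\cdot\frac{(54\,m^3/n)^k}{k!},$$
and summing over $k\ge1$ gives $\sum_{k\ge1}27^k\bE(Z(m,k))\le \frac1m(e^{54m^3/n}-1)$. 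For $\delta\le1/800$ and $m\le2b$ one has $54m^3/n\le 432\,\delta^{3/2}<\log2$, and since $e^x-1\le2x$ on $[0,\log2]$ this is at most $108\,m^2/n$. Summing over $a\le m\le2b$, with $\sum_{m\le2b}m^2=O(b^3)$ and $b^3/n=\delta^{3/2}$, yields $\Delta=O(\delta^{3/2})$, which lies well below $\log2$ for all $\delta\le1/800$.

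The calculation in the third paragraph is routine; the real content is the reduction. The points needing care are (i) checking that the union of two edge-overlapping directed cycles is strongly connected, so that Lemma~\ref{zqk} applies, and (ii) extracting the ranges $m\le2b$ and $k\le m-1$ from the structure of $D=C_1\cup C_2$. The upper cutoff $m\le2b$ is essential — an unbounded $m$-sum of $m^2/n$ would diverge — and it is exactly the smallness of $b^3/n=\delta^{3/2}$ that lets the final bound comfortably absorb the constant $\log2$.
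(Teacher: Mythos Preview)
Your proof is correct and follows essentially the same route as the paper: reduce $\Delta$ to $\frac12\sum 27^k\,\bE(Z(m,k))$ via Lemma~\ref{zqk}, then estimate using Lemma~\ref{uboundynk}. Your observation that the edge overlap forces $k\le m-1$ lets you skip the paper's separate treatment of the range $k>m$, and replacing the paper's uniform bound on $e^{54m^3/n}-1$ by $e^x-1\le 2x$ yields $\Delta=O(\delta^{3/2})$ instead of $O(\delta^{3/2}\log(1/\delta))$, but these are minor refinements of the same argument.
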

\begin{proof}
Let
$$
\Gamma(k) := \{ E(C) | C \subseteq \overrightarrow{K_n}, C \cong \overrightarrow{C_k} \},
$$
where $\overrightarrow{K_n}$ is the complete digraph on $[n]$ and $\overrightarrow{C_k}$ is the directed cycle of length $k$.
For $\alpha \in \Gamma(k)$ let $I_\alpha$ be the indicator function of the event that all edges of $\alpha$ are present in a given realisation of $D(n,1/n)$.
Also, define 
$$
\Gamma = \bigcup_{k=a}^b \Gamma(k).
$$
Then, by definition, 
\begin{equation*}
\Delta = \frac{1}{2}\mathop{\sum\sum}_{\alpha \neq \beta, \alpha \cap \beta \neq \emptyset} \bE(I_\alpha I_\beta)
\end{equation*}
Let $\Gamma_{\alpha}^{m,k}(t)$ be the set of $\beta \in \Gamma(t)$ such that $\alpha \cup \beta$ is a collection of $m+k$ edges spanning $m$ vertices.
Then,
\begin{align}
\nonumber 2 \Delta &= \sum_{s=a}^b\sum_{t=a}^b \sum_{\alpha \in \Gamma(s)} \sum_{m=s}^{\infty} \sum_{k=1}^\infty \sum_{\beta \in \Gamma_{\alpha}^{m,k}(t)} p^{m+k} \\
\nonumber & \leq \sum_{m=a}^{2b} \sum_{k=1}^{\infty} \sum_{s = a}^{m} \sum_{t= a}^m \sum_{\alpha \in \Gamma(s)} \sum_{\beta \in \Gamma_{\alpha}^{m,k}(t)} p^{m+k} \\
\label{ex2bound} & \leq  \sum_{m=a}^{2b} \sum_{k=1}^{\infty} 27^k \bE(Z(m,k)),
\end{align}
where the last inequality follows from Lemma~\ref{zqk}.
Note that 
$$
\bE(Z(m,k)) = \binom{n}{m} p^{m+k} Y(m,k)
$$
by definition.
We will use the following two bounds on $Y(m,k)$ which follow immediately from Lemma~\ref{uboundynk}.
\begin{itemize}
\item If $k \leq m$, then $Y(m,k) \leq \frac{2^k m^{3k} m!}{k! m}$
\item If $k > m$, then $Y(m,k) \leq \frac{(2e)^k m^{2k} m!}{m}$
\end{itemize}
This allows us to split the sum in~(\ref{ex2bound}) based upon whether $k \leq m$ or $k>m$ to obtain
\begin{align}
 \nonumber 2\Delta & \leq  \sum_{m=a}^{2b} \sum_{k=1}^{m} 27^k \binom{n}{m} \frac{2^k m^{3k} m!}{k! m} p^{m+k} + \sum_{m=a}^{2b} \sum_{k=m+1}^{\infty}  27^k \binom{n}{m}  \frac{(2e)^k m^{2k} m!}{m} p^{m+k} \\
 \nonumber & \leq \sum_{m=a}^{2 b} \frac{1}{m}\sum_{k=1}^{\infty}  \frac{(54 p m^3)^k}{k!} + \sum_{m=a}^{2 b} \frac{1}{m} \sum_{k=m+1}^{\infty} (54e m^2 p)^k  \\
 & \leq \frac{\log(4/\delta)}{2}(e^{432 \delta^{3/2}}-1 +23328e^2\delta^2) \label{eq:Delta}
\end{align}
Where the $23328e^2\delta^2$ term comes from noting $k \geq 2$ in the range $k \geq m+1$ and that for $x\leq 1/2$
$$
\sum_{k=2}^\infty x^k \leq 2x^2
$$
As~(\ref{eq:Delta}) is increasing in $\delta$, we simply need to check that the Lemma holds for $\delta = 1/800$ which may be done numerically.

\end{proof}
Finally, to prove Theorem~\ref{lbound} we substitute the values obtained for $\mu$ and $\Delta$ in Lemmas~\ref{lem:mu} and~\ref{lem:Delta} respectively into Theorem~\ref{jansonineq}.
That is,
$$
\bP(X = 0) \leq e^{-\mu +\Delta} \leq e^{-\log(1/\delta)/2 + \log(2)} = 2 \delta^{1/2}
$$
So the probability there is no directed cycle of length at least $\delta n^{1/3}$ is at most $2 \delta^{1/2}$ and, as cycles are strongly connected, this is also an upper bound on the probability there is no strongly connected component of size at least $\delta n^{1/3}$.
\section{Proof of Theorem~\ref{mainub}}
\label{sec:ubound}
In this section we prove an upper bound on the component sizes in $D(n,p)$.
Again, we only consider the case when $p=1/n$ to simplify notation and calculations.
The reader is referred to Section~\ref{sec:adaptations} for a sketch of the adaptations to extend the result to the full critical window.
The following is a restatement of Theorem~\ref{mainub} for $p=1/n$.
\begin{theorem}
\label{thm:upperbound}
 There exist constants $\zeta, \eta > 0$ such that for any $A>0$ if $n$ is sufficiently large with respect to $A$, then the probability that $D(n,1/n)$ contains any component of size at least $An^{1/3}$ is at most
 $
 \zeta e^{-\eta A^{3/2}}.
 $
\end{theorem}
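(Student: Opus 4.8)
The strategy is to estimate the expected number of strongly connected components of size at least $An^{1/3}$ and then apply Markov's inequality. Fix $m\ge An^{1/3}$ and a set $W\subseteq[n]$ with $|W|=m$, and write $U=[n]\setminus W$. The event ``$W$ is the vertex set of a strongly connected component'' is the intersection of two events governed by disjoint edge sets: that $D(n,1/n)[W]$ is strongly connected, which depends only on edges inside $W$; and that no directed path starting and ending in $W$ has all of its internal vertices in $U$, which depends only on edges incident to $U$. Writing $R=R^+(W)\setminus W$ for the set of vertices of $U$ reachable from $W$, the second event is precisely that there is no edge from $R$ into $W$. Hence $\bP(W\text{ is a component})=\bP(D(n,1/n)[W]\text{ strongly connected})\cdot q(m)$, where $q(m)$ is the probability of the second event and depends only on $m$. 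Summing over $W$ and $m$,
\[
\bE\bigl(\#\{\text{components of size}\ge An^{1/3}\}\bigr)=\sum_{m\ge An^{1/3}}\binom nm\,q(m)\,\bP\bigl(D(m,1/n)\text{ strongly connected}\bigr),
\]
so it suffices to show this is $O\bigl(e^{-\eta A^{3/2}}\bigr)$, with $\zeta$ the implied constant.

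I would first bound the strong-connectivity probability, writing it as $\sum_{k\ge0}Y(m,k)\,p^{m+k}(1-p)^{m(m-1)-m-k}$ with $p=1/n$. Using $Y(m,0)=(m-1)!$, Lemma~\ref{sboundynk} for $1\le k\le\sqrt m/3$, and Lemma~\ref{uboundynk} for larger $k$ (which contributes a negligible amount in the relevant range, since $m^3/n$ is then bounded), together with $\binom nm\le n^m/m!$ and the identity $\sum_{k\ge1}x^k/(2k-1)!=\sqrt x\,\sinh\sqrt x$, this gives a bound of the shape
\[
\binom nm\,\bP\bigl(D(m,1/n)\text{ strongly connected}\bigr)\ \le\ \frac{C}{m}\Bigl(1+\sqrt{m^3/n}\,\sinh\sqrt{m^3/n}\Bigr)e^{-\Omega(m^2/n)}.
\]
For $m=\Theta(n^{1/3})$ the bracketed factor grows like $e^{\sqrt{m^3/n}}$, so the whole argument hinges on $q(m)$ decaying strictly faster, namely $q(m)\le\mathrm{poly}(m)\,e^{-c\sqrt{m^3/n}}$ for some constant $c>1$.

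To estimate $q(m)$ I would run an exploration process. First reveal the edges from $W$ to $U$, producing the set $X$ of out-neighbours of $W$ in $U$; since $m\ll\sqrt n$ collisions are negligible and $|X|=(1-o(1))m$ with exponential concentration. Then explore $R=R^+(X)$ inside $U$ in breadth-first fashion: the number of active vertices performs a walk with mean-zero increments distributed as $\mathrm{Poisson}(1)-1$ (up to a small downward drift from vertex depletion) started from $|X|$, and $|R|$ is its first hitting time of $0$. Conditioning on $R$ and then revealing the still-untouched edges from $U$ to $W$ yields $q(m)=\bE\bigl[(1-p)^{m|R|}\bigr]\le\bE\bigl[e^{-m|R|/n}\bigr]$. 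The crucial input is a lower-tail bound for $|R|$: ballot-type and local-limit estimates for the walk give $\bP(|R|\le t)\le\mathrm{poly}(m)\,e^{-(1/2-o(1))m^2/t}$ for $t\le m^2$. Plugging this into $\bE[e^{-m|R|/n}]=\int_0^\infty\bP(|R|\le t)\,\tfrac mn e^{-mt/n}\,dt$ and optimising the exponent $-\tfrac12 m^2/t-mt/n$ over $t$ (Laplace's method, optimum near $t=\sqrt{mn/2}$) gives $q(m)\le\mathrm{poly}(m)\,e^{-(\sqrt2-o(1))\sqrt{m^3/n}}$, so one may take $c$ arbitrarily close to $\sqrt2>1$.

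Finally I would substitute the two estimates into the sum, pass to the integral $\int_A^\infty x^{1/2}e^{-(c-1)x^{3/2}}\,dx$ via the substitution $x=mn^{-1/3}$, and evaluate it (substituting $u=x^{3/2}$) to obtain $O\bigl(e^{-(c-1)A^{3/2}}\bigr)$, so $\eta=c-1$ works; the tail $m=\omega(n^{1/3})$ contributes $o(1)$ since $\sqrt{m^3/n}\to\infty$ there. The main obstacle is the third paragraph: the decay rate of $q(m)$ must genuinely beat the exponential growth of the strong-connectivity probability coming from high-excess components, so $c$ has to be shown to exceed $1$. This forces a reasonably sharp treatment of the exploration's lower tail — in particular one cannot afford to lose a constant factor in the size of $X$, which is why it is important that $m\ll\sqrt n$ makes collisions negligible — together with comparable care in bounding the high-excess terms via Lemma~\ref{uboundynk} and in handling the slight subcriticality of the exploration inside $U$.
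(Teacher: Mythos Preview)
Your outline is essentially the paper's proof: first moment on the number of large components, factoring the probability that a given $W$ is a component as (strongly connected inside $W$) times $q(m)=\bP(\text{no back-edge from }R^+(W)\setminus W\text{ to }W)$, bounding the first factor via Lemmas~\ref{uboundynk}--\ref{sboundynk} to get growth $e^{\sqrt{m^3/n}}$, and bounding $q(m)$ via an exploration process. The paper's $\bP(Y_{\tau_1}=0\mid X_0=m)$ in Lemma~\ref{lem:sccpbty} is exactly your $q(m)$, and the discrete partition over $\tau_1$ there is the Riemann-sum version of your Laplace integral.

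The one real issue is the constant you extract. Your ballot-type bound $\bP(|R|\le t)\lesssim e^{-m^2/(2t)}$ is the estimate for a \emph{driftless} walk; but the exploration inside $U$ has a downward drift of order $t^2/(2n)$ from vertex depletion, and at the relevant scale $t\asymp\sqrt{mn}$ this drift is $\Theta(m)$, i.e.\ of the same order as the initial height. It therefore cannot be absorbed into the $o(1)$. With the drift, the correct tail is $\bP(|R|\le c\sqrt{mn})\lesssim\exp\bigl(-\tfrac{(2-c^2)^2}{8c}\,m^{3/2}n^{-1/2}\bigr)$, which is precisely Lemma~\ref{lem:exprocessbound}. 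Optimising $\tfrac{(2-c^2)^2}{8c}+c$ over $c\in(0,\sqrt2)$ gives the minimum $4\sqrt6/9\approx1.089$ at $c^2=2/3$, not $\sqrt2$. Since this still exceeds $1$, your scheme survives, but the margin is much thinner than you suggest, and you correctly identify this step as the crux.

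Two smaller points. First, your reason for discarding $k>\sqrt m/3$ (``$m^3/n$ bounded'') is not quite right, since $m^3/n=A^3$ grows with $A$; what actually kills those terms is that the Poisson tail $\sum_{k>\sqrt m/3}(2m^3/n)^k/k!$ is negligible once $\sqrt m\gg m^3/n$, which holds throughout the range you need. Second, the error terms in both the exploration bound and the strong-connectivity bound are $O(m^2/n)$ and can swamp the main $m^{3/2}n^{-1/2}$ term for genuinely large $m$; the paper handles this by invoking Lemma~\ref{lem:nogc} (\L{}uczak--Seierstad) to cap $m$ at $n^{1/3}\log\log n$ and Corollary~\ref{cor:nobigxs} to cap the excess, rather than trusting the integral for all $m$.
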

We will use the first moment method to prove this theorem and calculate the expected number of large strongly connected components in $D(n,1/n)$.
Note that it is important to count components and not strongly connected subgraphs as the expected number of strongly connected subgraphs in $D(n,1/n)$ blows up as $n \to \infty$.
Thus for each strongly connected subgraph, we will use an exploration process to determine whether or not it is a component.

The exploration process we use was initially developed by Martin-L\"of~\cite{martinlof} and Karp~\cite{karptc}.
During this process, vertices will be in one of three classes: \emph{active}, \emph{explored} or \emph{unexplored}.
At time $t \in \bN$, we let $X_t$ be the number of active vertices, $A_t$ the set of active vertices, $E_t$ the set of explored vertices and $U_t$ the set of unexplored vertices.

We will start from a set $A_0$ of vertices of size $X_0$ and fix an ordering of the vertices, starting with $A_0$. 
For step $t \geq 1$, if $X_{t-1}>0$ let $w_t$ be the first active vertex.
Otherwise, let $w_t$ be the first unexplored vertex.
Define $\eta_t$ to be the number of unexplored out-neighbours of $w_t$ in $D(n,1/n)$.
Change the class of each of these vertices to active and set $w_t$ to explored.
This means that $|E_t| = t$ and furthermore, $|U_t| = n - X_t - t$.
Let $N_t = n - X_t - t - \ind(X_t = 0)$ be the number of potential unexplored out-neighbours of $w_{t+1}$ i.e. the number of unexplored vertices which are not $w_{t+1}$.
Then, given the history of the process, $\eta_t$ is distributed as a binomial random variable with parameters $N_{t-1}$ and $1/n$.
Furthermore, the following recurrence relation holds.
\begin{equation}
 X_t =
 \begin{cases}
 X_{t-1} + \eta_t -1 & \text{if } X_{t-1} > 0, \\
 \eta_t  & \text{otherwise}
 \end{cases}
\end{equation}
Let $\tau_1 = \min\{t \geq 1 : X_t = 0\}$. Note that this is a stopping time and at time $\tau_1$ the set $E_{\tau_1}$ of explored vertices is precisely the out-component of $A_0$.
If $A_0$ spans a strongly connected subdigraph $D_0$ of $D(n,1/n)$, then $D_0$ is a strongly connected component if and only if there are no edges from $E_{\tau_1} \sm A_0$ to $A_0$.
The key idea will be to show that if $X_0$ is sufficiently large, then it is very unlikely for $\tau_1$ to be small, and consequently it is also very unlikely that there are no edges from $E_{\tau_1} \sm A_0$ to $A_0$.
This is encapsulated in the following lemma.
\begin{lemma}
\label{lem:exprocessbound}
 Let $X_t$ be the exploration process defined above with starting set of vertices $A_0$ of size $X_0=m$. Suppose $0<c<\sqrt2$ is a fixed constant. Then,
 $$
 \bP(\tau_1 < c m^{1/2}n^{1/2}) \leq 2 e^{-\frac{(2-c^2)^2}{8c} m^{3/2}n^{-1/2} +O(m^2 n^{-1})}.
 $$
\end{lemma}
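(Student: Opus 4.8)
The plan is to bound the probability that the exploration walk $X_t$ hits $0$ before time $T := c\,m^{1/2}n^{1/2}$ by comparing $X_t$ from below with an explicit walk with independent increments and then applying an exponential (Doob) maximal inequality. The heuristic driving the constant: for $1\le s\le\tau_1$ one has $X_t = m+\sum_{s=1}^t(\eta_s-1)$ with $\eta_s\mid\cF_{s-1}\sim\Bin(N_{s-1},1/n)$ and $N_{s-1}=n-X_{s-1}-(s-1)$, so the $s$-th increment has drift $-(X_{s-1}+s-1)/n$; as long as $X_{s-1}$ stays of order $m$ this is essentially $-s/n$, and summed over $s\le T$ it gives a downward push of size $\tfrac{T^2}{2n}=\tfrac{c^2m}{2}$. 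Since the walk starts at height $m$, hitting $0$ by time $T$ then requires an extra downward fluctuation of size $\approx m-\tfrac{c^2m}{2}=\tfrac{(2-c^2)m}{2}$ over a time window of length $T$, which costs $\exp\bigl(-\tfrac{((2-c^2)m/2)^2}{2T}\bigr)=\exp\bigl(-\tfrac{(2-c^2)^2}{8c}m^{3/2}n^{-1/2}\bigr)$ --- exactly the claimed rate; note also that $c<\sqrt2$ is precisely the condition under which $\tfrac{(2-c^2)m}{2}>0$.

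First I would isolate the good event $\cE:=\{X_t\le 2m \text{ for all } t\le\min(\tau_1,T)\}$ and write $\bP(\tau_1<T)\le\bP(\cE^c)+\bP(\{\tau_1<T\}\cap\cE)$. For $\bP(\cE^c)$: since $N_{s-1}\le n$ always, one may couple $\eta_s\le\hat\eta_s$ for $s\le\tau_1$ with $\hat\eta_1,\hat\eta_2,\dots$ i.i.d.\ $\Bin(n,1/n)$, so that on $\cE^c$ the centred walk $\hat R_t=\sum_{s\le t}(\hat\eta_s-1)$ exceeds $m$ at some $t\le T$; a Chernoff/Doob estimate with the optimal tilt of order $m/T=\Theta(m^{1/2}n^{-1/2})$ gives $\bP(\cE^c)\le e^{-m^2/(2T)+O(m^2/n)}$, and since $c<2$ one has $\tfrac{m^2}{2T}=\tfrac{1}{2c}m^{3/2}n^{-1/2}\ge\tfrac{(2-c^2)^2}{8c}m^{3/2}n^{-1/2}$, so this term already has the required form.

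For the main term, on $\{\tau_1<T\}\cap\cE$ we have $N_{s-1}\ge n-2m-s$ for every $1\le s\le\tau_1$. I would then couple the process with independent variables $\tilde\eta_s\sim\Bin(n-2m-s,1/n)$ so that $\eta_s\ge\tilde\eta_s$ for $s\le\tau_1$ on this event: draw each $\tilde\eta_s$ from fresh randomness, and when $N_{s-1}\ge n-2m-s$ set $\eta_s=\tilde\eta_s+\zeta_s$ with $\zeta_s\sim\Bin(N_{s-1}-(n-2m-s),1/n)$ drawn independently (and when $N_{s-1}<n-2m-s$, which cannot occur on the good event, just draw $\eta_s$ directly); this keeps $(\tilde\eta_s)$ independent with the stated marginals and independent of the exploration history. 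Then $\tilde W_{\tau_1}:=m+\sum_{s\le\tau_1}(\tilde\eta_s-1)\le X_{\tau_1}=0$ with $\tau_1<T$, so $\bP(\{\tau_1<T\}\cap\cE)\le\bP(\exists\,1\le t<T:\tilde W_t\le 0)$. Writing $V_t=\sum_{s\le t}(1-\tilde\eta_s)$ and $\psi_s(\theta)=\log\bE[e^{\theta(1-\tilde\eta_s)}]\ge 0$ (by Jensen, since $\bE[1-\tilde\eta_s]>0$), the process $e^{\theta V_t}$ is a non-negative submartingale, so Doob's inequality gives, for every $\theta>0$, $\bP(\exists\,t<T:\tilde W_t\le 0)=\bP(\max_{t<T}V_t\ge m)\le\exp\bigl(-\theta m+\sum_{s<T}\psi_s(\theta)\bigr)$. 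Using $\log(1+x)\le x$ yields $\psi_s(\theta)\le(\theta+e^{-\theta}-1)+\tfrac{2m+s}{n}(1-e^{-\theta})$; summing over $s<T$, using $\sum_{s<T}s\le\tfrac{T^2}{2}=\tfrac{c^2mn}{2}$, and optimising in $\delta:=1-e^{-\theta}$ (with $\theta+e^{-\theta}-1=\tfrac{\delta^2}{2}+O(\delta^3)$ and optimal $\delta=\Theta(m/T)$) gives $-\theta m+\sum_{s<T}\psi_s(\theta)\le-\tfrac{(2-c^2)^2}{8c}m^{3/2}n^{-1/2}+O(m^2/n)$. Adding the two contributions gives the factor $2$ and proves the lemma (here $T$ need not be an integer, so ``$t<T$'' ranges over $1,\dots,\lceil T\rceil-1$, which changes nothing).

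The step I expect to be the main obstacle is pinning down the constant $\tfrac{(2-c^2)^2}{8c}$, and in particular making it large enough to cover the full range $c<\sqrt2$. This forces two things. First, one must retain the \emph{time-dependent} lower bound $N_{s-1}\ge n-s-2m$ rather than the cruder $N_{s-1}\ge n-T$: the latter overcounts the accumulated drift by a factor of $2$ (it gives $\tfrac{T^2}{n}$ instead of $\tfrac{T^2}{2n}$) and only yields the range $c<1$, which is why the a priori control provided by $\cE$ is genuinely needed. Second, one must check that at the optimal tilt $\theta=\Theta(m^{1/2}n^{-1/2})$ every approximation --- the $\log(1+x)\le x$ bound, the replacement of $\sum_{s<T}s$ by $\tfrac{T^2}{2}$, the Taylor expansion of $\theta+e^{-\theta}-1$, and the $O(1)$ and $O(m)$ discrepancies in the number of steps and in $N_{s-1}$ --- contributes only $O(m^2/n)$ to the exponent. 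Everything else (the two stochastic-domination couplings, the submartingale property, Doob's inequality) is routine.
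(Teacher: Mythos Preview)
Your proposal is correct and follows essentially the same route as the paper: split off a ``bad'' event where $X_t$ exceeds a fixed multiple of $m$ before time $T$, handle that by coupling with an upper process with $\mathrm{Bin}(n,1/n)$ increments, and on the good event couple with a lower process with $\mathrm{Bin}(n-s-\text{const}\cdot m,1/n)$ increments, then apply Doob's maximal inequality to the exponential of the resulting (sub)martingale and optimise the tilt. The paper uses a threshold of $10m$ where you use $2m$, and it aggregates the lower process into a single binomial $R_\xi\sim\mathrm{Bin}(\ell\xi,p)$ before optimising, whereas you sum the cumulant generating functions $\psi_s(\theta)$ step by step; but the optimisation is the same (your $\delta=1-e^{-\theta}$ is the paper's $\delta=1-x/p$ in disguise), and both routes arrive at the constant $\tfrac{(2-c^2)^2}{8c}$ via the identity $\sum_{s<T}s\approx T^2/2=c^2mn/2$. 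Your explicit verification that $\tfrac{1}{2c}\ge\tfrac{(2-c^2)^2}{8c}$ for $c<2$ plays the same role as the paper's final comparison $\bP(\cE_3)\le\bP(\cE_2)e^{O(m^2/n)}$.
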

\begin{proof}
 Define $\xi = c  m^{1/2}n^{1/2}$ and consider the auxiliary process, $X_t'$ which we define recursively by
 \begin{align*}
  X_0' & = m,\\
  X_t' & = X_{t-1}' - 1 + W_t \text{ for } t \geq 1,
 \end{align*}
where $W_t \sim \Bin(n-t-10m,p)$.
Let $\tau_2$ be the stopping time,
$$
\tau_2 = \inf \{ t : X_t>10 m \}
$$
We may couple the processes $(X_t, X_t')$ such that $X_t'$ is stochastically dominated by $X_t$ for $t < \tau_2$.
The coupling may be explicitly defined by setting $\eta_t = W_t + W_t'$ with $W_t' \sim \Bin(10 m - X_{t-1},p)$.
Define another stopping time, $\tau_1' = \min \{t \geq 1 : X_t' =0 \}$. Consider the following events
\begin{align*}
\cE_1 & = \{ \tau_1  < c m^{1/2}n^{1/2} \} \\
\cE_2 & = \{ \tau_1'  < c m^{1/2}n^{1/2} \} \\
\cE_3 & = \{ \tau_2 < c m^{1/2}n^{1/2} \}
\end{align*}
And note that $\bP(\cE_1) \leq \bP(\cE_2) + \bP(\cE_3)$ by our choice of coupling and a union bound (as the coupling guarantees $\cE_1 \subseteq \cE_2 \cup \cE_3$). Thus we only need to bound the probabilities of the simpler events $\cE_2$ and $\cE_3$.
We begin by considering $\cE_3$. To bound its probability we consider the upper bound process $M_t$ defined by
\begin{align*}
 M_0 & = m, \\
 M_t & = M_{t-1} - 1 +B_t \text{ for } t \geq 1,
\end{align*}
where $B_t \sim \Bin(n,1/n)$. It is straightforward to couple $(X_t,M_t)$ such that $M_t$ stochastically dominates $X_t$.
Furthermore, $M_t$ is a martingale.
Hence, $\bP(\cE_3) \leq \bP(\tau_2' < c  m^{1/2}n^{1/2})$ where $\tau_2'$ is the stopping time, $\tau_2' = \min\{t:M_t>10m\}$.
To bound the probability of $\cE_2$ consider the process $Y_t$ defined as $Y_t = m - X_t'$.
One can check that $Y_t$ is a submartingale.

As $x \mapsto e^{\alpha x}$ is a convex non-decreasing function for any $\alpha>0$, we may apply Jensen's inequality to deduce that $Z_t^- = e^{\alpha Y_t}$ and $Z_t^+ = e^{\alpha M_t}$ are submartingales.
Also, $Z_t^-,Z_t^+ >0$ for any $i \in \bN$.
Starting with $Z_t^-$, we may apply Doob's maximal inequality~\cite[Section~12.6]{grimmett2001probability} and deduce that
\begin{equation}
\label{eq:applyDMI}
 \bP\bigg(\min_{0 \leq t \leq \xi} X_i' \leq 0 \bigg) = \bP\bigg(\max_{0 \leq t \leq \xi} Z_t^-\geq e^{\alpha m}\bigg)  \leq \frac{\bE(Z_{\xi}^-)}{e^{\alpha m}} 
\end{equation}
We may rewrite this by noting that 
$$
Y_t = m - X_t' = t - \sum_{i=1}^t W_i = t - R_t
$$
where $R_t$ is binomially distributed and in particular $R_\xi \sim \Bin(l\xi,p)$ for 
$$
l\xi = c m^{1/2}n^{3/2} - \frac{c^2 mn}{2} -10 c m^{3/2}n^{1/2} + \frac{c m^{1/2} n^{1/2}}{2} 
$$
Also, we choose $x$ such that $x l \xi = \xi - m$.
Then~(\ref{eq:applyDMI}) may be rewritten as $e^{-\alpha m} \bE(Z_\xi^-) = e^{\alpha xl \xi} \bE(e^{-\alpha R_\xi})$.
The next stage is to rearrange this into a form which resembles the usual Chernoff bounds (for $x<p$).
So, let
$$
f(\alpha) = e^{\alpha xl \xi} \bE(e^{-\alpha R_\xi}) = \bigg[e^{\alpha x} (p e^{-\alpha} + 1 - p)\bigg]^{l \xi}
$$
Then, we choose $\alpha^*$ to minimise $f$. Solving $f'(\alpha)=0$, we obtain the solution
$$
e^{-\alpha^*} = \frac{x(1-p)}{p(1-x)}
$$
Note $x<p$ so, $e^{-\alpha^*}<1$ and $\alpha^*>0$ as desired.
Thus,
\begin{align*}
 f(\alpha^*) = & = \bigg[\bigg(  \frac{p(1-x)}{x(1-p)}\bigg)^{x} \bigg( x \frac{1-p}{1-x} + 1-p \bigg)\bigg]^{mt} \\
& = \bigg[\bigg( x \frac{1-p}{1-x} + 1-p \bigg) \bigg( \frac{p}{x}\bigg)^x \bigg( \frac{1-p}{1-x} \bigg)^x \bigg]^{mt} \\
& = \bigg[\bigg( \frac{p}{x}\bigg)^x \bigg( \frac{1-p}{1-x} \bigg)^{1-x} \bigg]^{mt}
\end{align*}
Which is the usual expression found in Chernoff bounds.
As usual, we bound this by writing
$$
f(\alpha^*) = e^{-g(x)l \xi}
$$
and bound $g$, where
$$
g(x) = x \log\bigg(\frac{x}{p}\bigg) + (1-x) \log\bigg(\frac{1-x}{1-p}\bigg)
$$
Computing the Taylor expansion of $g$ we find that $g(p)=g'(p)=0$.
So, if $g''(x) \geq \beta$ for all $x$ between $p$ and $p-h$, then $g(p-h) \geq \beta h^2/2$. Furthermore,
$$
g''(x) = \frac{1}{x} + \frac{1}{1-x}
$$
As $0<x<p$, we have $g''(x) \geq 1/x \geq 1/p$.
So, we deduce that $g(x) \geq \delta^2 p/2$ where $\delta = 1-x/p$. 
All that remains is to compute $\delta$.
As defined earlier, we have $x l \xi = \xi - m$ which for convenience we will write as
\begin{equation}
x l \xi = \xi\bigg(1-\frac{m^{1/2}}{c n^{1/2}} \bigg) \label{eq:xlxi}
\end{equation}
Also, as $p=n^{-1}$, and recalling the definition of $l \xi$ from earlier, 
\begin{align}
\nonumber pl \xi & =  c m^{1/2}n^{1/2} - \frac{c^2 m}{2} +O(m^{3/2} n^{-1/2})  \\
  & = \xi\bigg(1 - \frac{cm^{1/2}}{2n^{1/2}} +O(m n^{-1}) \bigg) \label{eq:plxi}
\end{align}
We divide~(\ref{eq:xlxi}) by~(\ref{eq:plxi}) and as the Taylor expansion of $1/(1-w)$ is $\sum_{i \geq 0} w^i$,
\begin{equation}
 \frac{x}{p} = \frac{1-\frac{m^{1/2}}{c n^{1/2}}}{1 - \frac{cm^{1/2}}{2n^{1/2}} +O(m n^{-1})} = 1 - \frac{m^{1/2}}{c n^{1/2}} + \frac{cm^{1/2}}{2n^{1/2}} +O(m n^{-1})
\end{equation}
From which we may deduce
\begin{equation}
 \delta = \frac{(2-c^2)m^{1/2}}{2c n^{1/2}}  + O(m n^{-1})
\end{equation}
So,
\begin{equation}
\bP(\cE_2) \leq e^{-\frac{\delta^2 p}{2} l \xi} = e^{-\frac{(2-c^2)^2}{8c} m^{3/2}n^{-1/2} +O(m^2 n^{-1})} \label{eq:pe2}
\end{equation}
We may proceed similarly for $Z_t^+$, in particular we must still appeal to Doob's maximal inequality as we seek a bound over the entire process.
In this case we end up with a $\Bin(n \xi, p)$ distribution and are looking at the upper tail rather than the lower.
We find $p n \xi = \xi$ and
$$
x n \xi = \xi + 9 m = \xi \bigg(1 + \frac{9 m^{1/2}}{c n^{1/2}}\bigg)
$$
Thus,
$$
\delta = \frac{x}{p}-1 = \frac{9 m^{1/2}}{cn^{1/2}}
$$
Substituting into the analogous bound, 
\begin{equation}
 \bP(\cE_3) \leq e^{-\frac{\delta^2 p}{3} n \xi} \leq e^{- \frac{27 m^{3/2}}{c n^{1/2}}} \label{pe3}
\end{equation}
Observe that $\bP(\cE_2) \geq \bP(\cE_3)e^{O(m^2 n^{-1})}$ for $0<c<\sqrt{2(1+3\sqrt{6})}$.
Thus, in the range we are interested in, we may use $2\bP(\cE_2)$ as an upper bound for $\bP(\cE_2)+\bP(\cE_3)$ and this proves the lemma.
\end{proof}
We now compute the probability that any given strongly connected subgraph of $D(n,1/n)$ is a component.
To do so, we use the simple observation that a strongly connected subgraph is a component if it is not contained in a larger strongly connected subgraph.
\begin{lemma}
\label{lem:sccpbty}
 There exist $\beta, \gamma>0$ such that if $H$ is any strongly connected subgraph of $D(n,1/n)$ with $m$ vertices.
 Then the probability that $H$ is a strongly connected component of $D(n,1/n)$ is at most $\beta e^{-(1+\gamma)m^{3/2}n^{-1/2} +O(m^2 n^{-1})}$.
\end{lemma}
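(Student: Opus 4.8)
The plan is to run the exploration process from the starting set $A_0=V(H)$, so that $X_0=m$, and to use the characterisation established above: if $A_0$ spans a strongly connected subdigraph of $D(n,1/n)$, then that subdigraph is a strongly connected component exactly when there is no edge from $E_{\tau_1}\setminus A_0$ to $A_0$. Conditioning on $H$ being a subgraph of $D(n,1/n)$ is harmless here, since that event involves only the edges inside $V(H)$, and those play no role below.

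The first step is to see that, given the exploration up to time $\tau_1$, the relevant ``back edges'' are untouched and hence conditionally independent. Since every vertex of $A_0$ is active at time $0$, $A_0$ precedes all other vertices in the fixed ordering, and $X_{t-1}\geq m-(t-1)>0$ for $t\leq m$, the first $m$ steps process precisely the vertices of $A_0$; thus $E_m=A_0$, and every vertex of $E_{\tau_1}\setminus A_0=\{w_{m+1},\dots,w_{\tau_1}\}$ is processed at some time $t>m$, when $A_0\subseteq E_{t-1}$ is already explored. As the exploration only reveals edges whose head is currently unexplored, no edge with head in $A_0$ is ever revealed; in particular neither the edges inside $A_0$ nor any of the $m(\tau_1-m)$ ordered pairs with tail in $E_{\tau_1}\setminus A_0$ and head in $A_0$. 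Therefore, conditional on the exploration up to $\tau_1$ (which determines $A_0$ and $E_{\tau_1}$), each such pair is present independently with probability $p=1/n$, whence
$$
\bP(H\text{ is a component})\leq\bE\big[(1-p)^{m(\tau_1-m)}\big]\leq\bE\big[e^{-(m/n)(\tau_1-m)}\big].
$$

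It remains to estimate $\bE[e^{-(m/n)(\tau_1-m)}]$ using Lemma~\ref{lem:exprocessbound}. Write $s=m^{1/2}n^{1/2}$, $\Lambda=m^{3/2}n^{-1/2}$, and $g(c)=(2-c^2)^2/(8c)$, the rate appearing in that lemma, so that $\bP(\tau_1<cs)\leq 2e^{-g(c)\Lambda+O(m^2/n)}$ for each fixed $c\in(0,\sqrt2)$; recall also $\tau_1\geq m$. A single threshold does not suffice: extracting a factor $e^{-(1+\gamma)\Lambda}$ from $\{\tau_1\geq cs\}$ forces $c>1$, but then $g(c)<1$ and the tail estimate at $c$ is too weak. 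Instead I would chop the range of $\tau_1$ along a finite strictly decreasing sequence $\sqrt2>c_1>c_2>\dots>c_r>0$ and bound $\bE[e^{-(m/n)(\tau_1-m)}]$ by a sum of terms of three kinds: from $\{\tau_1\geq c_1 s\}$ we get at most $e^{-c_1\Lambda+m^2/n}$; from a band $\{c_{i+1}s\leq\tau_1<c_i s\}$ with $1\leq i\leq r-1$, combining $e^{-(m/n)(\tau_1-m)}\leq e^{-c_{i+1}\Lambda+m^2/n}$ (which holds in all cases, since when $c_{i+1}s<m$ the right side already exceeds $1$) with $\bP(\tau_1<c_i s)\leq 2e^{-g(c_i)\Lambda+O(m^2/n)}$ gives at most $2e^{-(c_{i+1}+g(c_i))\Lambda+O(m^2/n)}$; and from $\{\tau_1<c_r s\}$ we get at most $\bP(\tau_1<c_r s)\leq 2e^{-g(c_r)\Lambda+O(m^2/n)}$.

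So it is enough to exhibit such a sequence with $c_1>1+\gamma$, with $g(c_r)>1+\gamma$, and with $c_{i+1}+g(c_i)>1+\gamma$ for $1\leq i\leq r-1$: then each of the $2r+1$ terms is at most a constant multiple of $e^{-(1+\gamma)\Lambda+O(m^2/n)}$, giving the lemma with $\beta=2r+1$. The quantitative fact that makes this work, and the only step I expect to need real care, is that $h(c):=c+g(c)$ is bounded below on $(0,\sqrt2)$ by a constant strictly exceeding $1$; a short computation gives $\min_{(0,\sqrt2)}h=h(\sqrt{2/3})=\frac{4\sqrt6}{9}\approx 1.089$. Taking $\gamma=1/20$ (any $\gamma<(4\sqrt6-9)/9$ will do), one builds the sequence greedily from some $c_1$ just below $\sqrt2$: given $c_i$, if $g(c_i)>1+\gamma$ we stop with $r=i$, and otherwise we set $c_{i+1}$ slightly above $1+\gamma-g(c_i)$, which lies strictly below $c_i$ because $h(c_i)>1+\gamma$ and which drops $c$ by at least a fixed amount, so the region $g(c)>1+\gamma$ is reached in finitely many (about a dozen) steps. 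Everything else is routine bookkeeping around Lemma~\ref{lem:exprocessbound}.
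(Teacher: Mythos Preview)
Your proposal is correct and follows essentially the same approach as the paper: run the exploration from $A_0=V(H)$, observe that $H$ is a component iff there are no back edges from $E_{\tau_1}\setminus A_0$ to $A_0$, partition the range of $\tau_1$ on the scale $m^{1/2}n^{1/2}$, and on each piece combine Lemma~\ref{lem:exprocessbound} with the binomial bound on the back edges. The paper uses a uniform grid of width $\eps$ (so the $i$th exponent is $(i-1)\eps+g(i\eps)$, which for small $\eps$ is close to $h(i\eps)$), whereas you build a non-uniform sequence greedily; in either case the argument rests on exactly the inequality you isolate, namely $\min_{(0,\sqrt2)}\big(c+g(c)\big)=4\sqrt{6}/9>1$, which the paper leaves implicit. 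Your treatment of the conditional independence of the back edges is also a bit more explicit than the paper's.
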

\begin{proof}
 We compute the probability that $H$ is a component of $D(n,1/n)$ by running the exploration process $X_t$ starting from $A_0 = V(H)$.
 So, $X_0 = m$.
 Once the exploration process dies at time $\tau_1$, any backward edge from $E_{\tau_1} \sm A_0$ to $A_0$ gives a strongly connected subgraph of $D(n,1/n)$ which contains $H$.
 Let $Y_t$ be the random variable which counts the number of edges from $E_{\tau_1} \sm A_0$ to $A_0$.
 Note that for $t \geq m$, $Y_t \sim \Bin(m(t-m),p)$.
 Furthermore, $H$ is a strongly connected component of $D(n,1/n)$ if and only if $Y_{\tau_1}=0$.
 
 Let $\eps>0$ and define the events $\cA_i$ for $i =1, \ldots, r$ (where $r \sim c/\eps$ for some $c>1$) to be
 \begin{align*}
  \cA_i & = \{ (i-1) \eps m^{1/2}n^{1/2} \leq \tau_1 < i \eps m^{1/2}n^{1/2} \},\\
  \cA_{r+1} & = \{ r \eps m^{1/2} n^{1/2} \leq \tau_1 \}.
 \end{align*}
Clearly the family $\{ \cA_i : i = 1, \ldots, r +1 \}$ forms a partition of the sample space.
So, by the law of total probability,
\begin{equation}
\label{eq:lotp}
 \bP(Y_{\tau_1}=0) = \sum_{i=1}^{r+1} \bP(Y_{\tau_1}=0 |\cA_i) \bP(\cA_i)
\end{equation}
By applying Lemma~\ref{lem:exprocessbound} when $1\leq i \leq r$ we find
$$
\bP(\cA_i) \leq 2 e^{-\frac{(2-i^2\eps^2)^2}{8i\eps}m^{3/2}n^{-1/2}+O(m^2n^{-1})}
$$
Note that $Y_{\tau_1}$ conditioned on $\cA_i$ stochastically dominates a $\Bin(m((i-1) \eps m^{1/2}n^{1/2} - m),p)$ distribution.
Therefore,
$$
\bP(Y_{\tau_1}=0|\cA_i) \leq (1-p)^{m((i-1) \eps m^{1/2}n^{1/2} - m)} \leq e^{-(i-1)\eps m^{3/2}n^{-1/2} + O(m^2 n^{-1})}
$$
Combining the above and substituting into~(\ref{eq:lotp}) yields
\begin{align}
\bP(Y_{\tau_1}=0) & \leq 2 \sum_{i=1}^{r} e^{-((i-1)\eps+\frac{(2-i^2\eps^2)^2}{8i\eps})m^{3/2}n^{-1/2}+O(m^2n^{-1})} + e^{-r\eps m^{m/2}n^{-1/2}+O(m^2n^{-1})}, \label{secondterm} \\
& \leq (2r+1) e^{-(1+\gamma)m^{3/2}n^{-1/2}+O(m^2n^{-1})},
\end{align}
for some $\gamma>0$ provided that $\eps$ is sufficiently small.
The second term in~(\ref{secondterm}) is a result of the fact $\bP(A_{r+1}) \leq 1$.
This proves the lemma and if one wishes for explicit constants, taking $\eps = 0.025$, $r= 45$ works and gives $\beta < 100 $, $\gamma > 0.06 $.
\end{proof}

The next stage in our proof is to show that a typical instance of $D(n,1/n)$ has no component of large excess and no exceptionally large components.
This will allow us to use the bound from Lemma~\ref{sboundynk} to compute the expected number of large strongly connected components of $D(n,1/n)$.
The first result in this direction is an immediate corollary of a result of \L{}uczak and Seierstad~\cite{luczak2009critical}.
\begin{lemma}[\cite{luczak2009critical}]
\label{lem:nogc}
 The probability that $D(n,1/n)$ contains a strongly connected component of size at least $n^{1/3} \log\log n$ is $o_n(1)$.
\end{lemma}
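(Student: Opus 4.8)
The plan is to obtain this as a direct consequence of part (ii) of the \L{}uczak--Seierstad theorem quoted above, via a monotone coupling; no first-moment computation on $D(n,1/n)$ itself is needed, and indeed a naive first-moment estimate on cycles would \emph{not} suffice, since a long directed cycle appearing in $D(n,1/n)$ need not be a strongly connected \emph{component} (it may sit inside a larger strongly connected component, and $\sum_{\ell \geq n^{1/3}\log\log n} 1/\ell$ is not $o(1)$ anyway). Instead I would pass to a slightly supercritical digraph, where the component structure is rigidly controlled.

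Fix $\eps = \eps(n) = \frac{1}{10} n^{-1/3} (\log\log n)^{1/2}$, so that $\eps \to 0$ while $n\eps^3 = \frac{1}{1000}(\log\log n)^{3/2} \to \infty$; note also that $4\eps^2 n = \frac{1}{25} n^{1/3}\log\log n$ and $1/\eps = 10\, n^{1/3}(\log\log n)^{-1/2} = o(n^{1/3}\log\log n)$. By the standard monotone coupling of binomial random digraphs one may realise $D(n,1/n)$ and $D(n,(1+\eps)/n)$ on a common probability space with $D(n,1/n) \subseteq D(n,(1+\eps)/n)$. If $S$ is a strongly connected component of $D(n,1/n)$ with $|S| \geq n^{1/3}\log\log n$, then the vertices of $S$ are mutually reachable in $D(n,(1+\eps)/n)$ as well, hence all lie in one strongly connected component $S'$ of $D(n,(1+\eps)/n)$, and $|S'| \geq |S| \geq n^{1/3}\log\log n$.

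Since $\eps \to 0$ and $n\eps^3 \to \infty$, part (ii) of the \L{}uczak--Seierstad theorem applies to $D(n,(1+\eps)/n)$: a.a.s.\ every one of its strongly connected components is an isolated vertex, a directed cycle of length $O(1/\eps)$, or the unique complex component, which has $4\eps^2 n(1+o(1))$ vertices. With the above choice of $\eps$, for $n$ large each of these has fewer than $n^{1/3}\log\log n$ vertices: the complex component has $(1+o(1))\frac{1}{25} n^{1/3}\log\log n$ vertices, the cycles have length $O\big(n^{1/3}(\log\log n)^{-1/2}\big)$, and isolated vertices have size $1$. Hence a.a.s.\ $D(n,(1+\eps)/n)$ has no strongly connected component of size at least $n^{1/3}\log\log n$, and by the coupling the same holds for $D(n,1/n)$, which is exactly the claim.

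There is no real obstacle here: the only thing to verify is that the implicit $O(\cdot)$ constant and the $(1+o(1))$ factor supplied by the \L{}uczak--Seierstad theorem cannot spoil any of the comparisons, and this is automatic since $\log\log n \to \infty$ leaves polylogarithmic slack in each of them. Any $\eps(n)$ with $\eps \to 0$, $n\eps^3 \to \infty$ and $\eps^2 n = o(n^{1/3}\log\log n)$ would serve equally well.
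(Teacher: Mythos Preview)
Your coupling argument is correct and clean. The paper, however, does not give its own proof of this lemma at all: it is stated as an immediate consequence of the \L{}uczak--Seierstad paper \cite{luczak2009critical} and left at that. (Recall the remark just before the theorem statement in the introduction: \L{}uczak and Seierstad themselves obtain results \emph{inside} the critical window, in particular that the complex components together contain $O_p(n^{1/3})$ vertices, so the paper is really quoting a result rather than deducing one.)

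What you have done is different: rather than quoting the critical-window corollaries of \cite{luczak2009critical}, you deduce the lemma from the \emph{supercritical} case (part (ii) of the theorem as stated here) via the monotone coupling $D(n,1/n)\subseteq D(n,(1+\eps)/n)$ with $\eps\asymp n^{-1/3}(\log\log n)^{1/2}$. This is a genuinely self-contained derivation from what is actually quoted in the present paper, and all the size comparisons go through with the slack you note. The only point that deserves a word of caution is that the implicit constant in the $O(1/\eps)$ bound on cycle lengths is not under your control; you do address this, and indeed any constant is absorbed by the factor $(\log\log n)^{3/2}$.
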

The next lemma ensures that there are not too many cycles which enables us to prove that the total excess is relatively small.
\begin{lemma}
\label{lem:nolongcyc}
 The probability that $D(n,p)$ contains more than $n^{1/6}$ cycles of length bounded above by $n^{1/3}\log\log(n)$ is $o_n(1)$.
\end{lemma}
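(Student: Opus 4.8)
The plan is a one-line first-moment argument. Set $L \defeq n^{1/3}\log\log n$, and recall from the paragraph following Lemma~\ref{lem:mu} that $X(m)$ denotes the number of directed cycles of length $m$ in $D(n,1/n)$; I would work with $X \defeq \sum_{m=2}^{\lfloor L\rfloor} X(m)$, which counts all cycles of length at most $L$ (there are no loops, so every cycle has length at least $2$). The only input needed is the exact first moment, already recorded in~(\ref{eq:Xmbound}): there are $\binom{n}{m}\tfrac{m!}{m}$ directed cycles of length $m$ on the vertex set $[n]$, each present in $D(n,1/n)$ with probability $n^{-m}$, so
\[
\bE\big(X(m)\big) \;=\; \binom{n}{m}\frac{m!}{m}\,n^{-m} \;=\; \frac{n(n-1)\cdots(n-m+1)}{m\,n^{m}} \;\le\; \frac1m .
\]

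Then I would simply sum: the expected number of short cycles is only logarithmic, since $\sum_{m=2}^{\lfloor L\rfloor}\tfrac1m \le \log L = \tfrac13\log n + \log\log\log n = O(\log n)$, whence $\bE(X) = O(\log n)$. Markov's inequality now finishes the proof,
\[
\bP\big(X \ge n^{1/6}\big) \;\le\; \frac{\bE(X)}{n^{1/6}} \;=\; O\!\left(\frac{\log n}{n^{1/6}}\right) \;=\; o_n(1),
\]
which is exactly the claim, as every cycle of length at most $n^{1/3}\log\log n$ contributes to $X$.

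There is essentially no obstacle here: the one point to observe is that $\sum_{m\le L}1/m$ is a harmonic sum, hence of order $\log L = O(\log n)$, leaving a comfortable polynomial gap below the threshold $n^{1/6}$. (For the full critical window $p = n^{-1}+\lambda n^{-4/3}$ of Section~\ref{sec:adaptations}, one replaces $n^{-m}$ by $p^m$ and uses $(np)^m \le e^{\lambda^+ n^{-1/3}m}\le (\log n)^{\lambda^+}$ for $m\le L$, so $\bE(X(m))\le (\log n)^{\lambda^+}/m$; this inflates $\bE(X)$ by only a polylogarithmic factor and the conclusion is unchanged.)
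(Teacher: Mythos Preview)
Your proposal is correct and follows essentially the same argument as the paper: bound each $\bE(X(m))$ by $1/m$, sum the harmonic series to get $O(\log n)$, and apply Markov's inequality against the threshold $n^{1/6}$. Your parenthetical remark about the critical window also matches the adaptation the paper gives in Section~\ref{sec:adaptations}.
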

\begin{proof}
In this proof and subsequently we will use the convention that $\log^{(k)} x$ is the logarithm function composed with itself $k$ times, while $(\log x)^k$ is its $k$th power.
 We shall show that the expected number of cycles of length at most $n^{1/3} \log^{(2)}n$ is $o(n^{1/6})$ at which point we may apply Markov's inequality.
 So let $C$ be the random variable which counts the number of cycles of length at most $n^{1/3} \log^{(2)} n$ in $D(n,1/n)$. We can calculate its expectation as
\begin{equation}
\label{eq:cyclesbound1}
\bE(C) = \sum_{k=1}^{n^{1/3} \log^{(2)}n} \binom{n}{k} \frac{k!}{k} p^k \leq \sum_{k=1}^{n^{1/3} \log^{(2)}n} \frac{1}{k}
\end{equation}
We use the upper bound on the $k$th harmonic number $H_k \leq \log k +1$, which allows us to deduce that 
\begin{equation}
\label{eq:cyclesbound2}
\bE(C) \leq H_{n^{1/3} \log^{(2)} n} \leq \frac{1}{3}\log n + \log^{(3)} n  +1 \leq \log n = o(n^{1/6}).
\end{equation}
Thus the lemma follows by Markov's inequality.
\end{proof}
\begin{corollary}
\label{cor:nobigxs}
 The probability that $D(n,1/n)$ contains a component of excess at least $n^{1/6}$ and size at most $n^{1/3} \log\log n $ is $o_n(1)$.
\end{corollary}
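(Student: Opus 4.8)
\textit{Proof proposal.} The plan is to reduce the statement to Lemma~\ref{lem:nolongcyc} by showing that a strongly connected digraph of large excess is forced to contain many cycles, all of them short because the whole component is small. The key combinatorial input I would isolate is the following: every strongly connected digraph $D$ on $m$ vertices with excess $k \geq 1$ contains at least $k+1$ pairwise distinct directed cycles (and trivially each has length at most $m$). Granting this, the corollary is immediate: if $D(n,1/n)$ contains a component $\cC$ of excess at least $n^{1/6}$ with $|\cC| \leq n^{1/3}\log\log n$, then $\cC$, and hence $D(n,1/n)$, contains more than $n^{1/6}$ distinct directed cycles, each of length at most $n^{1/3}\log\log n$. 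That is precisely the event shown to have probability $o_n(1)$ in Lemma~\ref{lem:nolongcyc}, so the event considered in the corollary has probability $o_n(1)$ as well.

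To prove the combinatorial claim I would use the ear decomposition $E_0, E_1, \dots, E_k$ of $D$ furnished by the ear-decomposition characterisation in Section~\ref{sec:enumeration}, together with the observation that each initial segment $D_i := \bigcup_{j \leq i} E_j$ is itself strongly connected (it carries an ear decomposition with $i+1$ ears). Put $\gamma_0 := E_0$, which is a directed cycle. For $1 \leq i \leq k$, the ear $E_i$ runs from $u_i$ to $v_i$ with $u_i, v_i \in V(D_{i-1})$; choose a shortest directed $v_i$–$u_i$ path $P_i$ inside the strongly connected digraph $D_{i-1}$ and set $\gamma_i := E_i \cup P_i$. Since the internal vertices of $E_i$ avoid $D_{i-1}$, $E_i$ and $P_i$ meet only at $u_i, v_i$, so $\gamma_i$ is a genuine (simple) directed cycle of $D$. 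These $k+1$ cycles are pairwise distinct: $\gamma_0, \dots, \gamma_{i-1} \subseteq D_{i-1}$, whereas $\gamma_i$ contains an edge of $E_i$ not present in $D_{i-1}$ — this is clear if $E_i$ has an internal vertex, and if $E_i$ is a single edge $u_i \to v_i$ that already lies in $D_{i-1}$, then deleting $E_i$ from the sequence still yields an ear decomposition of $D$, now with only $k$ ears, contradicting the characterisation (which forces $k+1$ ears for excess $k$). Hence $D$ contains at least $k+1$ distinct directed cycles, as claimed.

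The only genuinely delicate point is this distinctness argument (specifically the "new edge" observation for length-one ears); everything else is bookkeeping, and the probabilistic content is entirely outsourced to Lemma~\ref{lem:nolongcyc}. It is worth noting that Lemma~\ref{lem:nogc} is not needed here: the hypothesis $|\cC| \leq n^{1/3}\log\log n$ is used exactly so that all the cycles produced are short enough to be counted by Lemma~\ref{lem:nolongcyc}; the complementary regime of very large components will be handled separately.
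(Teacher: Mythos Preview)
Your proposal is correct and follows essentially the same approach as the paper: reduce to Lemma~\ref{lem:nolongcyc} via the combinatorial fact that a strongly connected digraph of excess $k$ contains at least $k+1$ cycles, proved through the ear decomposition. Your treatment is in fact more careful than the paper's, which asserts the ``each ear adds a new cycle'' step without explicitly verifying distinctness or handling the length-one ear case you singled out.
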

\begin{proof}
 If $D$ is any strongly connected digraph with $m$ vertices and excess $k$, then note that it must have at least $k+1$ cycles of length at most $m$.
 This can be seen by considering the ear decomposition of $D$.
 The first ear must be a cycle, and each subsequent ear adds a path which must be contained in a cycle as $D$ is strongly connected.
 So as we build the ear decomposition, each additional ear adds at least one cycle.
 As any ear decomposition of a strongly connected digraph of excess $k$ has $k+1$ ears, then $D$ must have at least $k+1$ cycles.
 
 Thus, if $D$ has $k$ cycles, it must have excess at most $k-1$.
 So applying Lemma~\ref{lem:nolongcyc} completes the proof.
\end{proof}
Finally, we prove the main theorem of this section.
\begin{proof}[Proof of Theorem~\ref{thm:upperbound}]
 Let $\cC_1$ be the largest strongly connected component of $D(n,1/n)$ and $L_1 =|\cC_1|$.
 We want to compute $\bP(L_1 \geq A n^{1/3})$.
 Define the following three events,
\begin{align*}
\cE_1 & = \{ L_1 \geq A n^{1/3} \} \\
\cE_2 & = \{ A n^{1/3} \leq L_1 \leq n^{1/3} \log\log(n) \} \\
\cE_3 & = \{ L_1 \geq n^{1/3} \log\log(n) \}
\end{align*}
Clearly, $\cE_1 \subseteq \cE_2 \cup \cE_3$ and by Lemma~\ref{lem:nogc}, $\bP(\cE_3) = o_n(1)$.
If $\cF$ is the event that $\cC_1$ has excess at least $n^{1/6}$ then by Corollary~\ref{cor:nobigxs}, $\bP(\cE_2 \cap \cF) = o_n(1)$.
All that remains is to give a bound on $\bP(\cE_2 \cap \cF^c)$.
To this end let $N(A)$ be random variable which counts the number of strongly connected components of $D(n,1/n)$ which have size between $An^{1/3}$ and $n^{1/3}\log\log n$ and excess bounded above by $n^{1/6}$.
By Markov's inequality, we may deduce that $\bP(\cE_2 \cap \cF^c) \leq \bE(N(A))$.
Computing the expectation of $N(A)$,
\begin{equation}
 \label{eq:eNA1}
 \bE(N(A)) = \sum_{m=An^{1/3}}^{n^{1/3}\log^2(n)} \sum_{k=0}^{n^{1/6}} \binom{n}{m} p^{m+k} Y(m,k) \bP(Y_{\tau_1}=0|X_0=m).
\end{equation}
In Lemma~\ref{lem:sccpbty} we showed that $\bP(Y_{\tau_1}=0|X_0=m) \leq \beta e^{-(1+\gamma)m^{3/2} n^{-1/2} +O(m^2 n^{-1})}$. 
Also, using Lemma~\ref{sboundynk} we can check that
\begin{equation}
 \sum_{k=0}^{n^{1/6}} Y(m,k) p^k \leq (m-1)! + C(m-1)!(m^3 p)^{1/2} \sinh((m^3p)^{1/2}), \label{eq:ymkp}
\end{equation}
where the first term on the right hand side of~(\ref{eq:ymkp}) comes from the directed cycles and $C$ is the same constant as in Lemma~\ref{sboundynk}.
As $\sinh(x) \leq e^x$ we can bound~(\ref{eq:ymkp}) by
\begin{align*}
 \sum_{k=0}^{n^{1/6}} Y(m,k) p^k & \leq (m-1)! (1+C m^{3/2}n^{-1/2} e^{m^{3/2}n^{-1/2}}) \\
 & \leq 2(m-1)! C m^{3/2}n^{-1/2} e^{m^{3/2}n^{-1/2}}
\end{align*}
Combining these bounds and using $\binom{n}{m} \leq n^m/m!$ we deduce
\begin{align}
\nonumber \bE(N(A)) & \leq \sum_{m=An^{1/3}}^{n^{1/3}\log^2(n)} \bigg(\frac{(np)^m}{m!}\bigg)\bigg(2(m-1)! C m^{3/2}n^{-1/2} e^{m^{3/2}n^{-1/2}}\bigg)\bigg(\beta e^{-(1+\gamma)m^{3/2} n^{-1/2} +O(m^2 n^{-1})}\bigg) \\
\nonumber & = \sum_{m=An^{1/3}}^{n^{1/3}\log^2(n)} \frac{2 \beta C m^{1/2}}{n^{1/2}} e^{-\gamma m^{3/2}n^{-1/2} + O(m^2n^{-1})} \\
& \leq \int_{m=An^{1/3}}^{n^{1/3}\log^2(n)+1} \frac{2 \beta C m^{1/2}}{n^{1/2}} e^{-\frac{\gamma}{2} m^{3/2}n^{-1/2}} dm \label{eq:integral1}
\end{align}
where~(\ref{eq:integral1}) holds for all sufficiently large $n$.
Now making the substitution $x = m n^{-1/3}$ we can remove the dependence of~(\ref{eq:integral1}) on both $m$ and $n$ so that
\begin{align}
\nonumber \bE(N(A))& \leq 2 \beta C \int_A^{\log^2(n) + n^{-1/3}} x^{1/2} e^{-\frac{\gamma}{2} x^{3/2}} dx \\
& \leq 2 \beta C \int_A^{\infty} x^{1/2} e^{-\frac{\gamma}{2} x^{3/2}} dx \nonumber\\
& = \frac{8 \beta C}{3 \gamma} \int_{\frac{\gamma A^{3/2}}{2}}^\infty e^{-t}dt = \frac{8 \beta C}{3 \gamma} e^{-\frac{\gamma A^{3/2}}{2}} \label{eq:magic}
\end{align}
So, by Markov's inequality $\bP(\cE_2 \cap \cF^c) \leq \zeta e^{-\eta A^{3/2}}$ where $\zeta$ and $\eta$ are the corresponding constants found in~(\ref{eq:magic}). So, 
$$
\bP(L_1 \geq An^{1/3}) \leq \bP(\cE_2 \cap \cF^c) +\bP(\cE_2 \cap \cF)+\bP(\cE_3) = \zeta e^{-\eta A^{3/2}} + o_n(1).
$$
Calculating $\zeta$ and $\gamma$ using the values for $C, \beta$ and $\gamma$ in Lemmas~\ref{sboundynk} and~\ref{lem:sccpbty} yields $\zeta < 2 \times 10^7$ and $\eta > 0.03$.
\end{proof}

\section{Adaptations for the Critical Window}\label{sec:adaptations}
In this section we sketch the adaptations one must make to the proofs of Theorems~\ref{lbound} and~\ref{thm:upperbound} such that they hold in the whole critical window, $p = n^{-1} + \lambda n^{-4/3}$ where $\lambda \in \bR$.
\subsection{Lower Bound}
For Theorem~\ref{lbound}, the adaptation is rather simple.
We will still apply Janson's inequality and so we only need to recompute $\mu$ and $\Delta$.
Furthermore, the only difference in these calculations comes from replacing the term $n^{-m-k}$ by $p^{m+k}$, and in fact the $p^k$ in this turns out to make negligible changes.
In this light, Lemma~\ref{lem:mu} changes to
\begin{lemma}
 \begin{equation*}
 \label{lem:admu}
\bE(X) \geq 
 \begin{cases}
  -e^{\frac{\lambda \delta}{2}}\log(\delta)/2 & \text{if }\lambda \geq 0 \\
  -e^{2 \delta^{1/2} \lambda} \log(\delta)/2 & \text{otherwise}
 \end{cases}
\end{equation*}
\end{lemma}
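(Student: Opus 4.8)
The plan is to repeat the proof of Lemma~\ref{lem:mu} line by line, the only change being that the factor $n^{-m}$ is replaced by $p^m$, which introduces a multiplicative factor $(np)^m = (1+\lambda n^{-1/3})^m$. As before set $a = \delta n^{1/3}$ and $b = \delta^{1/2} n^{1/3}$, so that $X = \sum_{m=a}^{b} X(m)$ with $X(m)$ the number of directed $m$-cycles, and
\begin{equation*}
\bE(X(m)) = \binom{n}{m}\frac{m!}{m}\,p^m = \frac{(1+\lambda n^{-1/3})^m}{m}\prod_{j=0}^{m-1}\left(1-\frac{j}{n}\right).
\end{equation*}

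The main step is a lower bound on the right-hand side that is uniform over $a \le m \le b$. Since $m \le b$, one has $\prod_{j=0}^{m-1}(1-j/n) \ge 1 - \sum_{j<m}(j/n + j^2/n^2) = 1 - O(n^{-1/3})$, so it remains to bound $(1+\lambda n^{-1/3})^m$ from below, and here I would use the monotonicity of $m \mapsto (1+\lambda n^{-1/3})^m$ together with the elementary estimates $\log(1+x) \ge x - x^2/2$ for $x \ge 0$ and $\log(1+x) \ge x - x^2$ for $|x| \le 1/2$. If $\lambda \ge 0$ the map increases in $m$, so $(1+\lambda n^{-1/3})^m \ge (1+\lambda n^{-1/3})^a$, and taking logarithms gives an exponent at least $\lambda\delta - \tfrac{\lambda^2\delta}{2}n^{-1/3}$; if $\lambda < 0$ the map decreases in $m$, so $(1+\lambda n^{-1/3})^m \ge (1+\lambda n^{-1/3})^b$ with log-exponent at least $\lambda\delta^{1/2} - \lambda^2\delta^{1/2}n^{-1/3}$ (valid once $|\lambda| n^{-1/3} \le \tfrac{1}{2}$). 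In both cases the accumulated error is $O(n^{-1/3})$, the implied constant depending only on $\lambda$ and $\delta$, and it is absorbed once $n$ is large enough: the point of stating the bound with $\lambda\delta/2$ rather than $\lambda\delta$ (and $2\delta^{1/2}\lambda$ rather than $\delta^{1/2}\lambda$) is precisely to leave this room. Hence $\bE(X(m)) \ge \tfrac{1}{m}e^{\lambda\delta/2}$ when $\lambda \ge 0$ and $\bE(X(m)) \ge \tfrac{1}{m}e^{2\delta^{1/2}\lambda}$ when $\lambda < 0$; for $\lambda = 0$ the statement is just Lemma~\ref{lem:mu}.

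To finish I would sum over $m$ exactly as in Lemma~\ref{lem:mu}, using $\sum_{m=a}^{b}\tfrac{1}{m} \ge \int_a^b \tfrac{dx}{x} = \tfrac{1}{2}\log(1/\delta) = -\tfrac{1}{2}\log\delta$, to obtain $\bE(X) \ge e^{\lambda\delta/2}\cdot(-\tfrac{1}{2}\log\delta)$ for $\lambda \ge 0$ and $\bE(X) \ge e^{2\delta^{1/2}\lambda}\cdot(-\tfrac{1}{2}\log\delta)$ for $\lambda < 0$, as claimed. The only delicate point is the bookkeeping in the middle paragraph; in particular one must evaluate $(1+\lambda n^{-1/3})^m$ at the endpoint of $[a,b]$ that actually minimises it, which is $m = a$ when $\lambda \ge 0$ but $m = b = \delta^{1/2} n^{1/3}$ when $\lambda < 0$ — this is the source of the $\delta^{1/2}$ in the negative-$\lambda$ exponent.
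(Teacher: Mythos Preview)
Your proposal is correct and follows essentially the same route as the paper: reduce to Lemma~\ref{lem:mu}, replace $n^{-m}$ by $p^m$, bound the extra factor $(1+\lambda n^{-1/3})^m$ from below at the minimising endpoint of $[a,b]$, and sum as before. The only cosmetic difference is the elementary inequality used: the paper invokes $1+x \ge e^{x/2}$ for $0 \le x \le 2$ and $1+x \ge e^{2x}$ for $-\tfrac12 \le x \le 0$ directly, which produces the constants $\lambda\delta/2$ and $2\delta^{1/2}\lambda$ without any $O(n^{-1/3})$ bookkeeping, whereas you obtain the sharper exponents $\lambda\delta$ and $\lambda\delta^{1/2}$ via Taylor bounds on $\log(1+x)$ and then spend the slack to absorb the $O(n^{-1/3})$ errors for $n$ large.
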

where the only difference in the proof is to bound $(1+\lambda n^{-1/3})^m$ by its lowest value depending on whether $\lambda \geq 0$ or $\lambda <0$.
We bound this via
\begin{equation*}
 1+x \geq
 \begin{cases}
  e^{\frac{x}{2}} & \text{if } 0 \leq x \leq 2 \\
  e^{2x} & \text{if } -\frac{1}{2} \leq x \leq 0
 \end{cases}
\end{equation*}
Furthermore, Lemma~\ref{lem:Delta} changes to
\begin{lemma}
\label{lem:adDelta}
For all sufficiently large $n$ and small enough $\delta$,
 \begin{equation*}
\Delta \leq 
 \begin{cases}
 e^{2\delta^{1/2}\lambda} \log(2) & \text{if } \lambda \geq 0 \\
 e^{\delta \lambda} \log(2) & \text{otherwise}
 \end{cases}
\end{equation*}
\end{lemma}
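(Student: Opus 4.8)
The plan is to retrace the proof of Lemma~\ref{lem:Delta} and isolate the single place where $p=n^{-1}$ entered, namely the identity $\bE(Z(m,k)) = \binom{n}{m}p^{m+k}Y(m,k)$ used just after~(\ref{ex2bound}). Writing $p = n^{-1}(1+\lambda n^{-1/3})$ gives $p^{m+k} = n^{-(m+k)}(1+\lambda n^{-1/3})^{m+k}$, so the bound from the proof of Lemma~\ref{lem:Delta} becomes
\begin{equation*}
2\Delta \leq \sum_{m=a}^{2b}\sum_{k=1}^{\infty} 27^k\,(1+\lambda n^{-1/3})^{m+k}\,\binom{n}{m}\,n^{-(m+k)}\,Y(m,k),
\end{equation*}
i.e. exactly the right-hand side of~(\ref{ex2bound}) and of the subsequent calculation, but carrying the extra weight $(1+\lambda n^{-1/3})^{m+k}$. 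Everything reduces to showing this weight costs no more than the factor $e^{2\lambda\delta^{1/2}}$ (resp. $e^{\lambda\delta}$) claimed in the statement.

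First I would pull out the factor $(1+\lambda n^{-1/3})^m$. On the range $a\le m\le 2b$ it is monotone in $m$ — increasing when $\lambda\ge 0$, decreasing when $\lambda<0$ — so it is bounded by its value at $m=2b=2\delta^{1/2}n^{1/3}$ in the first case and at $m=a=\delta n^{1/3}$ in the second. Since $1+x\le e^x$ (applicable once $n$ is large enough that $|\lambda|n^{-1/3}\le 1$ so the base is positive), these values are at most $e^{2\lambda\delta^{1/2}}$ and $e^{\lambda\delta}$ respectively, which are precisely the constants in the lemma. It then remains to absorb the residual factor $(1+\lambda n^{-1/3})^k$ into the remaining sum. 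When $\lambda<0$ this factor is at most $1$ and is simply discarded. When $\lambda\ge 0$ I would fold it into the base $27^k$: as $1+\lambda n^{-1/3}\to 1$, for $n$ large enough $27(1+\lambda n^{-1/3})\le 28$, and rerunning the computation that produced~(\ref{eq:Delta}) verbatim with $27$ replaced by $28$ throughout — the split into $k\le m$ and $k>m$ and the two bounds on $Y(m,k)$ from Lemma~\ref{uboundynk} are unchanged — yields
\begin{equation*}
\sum_{m=a}^{2b}\sum_{k=1}^{\infty} 28^k\,\binom{n}{m}\,n^{-(m+k)}\,Y(m,k)\ \leq\ \frac{\log(4/\delta)}{2}\big(e^{c_1\delta^{3/2}}-1+c_2\delta^2\big)
\end{equation*}
for absolute constants $c_1,c_2$. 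Because the right-hand side is increasing in $\delta$ and tends to $0$ as $\delta\to 0$, it is at most $2\log 2$ for all sufficiently small $\delta$; combined with $(1+\lambda n^{-1/3})^m\le e^{2\lambda\delta^{1/2}}$ this gives $2\Delta\le e^{2\lambda\delta^{1/2}}\cdot 2\log 2$, hence $\Delta\le e^{2\lambda\delta^{1/2}}\log 2$, and symmetrically $\Delta\le e^{\lambda\delta}\log 2$ when $\lambda<0$.

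The main obstacle is not any new estimate but the bookkeeping around the two quantifiers: since the bound~(\ref{eq:Delta}) for the case $p=n^{-1}$ is essentially tight at $\delta=1/800$, one must first fix $\delta$ small enough to create genuine slack below $2\log 2$, and only then take $n$ large enough that the replacements $1+\lambda n^{-1/3}\le 28/27$ and $1+x\le e^x$ are valid and that the $(1+o(1))$ perturbations absorbed into $c_1,c_2$ do not consume that slack. No enumeration input beyond Lemmas~\ref{uboundynk} and~\ref{zqk}, already used in the $p=n^{-1}$ case, is required.
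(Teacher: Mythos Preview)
Your proposal is correct and follows essentially the same route as the paper: the only change from Lemma~\ref{lem:Delta} is to bound the extra factor $(1+\lambda n^{-1/3})^{m}$ by its maximum over $a\le m\le 2b$ via $1+x\le e^x$, which yields exactly $e^{2\lambda\delta^{1/2}}$ for $\lambda\ge 0$ and $e^{\lambda\delta}$ for $\lambda<0$. Your explicit handling of the residual $(1+\lambda n^{-1/3})^{k}$ by absorbing it into $27^k$ (or discarding it when $\lambda<0$) is slightly more careful than the paper's sketch, which simply notes that only the $(np)^m$ term changes, but the argument is the same.
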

The proof again is almost identical with the only change being to approximate the $(np)^m$ term.
This time we seek an upper bound so use the approximation $1+x \leq e^x$ which is valid for any $x$.
We still need to split depending upon the sign of $\lambda$ as for the above constants we upper bound $(np)^m$ by its largest possible value over the range $\delta n \leq m \leq 2 \delta^{1/2} n$.
Combining Lemmas~\ref{lem:admu} and~\ref{lem:adDelta} with the relevant constraints on $\delta$ in relation to $\lambda$ yields Theorem~\ref{mainlb}.
\subsection{Upper Bound}
There is no significant (i.e. of order $e^{\lambda A}$) improvement which can be made with our current method of proof when $\lambda <0$.
This is because the gains we make computing the expectation in the proof of Theorem~\ref{thm:upperbound} are cancelled out by losses in the branching process considerations of Lemma~\ref{lem:exprocessbound}.

When $\lambda>0$ we cannot simply use our bound for $p=n^{-1}$ and thus an adaptation is necessary.
Note that by monotonicity in $p$, the results of Lemmas~\ref{lem:exprocessbound} and~\ref{lem:sccpbty} remain true for $p = n^{-1} + \lambda n^{-4/3}$ with $\lambda>0$.
The next adaptation which must be made is in equation~(\ref{eq:cyclesbound1}) where now, the expectation becomes
\begin{equation*}
 \bE(\cC) \leq \sum_{k=1}^{n^{1/3} \log^{(2)} n} \frac{e^{k \lambda n^{-1/3}}}{k} \leq \sum_{k=1}^{n^{1/3} \log^{(2)} n} \frac{(\log n)^\lambda}{k} \leq 2 (\log n)^{\lambda+1} = o(n^{1/6})
\end{equation*}
Thus allowing us to deduce the result of Corollary~\ref{cor:nobigxs} as before.
Finally all that remains is to conclude the proof of Theorem~\ref{mainub}.
Ignoring lower order terms, the only difference to the proof compared to that of Theorem~\ref{thm:upperbound} is in the computation of $\bE(N(A))$ where we must change the term $(np)^m$.
Thus the integral in~(\ref{eq:integral1}) becomes
\begin{equation}
 \int_{m=An^{1/3}}^{n^{1/3}\log^{(2)} n+1} \frac{2 \beta C m^{1/2}}{n^{1/2}} e^{-\frac{\gamma}{2} m^{3/2}n^{-1/2}+ \lambda mn^{-1/3}} dm \label{eq:ad:integral1}
\end{equation}
This is much more complex than before due to the extra term in the exponent.
However we are still able to give a bound after making the obvious substitution $t = \frac{\gamma}{2} m^{3/2}n^{-1/2}- \lambda mn^{-1/3}$, we obtain
\begin{align}
\nonumber \bE(N(A)) & \leq \frac{8 \beta C}{3 \gamma} \int_{\frac{\gamma}{2}A^{3/2} - \lambda A}^{\infty} \frac{m^{1/2}n^{-1/2}}{m^{1/2}n^{-1/2} - \frac{4 \lambda n^{-1/3}}{3 \gamma}} e^{-t} dt \\
& \leq \frac{10 \beta C}{3 \gamma} \int_{\frac{\gamma}{2}A^{3/2} - \lambda A}^{\infty} e^{-t} dt  = \frac{10 \beta C}{3 \gamma} e^{-\frac{\gamma}{2} A^{3/2} +\lambda A} \label{eq:ad:integral2}
\end{align}
which is of the claimed form.
Note the second inequality holds for $A$ sufficiently large compared to $\lambda$.

\section{Concluding Remarks}
\label{sec:open}
In this paper we have proven that inside the critical window, $p=n^{-1}+\lambda n^{-4/3}$, the largest component of $D(n,p)$ has size $\Theta_p(n^{1/3})$. Furthermore, we have given bounds on the tail probabilities of the distribution of the size of the largest component.
Combining this result with previous work of Karp~\cite{karptc} and \L{}uczak~\cite{luczak1990phase} allows us to deduce that $D(n,p)$ exhibits a ``double-jump'' phenomenon at the point $p=n^{-1}$.
However, there are still a large number of open questions regarding the giant component in $D(n,p)$.
Perhaps the most obvious such question is to ask for an exact distribution for the size of the giant component.
\begin{question}
 What is the limiting distribution of $n^{-1/3}|\cC_1(D(n,p))|$ when $p = n^{-1} + \lambda n^{-4/3}$?
\end{question}
Given the strong connection between $G(n,p)$ and $D(n,p)$, it seems likely that the limit distributions, $X^\lambda = n^{-2/3}|\cC_1(G(n,p))|$ and $Y^\lambda = n^{-1/3}|\cC_1(D(n,p))|$ (where $p=n^{-1} + \lambda n^{-4/3}$) are closely related.
For larger $p$, previous work~\cite{karptc, luczak1994structure} has found that the size of the giant strongly connected component in $D(n,p)$ is related to the size of the square of the giant component in $G(n,p)$. That is, if $|\cC_1(G(n,p)| \sim \alpha(n) n$, then $|\cC_1(D(n,p)| \sim \alpha(n)^2 n$. 
Note that the result found in Theorem~\ref{mainub} is consistent with this pattern as here we have an exponent of order $A^{3/2}$ while for $G(n,p)$ a similar result is true with exponent $A^3$ implying that the probability we find a component of size $Bn^{2/3}$ in $G(n,p)$ is similar to the probability of finding a component of size $B^2 n^{1/3}$ in $D(n,p)$ (assuming both bounds are close to tight).
As such, we make the following conjecture to explain this pattern.
\begin{conjecture}
 If $X^\lambda$ and $Y^\lambda$ are the distributions defined above and $X_1^\lambda, X_2^\lambda$ are independent copies of $X^\lambda$ then, $Y^\lambda = X_1^\lambda X_2^\lambda$.
 \label{conj:sclimit}
\end{conjecture}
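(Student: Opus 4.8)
Since the paper leaves this as a conjecture, I will only outline a plausible route. The basic observation is that for any vertex $v$ the strong component of $v$ equals $\cC^+(v) \cap \cC^-(v)$, where $\cC^+(v)$ is the set of vertices reachable from $v$ and $\cC^-(v)$ the set of vertices that can reach $v$. The plan is to run the forward exploration process of Lemma~\ref{lem:exprocessbound} together with its backward analogue and to argue that the two are asymptotically independent. Indeed, the forward exploration only reveals the status of the ordered pairs in $\cC^+(v) \times [n]$, while the backward exploration only reveals those in $[n] \times \cC^-(v)$; these two families of pairs meet exactly in $\cC^+(v) \times \cC^-(v)$, which on the relevant event that both reachability sets have size $\Theta(n^{2/3})$ (an event of the right probability by Theorems~\ref{mainlb} and~\ref{mainub}, applied to out- and in-components) has size $O(n^{4/3}) = o(n^2)$. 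Hence the two explorations can be coupled, with $o(1)$ error, to a pair of independent copies of themselves.

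The next step is to identify the marginal laws. The process recording the size of the active set in the forward exploration of $D(n,p)$ has the same distribution as the corresponding exploration of $G(n,p)$ at the same value of $p$, since in both cases the increments are $\Bin(n-O(t),p)-1$. Consequently the point process of rescaled forward reachability-set sizes converges to the Brownian limit of Aldous~\cite{aldous1997brownian}, and in particular the largest forward reachability set $\cC^+_1$ satisfies $n^{-2/3}|\cC^+_1| \to X^\lambda$, and likewise the largest backward reachability set $\cC^-_1$. (Passing from an identity of exploration processes to a statement about genuine reachability sets needs a little care, as distinct reachability sets of a digraph need not be disjoint, but this should be routine.)

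It then remains to show $n^{-1/3}|\cC_1| \to X_1^\lambda X_2^\lambda$ for independent $X_1^\lambda, X_2^\lambda$. One would argue that, up to lower-order corrections, $\cC_1 = \cC^+_1 \cap \cC^-_1$, and that conditionally on $\cC^+_1 = S$ with $|S| = s$ and on all forward edges out of $S$, the set $\cC^-_1$ — built from essentially fresh backward randomness by the coupling above — is spread uniformly relative to $S$, so that
\[
|\cC_1| = |\cC^+_1 \cap \cC^-_1| = \bigl(1+o_p(1)\bigr)\,\frac{|\cC^+_1|\,|\cC^-_1|}{n}.
\]
Combined with the marginal convergence and with the asymptotic independence of $|\cC^+_1|$ and $|\cC^-_1|$ coming from that same coupling, this yields $n^{-1/3}|\cC_1| \to X_1^\lambda X_2^\lambda$.

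The hard part is the alignment problem buried in the last paragraph. The coupling gives asymptotic independence of the two explorations started from one fixed vertex, but $\cC^+_1$ and $\cC^-_1$ are extracted from the same digraph, so one must upgrade this to control of the joint law of the forward and backward component point processes. Concretely one has to show: (i) $\cC^+_1$ and $\cC^-_1$ genuinely overlap — rather than sitting in \emph{misaligned} parts of the linear-sized giant weak component — with the appropriate probability; (ii) no other pair $(\cC^+_i, \cC^-_j)$ of large but non-largest reachability sets yields an intersection of comparable order; and (iii) the fluctuations of $|\cC^+_1|$ and $|\cC^-_1|$ decouple in the limit. A natural way to address all three at once is to restrict to the giant weak component, contract its bare paths to obtain the critical kernel, and study the uniformly random orientation of that kernel directly, feeding the Brownian description of the kernel from~\cite{aldous1997brownian} into the computation of the joint limit.
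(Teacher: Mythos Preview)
The statement you are addressing is a \emph{conjecture} in the paper, not a theorem: it appears in Section~\ref{sec:open} (Concluding Remarks) and the paper offers no proof or proof sketch for it. So there is nothing in the paper to compare your proposal against, and you correctly flag this at the outset.

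As a heuristic outline your proposal is reasonable and captures the intuition the paper alludes to (the ``square of the giant'' analogy). But it is worth being explicit that several of the steps you label as routine are not. First, the identification of the forward exploration of $D(n,p)$ with that of $G(n,p)$ gives you the law of a \emph{single} out-component, but the out-components of distinct vertices are not disjoint, so there is no partition into ``forward components'' whose ordered sizes converge to the Aldous excursion lengths; extracting a well-defined $\cC_1^+$ with $n^{-2/3}|\cC_1^+|\to X^\lambda$ already requires work beyond the $G(n,p)$ theory. Second, even granting marginal convergence and asymptotic independence of $|\cC_1^+|$ and $|\cC_1^-|$, the step $|\cC_1| = (1+o_p(1))\,|\cC_1^+|\,|\cC_1^-|/n$ needs that $\cC_1^-$ is approximately uniformly spread over $\cC_1^+$; in the critical window both sets are $\Theta(n^{2/3})$ inside a weak giant of size $\Theta(n^{2/3})$, so ``uniform over $[n]$'' is the wrong normalisation and a law-of-large-numbers argument does not apply directly. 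Third, your own ``alignment problem'' is the real crux: establishing that the maximiser of $|\cC^+(v)\cap\cC^-(v)|$ picks out the pair $(\cC_1^+,\cC_1^-)$ and not some other combination is genuinely delicate. Your suggestion to pass to the kernel of the weak giant and study a random orientation there is a sensible line of attack, but as written the proposal remains a programme rather than a proof.
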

Note that recently, Goldschmidt and Stephenson~\cite{goldschmidt2019scaling} found a scaling limit for the sizes of all the strong components in the critical random digraph.
This is analogous to the result of Aldous~\cite{aldous1997brownian} in $G(n,p)$.
This result, while giving a limit object which one can work with seems to be difficult to extract any information on the exact distribution of $n^{-1/3}|\cC_1(D(n,p))|$, and so still leaves conjecture~\ref{conj:sclimit} open.

Finally, we consider the transitive closure of random digraphs.
The \emph{transitive closure} of a digraph $D$ is $cl(D)$ a digraph on the same vertex set as $D$ and such that $uv$ is an edge of $cl(D)$ if and only if there is a directed path from $u$ to $v$ in $D$.
Equivalently, $cl(D)$ is the smallest digraph containing $D$ such that the relation $R$ defined by $uRv$ if and only if $uv$ is an edge is transitive.
Karp~\cite{karptc} gave a linear time algorithm to compute the transitive closure of a digraph from the model $D(n,p)$ provided that $p \leq (1-\eps)n^{-1}$ or $p \geq (1+\eps)n^{-1}$.
For all other $p$ this algorithm runs in time $O(f(n) (n \log n)^{4/3})$ where $f(n)$ is any $\omega(1)$ function.
Now that we know more about the structure of $D(n,p)$ for $p$ close to $n^{-1}$, it may be possible to adapt Karp's algorithm and obtain a better time complexity.
\begin{question}
 Does there exist a linear time algorithm to compute the transitive closure of $D(n,p)$ when $(1-\eps)n^{-1} \leq p \leq (1 + \eps)n^{-1}$?
 \end{question}
 
\section{Acknowledgements}
The author would like to thank Guillem Perarnau for providing helpful feedback on a previous draft of this paper. 

\bibliographystyle{plain}
{\small \bibliography{DNP_bib}}

\end{document}